\definecolor{carminepink}{rgb}{0.92, 0.3, 0.26}
\newtheorem{prop}{Proposition}[section]
\newtheorem{proposition}{Proposition}[section]
\newtheorem{lemma}[prop]{Lemma}
\newtheorem{definition}[prop]{Definition}
\newtheorem{corollary}[prop]{Corollary}
\newtheorem{theorem}[prop]{Theorem}
\newtheorem{remark}[prop]{Remark}
\newtheorem{example}[prop]{Example}
\DeclareSymbolFont{bbold}{U}{bbold}{m}{n}
\DeclareSymbolFontAlphabet{\mathbbold}{bbold}
\definecolor{electricultramarine}{rgb}{0.25, 0.0, 1.0}
\providecommand{\norm}[1]{\left\Vert#1\right\Vert}
\newcommand{\cont}[1]{\stackrel{#1}{\frown}}
\newcommand{\conts}[1]{\stackrel{#1}{\frown}}
\newcommand{\smcont}[1]{\star_{#1}^{#1 -1}}
\newcommand{\converge}{\underset{k \rightarrow +\infty}{\longrightarrow}}
\newcommand{\R}{\mathbb{R}}
\newcommand{\cvlaw}{\overset{\rm law}{\rightarrow}}
\newtheorem*{theorem*}{Theorem}
\newtheorem*{acknow*}{Acknowledgments}
\begin{document}

\author{Solesne Bourguin}
\address{Boston University, Department of Mathematics and Statistics, 111
  Cummington Mall, Boston, MA 02215, USA}
\email{bourguin@math.bu.edu}
\author{Ivan Nourdin}
\address{Universit\'e du Luxembourg, Maison du Nombre, 6 avenue de la Fonte, L-4364 Esch-sur-Alzette, Grand Duchy of Luxembourg}
\email{ivan.nourdin@uni.lu}
\title[Freeness characterizations on free chaos spaces]{Freeness characterizations on free chaos spaces}  
\begin{abstract}
This paper deals with characterizing the freeness and asymptotic
freeness of free multiple integrals with respect to a free Brownian
motion or a free Poisson process. We obtain three characterizations of
freeness, in terms of contraction operators, covariance conditions,
and free Malliavin gradients. We show how these characterizations can
be used in order to obtain limit theorems, transfer principles, and
asymptotic properties of converging sequences.
\end{abstract}
\subjclass[2010]{46L54, 68H07, 60H30}
\keywords{Free probability, Wigner integrals, free Poisson integrals, free Malliavin calculus,
  characterization of freeness, free Fourth Moment Theorems}
\maketitle

\section{Introduction}

A classical result in probability theory asserts that one can decompose any functional of a Brownian motion $W$ as an infinite sum of multiple integrals.
That is, to any square integrable random variable $F$ measurable with
respect to $W$, one can associate a unique sequence of symmetric and
square integrable kernels $\left\{ f_n \colon n \geq 0 \right\}$ such
that 
\begin{equation*}
F=\sum_{n=0}^\infty I_n^W(f_n).
\end{equation*}
The set of all multiple Wiener-It\^o integrals of the form $I_n^W(f)$, the so-called {\sl $n$-th Wiener chaos} of $W$, thus plays a fundamental role in modern stochastic analysis. Analysing its many rigid properties (notably those related to independence and normal approximation) has become a subject in its own right, and has grown into a mature and widely applicable mathematical theory.

Among the most striking results about Wiener chaos are the following two theorems, which will play a central role in the present paper.
The first one characterizes independence of multiple Wiener-It\^o integrals.
\begin{theorem}[{\"U}st{\"u}nel and Zakai \cite{ustunel_independence_1989}, 1989]\label{ustu-zakai}
Let $n,m$ be natural numbers and let $f\in L^2(\R_+^n)$ and $g\in L^2(\R_+^m)$ be symmetric functions. Then $I_n^W(f)$ and $I_m^W(g)$ are independent if and only if, for almost all $x_1,\ldots,x_{n-1},y_1,\ldots,y_{m-1}\in\R_+$, $$\int_0^\infty f(x_1,\ldots,x_{n-1},u)g(y_1,\ldots,y_{m-1},u)du=0.$$
\end{theorem}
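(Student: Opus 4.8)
The plan is to prove the two implications separately: sufficiency of the contraction condition is a soft, functional-analytic statement, whereas necessity rests on a moment identity.

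\textbf{Sufficiency.} The vanishing of $\int_0^\infty f(x_1,\ldots,x_{n-1},u)\,g(y_1,\ldots,y_{m-1},u)\,du$ for a.e. $x,y$ says precisely that the sections $f(x_1,\ldots,x_{n-1},\cdot)$ and $g(y_1,\ldots,y_{m-1},\cdot)$ are orthogonal in $H:=L^2(\R_+)$ for a.e. $x,y$. Let $H_f\subseteq H$ be the smallest closed subspace containing $f(x_1,\ldots,x_{n-1},\cdot)$ for a.e. $x$ (this is well defined: $x\mapsto f(x,\cdot)$ is a Bochner-measurable $H$-valued map of finite $L^2$-norm whose essential range spans a separable closed subspace, and by symmetry of $f$ the choice of distinguished variable is immaterial); define $H_g$ analogously. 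I would first record two structural facts. (i) $f\in H_f^{\widehat{\otimes} n}$: if $e\perp H_f$ then $\langle f,\psi\otimes e\rangle=\int\langle f(x,\cdot),e\rangle\,\psi(x)\,dx=0$ for every $\psi\in L^2(\R_+^{n-1})$, so $f\perp L^2(\R_+^{n-1})\otimes e$, and by symmetry of $f$ the same holds with $e$ in any slot, whence $f\in H_f^{\widehat{\otimes} n}$. (ii) $H_f\perp H_g$: for a.e. $x$ the vector $f(x,\cdot)$ is orthogonal to $g(y,\cdot)$ for a.e. $y$, hence to the closed span $H_g$. Now I invoke the classical fact that orthogonal subspaces of a Gaussian Hilbert space generate independent $\sigma$-algebras: $\{W(h):h\in H_f\}$ and $\{W(h):h\in H_g\}$ form a jointly Gaussian family with vanishing cross-covariances, so the $\sigma$-algebras they generate are independent. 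Since $I_n^W(f)$ with $f\in H_f^{\widehat{\otimes} n}$ is an $L^2$-limit of polynomials in the variables $W(h)$, $h\in H_f$, it is measurable w.r.t. the first, and $I_m^W(g)$ w.r.t. the second; independence of $I_n^W(f)$ and $I_m^W(g)$ follows.

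\textbf{Necessity -- reduction to a covariance identity.} Assume $X:=I_n^W(f)$ and $Y:=I_m^W(g)$ are independent. Since both are centered, $\E[XY]=0$ (so $\langle f,g\rangle=0$ if $n=m$) and $\E[X^2Y^2]=\E[X^2]\E[Y^2]$, hence $\operatorname{Cov}(X^2,Y^2)=0$. Write $h(x_1,\ldots,x_{n-1},y_1,\ldots,y_{m-1})=\int_0^\infty f(x_1,\ldots,x_{n-1},u)g(y_1,\ldots,y_{m-1},u)\,du$ for the kernel in the statement; as $f,g$ are symmetric, $h$ is (up to relabeling of variables) the contraction $f\otimes_1 g$ of one pair of variables. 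The plan is to expand $\operatorname{Cov}(X^2,Y^2)$ via the multiplication formula $I_n^W(f)I_m^W(g)=\sum_{r=0}^{n\wedge m}r!\binom nr\binom mr I_{n+m-2r}^W(f\,\widetilde{\otimes}_r g)$ (the tilde denoting symmetrization in all variables), writing $X^2$ and $Y^2$ as sums of multiple integrals and computing $\E[X^2Y^2]-\E[X^2]\E[Y^2]$ through orthogonality of the chaoses and the isometry formula. The claim to be extracted is that, when $\langle f,g\rangle=0$, the resulting expression is a sum of non-negative terms in which $\|h\|^2$ appears with a strictly positive coefficient -- schematically,
\[
\operatorname{Cov}(X^2,Y^2)=\E\big[\langle DX,DY\rangle_{L^2(\R_+)}^2\big]+\sum_{r\ge 1}c_{n,m,r}\,\|f\otimes_r g\|^2,\qquad c_{n,m,1}>0,
\]
with $D$ the Malliavin derivative and $\otimes_r$ the un-symmetrized $r$-fold contraction. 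Granting this, the vanishing of $\operatorname{Cov}(X^2,Y^2)$ forces $\|h\|^2=\|f\otimes_1 g\|^2=0$, which is the asserted condition.

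\textbf{The main obstacle.} The heart of the proof is exactly the positivity above, and it is genuinely a grouping phenomenon, not a term-by-term one: in the chaos expansion the individual scalars $\langle f\,\widetilde{\otimes}_r f,\ g\,\widetilde{\otimes}_s g\rangle$ need not be non-negative, and one cannot get away with symmetrized contractions only, because $f\,\widetilde{\otimes}_1 g$ can vanish while $h=f\otimes_1 g\neq 0$ and $X,Y$ are dependent. For instance, with $\{e_i\}$ orthonormal in $H$ and $f=e_1^{\otimes 2}-e_2^{\otimes 2}$, $g=e_1\otimes e_2+e_2\otimes e_1$, one gets $X=W(e_1)^2-W(e_2)^2$ and $Y=2W(e_1)W(e_2)$, for which $\langle f,g\rangle=0$ and $f\,\widetilde{\otimes}_1 g=0$ (so even $\langle DX,DY\rangle_H=0$ a.s.), yet $\operatorname{Cov}(X^2,Y^2)=32\neq 0$. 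In the base case $n=m=2$ the bookkeeping does yield the clean identity $\operatorname{Cov}(X^2,Y^2)=16\,\|f\otimes_1 g\|^2+32\,\|f\,\widetilde{\otimes}_1 g\|^2$ (using $\langle f,g\rangle=0$), and the general case is the same computation pushed through the combinatorics of the multiplication formula; tracking those combinatorics so as to exhibit the non-negative regrouping is where the real work lies. A secondary, routine point is the measure-theoretic hygiene in the sufficiency part (``smallest closed subspace containing a.e. section'', ``orthogonal to a.e. element implies orthogonal to the closed span''), which is standard Bochner/Pettis measurability together with separability.
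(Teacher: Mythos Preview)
The paper does not prove this statement: Theorem~\ref{ustu-zakai} (restated as Theorem~\ref{ustuzakaitheorem}) is quoted from \"Ust\"unel and Zakai's 1989 paper as classical background, with no proof given here. The closest thing the paper proves is the free analogue, Theorem~\ref{freeustuzakaiwignercase}, so any comparison must be against that argument or against the original reference.

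Your sufficiency argument is correct and self-contained; it is also the standard one (orthogonal subspaces of the first chaos generate independent $\sigma$-fields, and $I_n^W(f)$ is measurable with respect to the $\sigma$-field generated by $\{W(h):h\in H_f\}$).

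For necessity you take the right route---independence forces $\operatorname{Cov}(X^2,Y^2)=0$, and one expands this via the product formula---but you explicitly stop short of the one genuinely nontrivial step, namely exhibiting the expansion as a sum of non-negative terms in which $\|f\otimes_1 g\|^2$ appears with a strictly positive coefficient. You diagnose the difficulty well (symmetrized contractions can vanish while unsymmetrized ones do not; your $n=m=2$ example is exactly the right cautionary case), but ``the general case is the same computation pushed through the combinatorics'' is a promise, not a proof. This regrouping is precisely the content of the \"Ust\"unel--Zakai argument, and the covariance formulation you are aiming at is what the paper attributes to Rosi\'nski and Samorodnitsky \cite{rosinski_product_1999}.

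It is worth contrasting with how the paper handles the free analogue. There the product formula \eqref{productformula} has no symmetrization, and the key step is the one-line Fubini identity
\[
\langle f\cont{n-p}f,\ g\cont{m-p}g\rangle_{L^2(\R_+^{2p})}=\|f\cont{p}g\|_{L^2(\R_+^{n+m-2p})}^2,
\]
which turns the covariance of the squares directly into $\sum_{p=1}^{n\wedge m}\|f\cont{p}g\|^2$ (equation \eqref{expressionofcovarianceofthesquares}), a manifestly non-negative sum. The classical Wiener case does not collapse this cleanly because the chaos projections of $X^2$ and $Y^2$ involve symmetrized contractions and combinatorial weights $r!\binom{n}{r}\binom{m}{r}$; unwinding those to reach unsymmetrized $\|f\otimes_r g\|^2$ terms is exactly the work you have deferred. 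So: correct strategy, correct identification of the obstacle, but the proof as written is an outline with the hard lemma left open.
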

The second result is nowadays one of the most central tools of analysis on Wiener chaos, as it represents a drastic simplification with respect to the method of moments for the normal approximation of sequences of multiple Wiener-It\^o integrals. 

\begin{theorem}[Nualart and Peccati \cite{nualart_central_2005}, 2005]\label{nunu-gio}
A unit-variance sequence in a Wiener chaos of fixed order converges in law to the standard Gaussian distribution if and only if the corresponding sequence of fourth moments converges to three.
\end{theorem}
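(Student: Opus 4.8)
The plan is to establish the two implications separately. Let $F_n=I_q^W(f_n)$ with $f_n\in L^2(\R_+^q)$ symmetric and $\E[F_n^2]=q!\norm{f_n}^2=1$. The ``only if'' implication is soft: if $F_n\cvlaw N\sim\mathcal N(0,1)$, then by hypercontractivity of the Ornstein--Uhlenbeck semigroup all $L^p$-norms are comparable on the fixed $q$-th Wiener chaos, so $\sup_n\E[F_n^8]\le c_q(\sup_n\E[F_n^2])^4=c_q<\infty$; hence $\{F_n^4\}$ is uniformly integrable, and combined with $F_n\cvlaw N$ this gives $\E[F_n^4]\to\E[N^4]=3$. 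The content is the converse, which I would organize in two steps.

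\emph{Step 1: the fourth moment controls a carr\'e-du-champ quantity.} Assume $\E[F_n^4]\to3$. Starting from the multiplication formula for multiple Wiener--It\^o integrals,
\[
I_q^W(f)\,I_q^W(g)=\sum_{r=0}^{q}r!\binom{q}{r}^2I_{2q-2r}^W\bra{f\otimes_r g}
\]
(where $I_m^W$ symmetrizes its argument), I would expand $F_n^4=(F_n^2)^2$ and use the orthogonality of chaoses of distinct orders together with the isometry property. After collecting terms (and using the elementary identity expressing the symmetrization $f_n\widetilde\otimes f_n$ through $\norm{f_n}^2$ and the iterated contractions $f_n\otimes_r f_n$), this yields an identity of the form
\[
\E[F_n^4]-3\bra{\E[F_n^2]}^2=\sum_{r=1}^{q-1}c_{q,r}\,\norm{f_n\widetilde\otimes_r f_n}^2,\qquad c_{q,r}>0.
\]
Next, write $D$ for the Malliavin derivative and $\mathcal H=L^2(\R_+)$; since $D_tF_n=q\,I_{q-1}^W(f_n(\cdot,t))$, one has $\tfrac1q\norm{DF_n}_{\mathcal H}^2=q\int_0^\infty I_{q-1}^W(f_n(\cdot,t))^2\,dt$, a random variable with mean $\E[F_n^2]=1$. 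Expanding its square by the same multiplication formula, one obtains (by an essentially identical, parallel bookkeeping) the bound
\[
\Var{\tfrac1q\norm{DF_n}_{\mathcal H}^2}\le\frac{q-1}{3q}\bra{\E[F_n^4]-3}.
\]
Hence the hypothesis forces $\tfrac1q\norm{DF_n}_{\mathcal H}^2\to1$ in $L^2(\Omega)$.

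\emph{Step 2: from this $L^2$ convergence to the central limit theorem.} I would use the integration-by-parts formula of Malliavin calculus: since $F_n$ lies in the $q$-th chaos (so $-L^{-1}F_n=\tfrac1qF_n$ for the Ornstein--Uhlenbeck generator $L$), one has $\E[F_n\,g(F_n)]=\tfrac1q\E\bra{g'(F_n)\norm{DF_n}_{\mathcal H}^2}$ for $g\in C_b^1$. Applying this to $g(x)=e^{itx}$ and writing $\phi_n(t)=\E[e^{itF_n}]$ gives $\phi_n'(t)=-\tfrac tq\E\bra{\norm{DF_n}_{\mathcal H}^2e^{itF_n}}$, so that
\[
\phi_n'(t)+t\,\phi_n(t)=-t\,\E\bra{\bra{\tfrac1q\norm{DF_n}_{\mathcal H}^2-1}e^{itF_n}}\longrightarrow0
\]
uniformly on compacts. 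Since the $\phi_n$ are uniformly bounded and equi-Lipschitz ($\abs{\phi_n'(t)}\le\abs t\,\E[F_n^2]=\abs t$), every subsequential limit $\phi$ satisfies $\phi'(t)=-t\phi(t)$ with $\phi(0)=1$, i.e. $\phi(t)=e^{-t^2/2}$; by L\'evy's continuity theorem, $F_n\cvlaw N(0,1)$.

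The main obstacle is the combinatorics hidden in Step 1: one must check that every $c_{q,r}$ is strictly positive and, above all, that the chaos expansion of $\tfrac1q\norm{DF_n}_{\mathcal H}^2-1$ is driven by exactly the same contraction kernels $f_n\otimes_r f_n$ as $\E[F_n^4]-3$, so that a single scalar quantity dominates the variance of the other. An alternative to Step 2, which is the route of the original proof, replaces $\tfrac1q\norm{DF_n}_{\mathcal H}^2$ by the terminal quadratic variation $\langle F_n\rangle_\infty$ and the characteristic-function argument by the Dambis--Dubins--Schwarz representation $F_n=\beta_n\bra{\langle F_n\rangle_\infty}$ for a Brownian motion $\beta_n$, reducing matters to $\langle F_n\rangle_\infty\to1$ in $L^2$ — which rests on the same variance estimate; and Step 2 can be made quantitative via Stein's method, yielding $d_{\mathrm{TV}}(F_n,N)\le C_q\sqrt{\E[F_n^4]-3}$.
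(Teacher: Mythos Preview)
The paper does not prove this theorem; it is quoted in the introduction as the classical result of Nualart and Peccati that motivates the free-probability analogues developed later. There is therefore no proof in the paper against which to compare your attempt.

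Your argument is correct and is the now-standard Malliavin-calculus proof due to Nourdin and Peccati, with the original Dambis--Dubins--Schwarz approach of Nualart and Peccati mentioned as an alternative for Step~2. The combinatorial identity you flag as ``the main obstacle'' is indeed the crux, and your statement of it is slightly loose: the exact expansion of $\E[F_n^4]-3$ involves both the symmetrized contractions $f_n\widetilde\otimes_r f_n$ and the unsymmetrized ones $f_n\otimes_r f_n$ (the latter arising when one expands $(2q)!\norm{f_n\widetilde\otimes_0 f_n}^2$), but all coefficients are strictly positive and the variance of $\tfrac1q\norm{DF_n}_{\mathcal H}^2$ is controlled by the same contractions, so the conclusion stands.

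It is worth noting that the paper \emph{does} prove the free analogue (Theorem~\ref{fourthmomentknps}), by a method different from either of the two you sketch: take a free copy $G_k$ of $F_k=I_n^{\frak M}(f_k)$, compute directly that $\mathrm{Cov}\bigl((F_k+G_k)^2,(F_k-G_k)^2\bigr)=2\bigl(\varphi(F_k^4)-2\bigr)$, deduce from the covariance characterization of asymptotic freeness that $F+G$ and $F-G$ are free in the limit, and conclude via the free Bernstein theorem that the limit is semicircular. The classical analogue of this argument---two independent copies plus the Bernstein characterization of the Gaussian---would give yet another proof of Nualart--Peccati, and is in fact the strategy of Nourdin and Rosi\'nski that the paper is transplanting to the free setting.
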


Since its introduction by Voiculescu in the eighties in order to solve some longstanding conjectures about von Neumann algebras of free groups, free probability theory has become a vivid and powerful branch of mathematics, with many applications (including signal processing, chanel capacity estimation and nuclear physics) and deep connections with other mathematical fields (like operator algebra, theory of random matrices or combinatorics).
Free probability has many parallels with the usual probability theory (hence its name), and the study of these links often brings a new point of view which may then enrich the theory of both worlds (classical and free).

Starting from the free independence property, a genuine stochastic
calculus with respect to the free Brownian motion (the free analogue
of the classical Brownian motion) has emerged within the last twenty
years, following the route paved by the seminal paper of Biane and
Speicher \cite{biane_stochastic_1998}. In particular, a common
property of the classical and free settings is the possibility of expanding the space as a sum of free chaos, giving rise to the so-called {\sl Wigner chaos}. 
By their very construction, these free chaos play in the free world
a similar role as Wiener chaos in the classical setting.
It is thus natural to investigate the similarities and differences between these two mathematical objects.
For instance, do we have an analogue of Theorem \ref{nunu-gio} in the free world? The answer is yes, and is given by the following theorem
taken from \cite{kemp_wigner_2012}.

\begin{theorem}[Kemp et. al \cite{kemp_wigner_2012}, 2012]\label{knps-intro}
A unit-variance sequence in a Wigner chaos of fixed order converges in law to the semicircular distribution if and only if the corresponding sequence of fourth moments converges to 2.
\end{theorem}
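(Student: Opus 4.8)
Write $F_k = I_n(f_k)$ for the given sequence in the $n$-th Wigner chaos. As is standard here (and necessary for convergence to a semicircular element), we assume each $F_k$ is self-adjoint, i.e. each kernel $f_k \in L^2(\R_+^n)$ is real and mirror-symmetric; the unit-variance hypothesis then reads $\norm{f_k}_{L^2(\R_+^n)}^2 = \varphi(F_k^2) = 1$, where $\varphi$ is the trace of the underlying tracial $W^*$-probability space. Since $F_k$ belongs to a chaos of \emph{fixed} order $n$, a known norm bound \cite{biane_stochastic_1998} gives $\norm{F_k} \le (n+1)\norm{f_k}_{L^2(\R_+^n)}$ in operator norm, so all moments $\varphi(F_k^m)$, $m \ge 1$, are bounded uniformly in $k$. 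As the standard semicircular law $S$ is compactly supported, hence moment-determinate, convergence in law $F_k \to S$ is equivalent to $\varphi(F_k^m) \to \varphi(S^m)$ for every $m \ge 1$, where $\varphi(S^{2s}) = C_s$, the $s$-th Catalan number, and $\varphi(S^{2s+1}) = 0$. In particular, necessity is immediate since $\varphi(S^4) = C_2 = 2$.

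For sufficiency, the first step is to convert the hypothesis $\varphi(F_k^4) \to 2$ into the vanishing of contractions. Combining the Wigner product formula $I_n(g)\,I_n(h) = \sum_{p=0}^n I_{2n-2p}\bra{g \cont{p} h}$, the orthogonality of Wigner integrals of distinct orders, the isometry $\norm{I_r(g)}_{L^2}^2 = \norm{g}_{L^2(\R_+^r)}^2$, and the self-adjointness of $F_k^2$, one gets
\begin{equation*}
\varphi(F_k^4) = \norm{F_k^2}_{L^2}^2 = \sum_{p=0}^n \norm{f_k \cont{p} f_k}_{L^2(\R_+^{2n-2p})}^2 = 2 + \sum_{p=1}^{n-1} \norm{f_k \cont{p} f_k}_{L^2(\R_+^{2n-2p})}^2,
\end{equation*}
since the boundary terms $p=0$ (which produces $f_k \otimes f_k$) and $p=n$ (which produces the scalar $\int_{\R_+^n} f_k^2$) each contribute $\norm{f_k}_{L^2}^4 = 1$. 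Hence $\varphi(F_k^4) \to 2$ if and only if $f_k \cont{p} f_k \to 0$ in $L^2(\R_+^{2n-2p})$ for every $p \in \{1,\ldots,n-1\}$, and it remains to show that the vanishing of these partial contractions forces all higher moments to their semicircular values.

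The plan for this last step is a non-crossing diagram expansion. Iterating the product formula expresses $F_k^m$ as a finite sum --- over admissible sequences of contraction orders --- of Wigner integrals $I_r(g)$ of iterated self-contractions of $f_k$; as $\varphi(I_r(g)) = 0$ for $r \ge 1$, only the summands collapsing to a scalar survive the application of $\varphi$. A bookkeeping argument, the free counterpart of the diagram computation behind Theorem \ref{nunu-gio}, then shows that the surviving terms are organised by pairings of $\{1,\ldots,m\}$; that a pairing contributes $\norm{f_k}_{L^2}^m = 1$ in the limit precisely when it is non-crossing --- of which there are $C_s$ when $m = 2s$ and none when $m$ is odd --- and that every other term is dominated, uniformly in $k$, by a constant (depending only on $n$ and $m$) times $\max_{1 \le p \le n-1}\norm{f_k \cont{p} f_k}_{L^2}$. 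Therefore $\varphi(F_k^m) \to 0$ for odd $m$ and $\varphi(F_k^{2s}) \to C_s$, i.e. $\varphi(F_k^m) \to \varphi(S^m)$ for all $m$, which is the conclusion.

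The main obstacle is the last paragraph, and specifically the claim that any ``incomplete'' iterated self-contraction of $f_k$ --- one that does not collapse to $\norm{f_k}_{L^2}^m$ --- is controlled by the partial contraction norms $\norm{f_k \cont{p} f_k}_{L^2}$ with $1 \le p \le n-1$. Proving this needs a careful iterated Cauchy--Schwarz estimate, organised by a combinatorial lemma that pins down exactly which pairings of the $m$ copies of $f_k$ yield a complete contraction; it is here that the restriction to a fixed order $n$ is genuinely used. An alternative route, closer in spirit to the rest of this paper, would replace this combinatorics by a free Malliavin--Stein argument: using the free difference quotient, the free divergence, and the free number operator (which acts as multiplication by $n$ on the $n$-th chaos), one bounds a suitable distance between the law of $F_k$ and the semicircular law by a constant multiple of $\varphi(F_k^4) - 2$, which the fourth moment identity above sends to $0$.
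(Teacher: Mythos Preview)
Your outline is correct and essentially reproduces the original argument of \cite{kemp_wigner_2012}: convert the fourth-moment hypothesis into the vanishing of all non-trivial self-contractions $f_k \cont{p} f_k$ ($1\le p\le n-1$), then control every higher moment via the non-crossing pairing expansion you describe. The paper, however, takes a genuinely different route that sidesteps the combinatorial step you flag as the main obstacle. It introduces a free copy $G_k = I_n^{\frak{M}}(g_k)$ of $F_k$, computes directly that
\[
\mathrm{Cov}\bigl((F_k+G_k)^2,\,(F_k-G_k)^2\bigr) = 2\bigl(\varphi(F_k^4)-2\bigr),
\]
and then invokes the covariance characterization of asymptotic freeness (Theorem~\ref{frenessasymptoticwignerintegrals}) to deduce that the limits $F+G$ and $F-G$ are free; the free Bernstein theorem of Nica then forces $F$ (and $G$) to be semicircular. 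This bypasses the iterated Cauchy--Schwarz and diagram analysis entirely, at the price of requiring fully \emph{symmetric} kernels, since Theorem~\ref{frenessasymptoticwignerintegrals} is stated only in that generality. What your approach buys is the broader mirror-symmetric setting (and is closer to the original KNPS paper); what the paper's approach buys is a short, conceptual argument that showcases the asymptotic-freeness machinery developed in Section~4, and which is the stated purpose of Section~6.
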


Shortly after the publication of \cite{kemp_wigner_2012}, many other
results in the spirit of Theorem \ref{knps-intro} have been added to
the literature, including the following ones (the list is not exhaustive).

In
\cite{nourdin_multi-dimensional_2013}, it is shown that component-wise
convergence to the semicircular distribution is equivalent to joint
convergence, thus extending to the free probability  setting a seminal
result by Peccati and Tudor (see also
\cite{peccati_gaussian_2005}).

In \cite{nourdin_poisson_2013}, a
non-central counterpart of Theorem \ref{knps-intro} is provided. More
precisely, it is shown that any adequately rescaled sequence $\left\{
  F_n \colon n\geq 0 \right\}$ of self-adjoint operators living inside
a fixed Wigner chaos of even order converges in distribution to a
centered free Poisson random variable with rate $\lambda>0$ if and
only if $\varphi(F_n^4)-2\varphi(F_n^3)\to 2\lambda^2-\lambda$ (where
$\varphi$ is the relevant tracial state).

In
\cite{nourdin_convergence_2012}, convergence in law of any sequence
belonging to the second Wigner chaos is characterized by means of the
convergence of only a finite number of cumulants.

In
\cite{deya_convergence_2012}, making use of heavy combinatorics it is
shown that any adequately rescaled sequence $\left\{ F_n \colon n\geq
  0 \right\}$ of self-adjoint operators living inside a fixed Wigner
chaos converges in distribution to the tetilla law $\mathcal{T}$ if
and only if $\varphi(F_n^4)\to \varphi(\mathcal{T}^4)$ and
$\varphi(F_n^6)\to \varphi(\mathcal{T}^6)$ (where $\varphi$ is the
relevant tracial state). Note that this finding is not an extension of
a result known in the classical probability theory, as the existence
of such a result in the classical setting is still an open problem.

In
\cite{bourguin_semicircular_2014}, a class of sufficient conditions,
ensuring that a sequence of multiple integrals with respect to a free
Poisson measure converges to a semicircular limit, is established,
thus providing an analog of Theorem \ref{knps-intro} in the context of free
Poisson chaos.

In \cite{bourguin_poisson_2015}, a fourth moment type
condition is given, for an element of a free Poisson chaos of
arbitrary order to converge to a free centered Poisson
distribution.

In \cite{arizmendi_convergence_2014}, an estimate for
the Kolmogorov distance between a freely infinitely divisible
distribution and the semicircle distribution is given, in terms of the
difference between the fourth moment and two.

In
\cite{bourguin_vector-valued_2016}, a multidimensional counterpart of
the aforementioned central limit theorem on the free Poisson chaos is
given.

In \cite{bourguin_free_2017}, a quantitative version of Theorem
 \ref{knps-intro} is derived, using free stochastic analysis as well as a new biproduct formula for bi-integrals.

In the present paper, our main goal is to provide characterizations of
free independence on the Wigner and free Poisson chaos, as well as
investigate the similarities and dissimilarities between classical and
free chaos, as far as (possibly asymptotic) independence properties are concerned. 

Our first set of investigations yields a characterization of freeness on the Wigner and free Poisson chaos, in terms of contractions, covariances, or free Malliavin gradient, thus providing a suitable extension of Theorem \ref{ustu-zakai} (and related results) to the free setting. Most of our results turn out to be similar to the classical setting, with the notable exception of the characterization of freeness in terms of the free Malliavin gradient, this last fact illustrating a fundamental difference between the classical and the free cases.

Our second set of investigations is concerned again with the independence property, but this time in an asymptotic context. Here, the problem is to find what conditions are to be imposed on {\sl limits} of multiple integrals to be free.

The remainder of this paper is organized as follows: Section 2 contains a short introduction to free probability theory, with a special emphasis to the material needed for the rest of the paper.
Section 3 is devoted to the characterization of freeness on the Wigner
and free Poisson chaos, in terms of contractions, covariances, or free
Malliavin gradient. This section also provides several lemmas which will be used to prove our main results in the following sections.
In Section 4, we study different characterizations of asymptotic freeness, in several contexts. 
We devote Section 5 to the study of transfer principles between
classical and free chaos.
Finally, Section 6 contains auxiliary results that are used throughout the paper.

\section{Preliminaries}

\subsection{Elements of free probability}
In the following, a short introduction to free probability theory is
provided. For a thorough and complete treatment, see \cite{nica_lectures_2006},
\cite{voiculescu_free_1992} and \cite{hiai_semicircle_2000}. Let $\left(
  \mathscr{A}, \varphi \right) $ be a tracial $W^*$-probability space, that is
$\mathscr{A}$ is a von Neumann algebra with involution $*$ and $\varphi \colon
\mathscr{A} \rightarrow \mathbb{C}$ is a unital linear functional assumed to be
weakly continuous, positive (meaning that $\varphi\left( X\right) \geq 0$
whenever  $X$ is a non-negative element of $\mathscr{A}$), faithful (meaning
that $\varphi\left(XX^* \right) = 0 \Rightarrow X = 0$ for every $X \in
\mathscr{A}$) and tracial (meaning that $\varphi\left(XY \right) =
\varphi\left(YX \right) $ for all $X,Y \in \mathscr{A}$). The self-adjoint
elements of $\mathscr{A}$ will be referred to as random variables.
The non-commutative space 
$L^2(\mathscr{A},\varphi)$ denotes the completion of $\mathscr{A}$ with respect
to the norm $\norm{X}_2 = \sqrt{\varphi\left( XX^* \right) }$.

Recall the definition of freeness (see \cite[Definition
5.3]{nica_lectures_2006} and \cite[Remarks 5.4]{nica_lectures_2006} or \cite[Definition 2.5.18]{tao_topics_2012}) for a collection of non-commutative random variables living on an appropriate non-commutative probability space $\left(\mathscr{A}, \varphi \right) $.
\begin{definition}
  \label{deffreeness}
A collection of random variables $X_1, \ldots , X_n$ on $\left( \mathscr{A}, \varphi \right) $ is said to be free if $$\varphi\left( \left[P_1\left(X_{i_1}\right) - \varphi\left( P_1\left(X_{i_1}\right)\right)   \right] \cdots \left[P_m\left(X_{i_m}\right) - \varphi\left( P_m\left(X_{i_m}\right)\right)   \right] \right) = 0 $$ whenever $P_1, \ldots , P_m$ are polynomials and $i_1, \ldots , i_m \in \left\lbrace 1, \ldots, n\right\rbrace $ are indices with no two adjacent $i_j$ equal.
\end{definition}

 Let $X\in \mathscr{A}$. The $p$-th moment of $X$ is given by the quantity
 $\varphi(X^{p})$, $p \in \mathbb{N}_0$. Now assume that $X$ is a
 self-adjoint bounded element of $\mathscr{A}$ (in other words, $X$ is
 a bounded random variable), and write $\rho(X)= \norm{X} \in [0, \infty)$ to indicate the {\it spectral radius} of $X$. 
\begin{definition}
The {\it law} (or {\it spectral measure}) of $X$ is defined as the
unique Borel probability measure $\mu_{X}$ on the real line such that $\int_{\mathbb{R}}P(t)\ d\mu_{X}(t) = \varphi(P(X))$
for every polynomial $P \in \mathbb{R}\left[ X\right]$. A consequence
of this definition is that $\mu_X$ has support in $[-\rho(X), \rho(X)]$.

\end{definition}
The existence and uniqueness of $\mu_X$ in such a general framework are proved e.g. in \cite[Theorem 2.5.8]{tao_topics_2012} (see also \cite[Proposition 3.13]{nica_lectures_2006}). Note that, since $\mu_X$ has compact support, the measure $\mu_X$ is completely determined by the sequence $\left\lbrace \varphi(X^p) \colon p\geq 1\right\rbrace $. 

Let $\left\lbrace X_{k} \colon k \geq 1\right\rbrace $ be a sequence of non-commutative random variables, each possibly belonging to a different non-commutative probability space $(\mathscr{A}_k, \varphi_k)$. 
\begin{definition}
The sequence $\left\lbrace X_{k} \colon k \geq 1\right\rbrace $ is said to converge in distribution to a limiting non-commutative random variable $X_{\infty}$ (defined on $(\mathscr{A}_\infty, \varphi_\infty)$), if $\varphi_k(P(X_{k})) \underset{k \rightarrow +\infty}{\longrightarrow} \varphi_\infty(P(X_{\infty}))$ for every polynomial $P\in \R[X]$.
\end{definition}
 If $X_{k}, X_\infty$ are bounded (and therefore the spectral measures
 $\mu_{X_k}, \mu_{X_\infty}$ are well-defined), this last relation is
 equivalent to saying that $$\int_\R P(t)\, \mu_{X_k}(dt) \underset{k \rightarrow +\infty}{\longrightarrow} \int_\R
 P(t)\, \mu_{X_\infty}(dt).$$ An application of the method of moments
 yields immediately that, in this case, one has also that $\mu_{X_k}$
 weakly converges to $\mu_{X_\infty}$, that is $\mu_{X_k}(f) \underset{k \rightarrow +\infty}{\longrightarrow}
 \mu_{X_\infty}(f)$, for every $f\colon \R\to \R$ bounded and continuous
 (note that no additional uniform boundedness assumption is needed).
 
In this paper, we will also deal with {\it joint} convergences in law, for sequences
$\left\lbrace X_{k}=(X_k^1,\ldots,X_k^d) \colon k \geq 1\right\rbrace $ of non--commutative random vectors, each possibly belonging to a different non-commutative probability space $(\mathscr{A}_k, \varphi_k)$. 
\begin{definition}
The vector-valued sequence $\left\lbrace X_{k}=(X_k^1,\ldots,X_k^d) \colon k \geq 1\right\rbrace $ is said to converge jointly in distribution to a limiting non-commutative random vector $X_{\infty}=(X_\infty^1,\ldots,X_\infty^d)$ (defined on $(\mathscr{A}_\infty, \varphi_\infty)$), if any moment in the variables $X_k^1,\ldots,X_k^d$ converges, as $k\to\infty$, to the corresponding moments in $X_\infty^1,\ldots,X_\infty^d$; otherwise stated, $(X_k^1,\ldots,X_k^d) \overset{\rm law}{\to} (X_\infty^1,\ldots,X_\infty^d)$ if for any $r\in\mathbb{N}$ and positive
integers $i_1,\ldots,i_r$, one has, as $k\to\infty$:
$$
\varphi_k\big[X_k^{i_1}\ldots X_k^{i_r}\big]\to \varphi_\infty\big[X_\infty^{i_1}\ldots X_\infty^{i_r}\big].
$$
\end{definition}
 
 Let us now define the two main processes we will deal with in this paper, namely the free Brownian motion and the free Poisson process.
 \begin{definition}
 \begin{enumerate}[1.]
\item The centered semicircular distribution with variance $t>0$, denoted by
$\mathcal{S}(0,t)$, is the probability distribution given
by $$\mathcal{S}(0,t)(dx) = (2\pi t)^{-1}\sqrt{4t-x^2}
\mathds{1}_{\left[ -2\sqrt{t} , 2\sqrt{t} \right]}(x)dx.$$ 
\item A free Brownian motion $S$ consists of: (i) a filtration $\left\lbrace \mathscr{A}_t \colon t \geq 0 \right\rbrace $ of von Neumann sub-algebras of $\mathscr{A}$ (in particular, $\mathscr{A}_s \subset \mathscr{A}_t$ for $0 \leq s < t$), (ii) a collection $S = \left\lbrace S_t \colon t\geq 0\right\rbrace $ of self-adjoint operators in $\mathscr{A}$ such that: (a) $S_0 = 0$ and $S_t \in \mathscr{A}_t$ for all $t \geq 0$, (b) for all $t \geq 0$, $S_t$ has a semicircular distribution with mean zero and variance $t$, and (c) for all $0 \leq u < t$, the increment $S_t - S_u$ is free with respect to $\mathscr{A}_u$, and has a semicircular distribution with mean zero and variance $t-u$.
\end{enumerate}
\end{definition}
\begin{definition}
 \begin{enumerate}[1.]
\item The free Poisson distribution with rate $\lambda >0 $, denoted by $P(\lambda)$, is the probability distribution defined 
as follows: (i) if $\lambda\in (0,1]$, then $P(\lambda) = (1-\lambda)\delta_0 + \lambda\widetilde{\nu}$, and (ii) if $\lambda > 1$, then 
$P(\lambda) = \widetilde{\nu}$, where $\delta_0$ stands for the Dirac
mass at $0$. Here,
\begin{equation*}
\widetilde{\nu}(dx) = 
(2\pi x)^{-1}\sqrt{4\lambda-(x-1-\lambda)^2}
\mathds{1}_{\left[(1-\sqrt{\lambda})^2 , (1+\sqrt{\lambda})^2  \right]}(x)dx.
\end{equation*}
\item A free Poisson process $N$ consists of: (i) a filtration $\left\lbrace \mathscr{A}_t \colon t \geq 0 \right\rbrace $ of von Neumann sub-algebras of $\mathscr{A}$ (in particular, $\mathscr{A}_s \subset \mathscr{A}_t$ for $0 \leq s < t$), (ii) a collection $N = \left\lbrace N_t \colon t\geq 0\right\rbrace $ of self-adjoint operators in $\mathscr{A}_{+}$ ($\mathscr{A}_{+}$ denotes the cone of positive operators in $\mathscr{A}$) such that: (a) $N_0 = 0$ and $N_t \in \mathscr{A}_t$ for all $t \geq 0$, (b) for all $t \geq 0$, $N_t$ has a free Poisson distribution with rate $t$, and (c) for all $0 \leq u < t$, the increment $N_t - N_u$ is free with respect to $\mathscr{A}_u$, and has a free Poisson distribution with rate $t-u$. $\hat N$ will denote the collection of random variables $\hat{N}= \left\lbrace  \hat{N}_t = N_t - t\mathbf{1} \colon t\geq 0 \right\rbrace $, where $\mathbf{1}$ stands for the unit of $\mathscr{A}$. $\hat N$ will be referred to as a compensated free Poisson process.
\end{enumerate}
\end{definition}
\begin{remark}
{\rm
 In the sequel, $\frak{M}$ will stand for either the free Brownian motion $S$ or the compensated free Poisson process $\hat{N}$. 
 }
\end{remark}

We continue with some definitions that will play a crucial role in the rest of the paper.
For every integer $n\geq 1$, the space $L^2\left( \R_{+}^n;\mathbb{C}\right) = L^2\left( \R_{+}^n\right)$ denotes the collection of all complex-valued functions on $\R_{+}^n$ that are square-integrable with respect to the Lebesgue measure on $\R_{+}^n$. 
\begin{definition}
\label{mirrorsymdef}
Let $n$ be a natural number and let $f$ be a function in $L^2\left( \R_{+}^n\right)$. 
\begin{enumerate}[1.]
\item The adjoint of $f$ is the function $f^{\ast}\left(t_1, \ldots , t_n \right) = \overline{f\left(t_n, \ldots , t_1\right)}$.
\item The function $f$ is called mirror-symmetric if $f = f^{\ast}$, i.e., if $$f\left(t_1, \ldots , t_n \right) = \overline{f\left(t_n, \ldots , t_1\right)}$$ for almost all $\left( t_1,\ldots , t_n\right) \in \R_{+}^{n}$ with respect to the product Lebesgue measure.
\item The function $f$ is called (fully) symmetric if it is real-valued and, for any permutation $\sigma$ in the symmetric group $\mathfrak{S}_{n}$, it holds that $f\left( t_1, \ldots , t_n\right) = f\left( t_{\sigma(1)}, \ldots , t_{\sigma(n)}\right) $ for almost all $\left( t_1,\ldots , t_n\right) \in \R_{+}^{n}$ with respect to the product Lebesgue measure.
\end{enumerate}
\end{definition}
\begin{definition}
\label{defcontractions}
Let $n,m$ be natural numbers and let $f \in L^2\left( \R_{+}^n\right)$ and $g \in L^2\left( \R_{+}^m\right)$. Let $p \leq n \wedge m$ be a natural number. The $p$-th nested contraction $f \cont{p} g$ of $f$ and $g$ is the $L^2\left( \R_{+}^{n+m-2p}\right)$ function defined by nested integration of the middle $p$ variables in $f \otimes g$:
\begin{eqnarray*}
( f  \cont{p} g) (t_1,\ldots, t_{n+m - 2p}) &=& \int_{\R_{+}^{p}}f(t_1,\ldots , t_{n-p},s_1, \ldots , s_p) \\
&& \qquad \times g(s_p, \ldots , s_1 , t_{n-p+1},\ldots, t_{n+m-2p})ds_1 \cdots ds_p.
\end{eqnarray*}
In the case where $p=0$, the function $f  \cont{0} g$ is just given by
$f \otimes g$.
\end{definition}
Similarly, we define the star contraction $f \star_k^j g$ of $f$ and $g$.  \begin{definition}
\label{defcontractions2}
Let $n,m$ be natural numbers and let $f \in L^2\left( \R_{+}^n\right)$ and $g \in L^2\left( \R_{+}^m\right)$. Let $k \in \{1,\ldots, n \wedge m\}$ and $j\in\{0,\ldots,k\}$ be two natural numbers. We set
\begin{align*}
&( f  \star_k^j g) (t_1,\ldots, t_{n+m - 2k+j})= \int_{\R_{+}^{k-j}}f(t_1,\ldots , t_{n-k+j},s_{k-j}, \ldots , s_{1}) \\
&\hskip3.5cm  \times g(s_{1}, \ldots , s_{k-j} , t_{n-k+1},\ldots, t_{n+m-2k+j})ds_1 \cdots ds_{k-j}.
\end{align*}
\end{definition}

 For $f \in L^2\left(\R_{+}^n\right)$, we denote by $I_{n}^S(f)$ the multiple
 Wigner integral of $f$ with respect to the free Brownian motion as introduced
 in \cite{biane_stochastic_1998}. The space $L^2(\mathcal{S}, \varphi) =
 \{I_n^S(f) \colon f\in L^2(\R_{+}^n), n\geq 0\}$ is a unital $\ast$-algebra, with
 product rule given, for any $n,m\geq 1$, $f \in L^2\left(\R_{+}^n\right)$, $g
 \in L^2\left(\R_{+}^m\right)$, by 
\begin{equation}
\label{productformula}
I_n^S(f)I_m^S(g) = \sum_{p=0}^{n \wedge m} I_{n+m-2p}^S\left( f \cont{p} g\right) 
\end{equation}
and involution $I_n^S(f)^{\ast} = I_n^S(f^{\ast})$. For a proof of this
formula, see \cite{biane_stochastic_1998}.

Similarly, we can define
free Poisson multiple integrals with respect to $\hat{N}$ (these
integrals were studied in depth in \cite{bourguin_semicircular_2014},
and we refer to this reference for details). The space
$L^2(\mathcal{N}, \varphi) = \{I^{\hat{N}}_n(f) \colon f\in L^2(\R_{+}^n), n\geq 0\}$ is a unital $\ast$-algebra, with product rule given, for any $n,m\geq 1$, $f \in L^2\left(\R_{+}^n\right)$, $g \in L^2\left(\R_{+}^m\right)$, by
\begin{equation}
  \label{productformulafreepoisson}
I^{\hat{N}}_n(f)I^{\hat{N}}_m(g) = \sum_{p=0}^{n \wedge m} I^{\hat{N}}_{n+m-2p}\left( f \cont{p} g\right) +  \sum_{p=1}^{n \wedge m} I^{\hat{N}}_{m+n-2p+1}\left( f \smcont{p} g\right)
\end{equation}
and involution $I_n^{\hat{N}}(f)^{\ast} =
I_n^{\hat{N}}(f^{\ast})$. For a proof of this formula, see \cite{bourguin_semicircular_2014}.

Furthermore, as is well-known, both Wigner and free Poisson multiple integrals of different orders are orthogonal in $L^2(\mathscr{A},\varphi)$, whereas for two integrals of the same order, the Wigner isometry holds:
\begin{equation}
\label{wigneriso}
\varphi\left( I_n^{\frak{M}}(f)I_n^{\frak{M}}(g)^{*}\right) = \left\langle f,g \right\rangle_{L^2\left(\mathbb{R}_{+}^n \right) }.
\end{equation}

\begin{remark}
{\rm
\begin{enumerate}[1.]
\item
Observe that it follows from the definition of the involution on the
algebras $L^2(\mathcal{S}, \varphi)$ and $L^2(\mathcal{N}, \varphi)$ that operators of the type $I_n^{\frak{M}}(f)$ are self-adjoint if and only if $f$ is mirror-symmetric.
\item
In what follows, we will use the notation $I_n^S$, $I_n^{\hat{N}}$,
$I_n^W$ and $I_n^{\hat{\eta}}$ to denote multiple Wigner integrals,
multiple free Poisson integrals,
multiple Wiener integrals, and multiple classical Poisson integrals, respectively.
\end{enumerate}
}
\end{remark}

\subsection{Bi-integrals and free gradient operator}

In this particular subsection, we only focus on the Wigner case, as
the tools we are about to introduce do not exist in the context of
free Poisson processes.

Let $\left( \mathscr{A},\varphi\right) $ be a $W^{*}$-probability space. An $\mathscr{A} \otimes \mathscr{A}$-valued stochastic process $t\mapsto U_t$ is called a biprocess. For $p \geq 1$, $U$ is an element of $\mathscr{B}_{p}$, the space of $L^p$-biprocesses, if its norm
\begin{equation*}
\norm{U}_{\mathscr{B}_p}^{2} = \int_{0}^{\infty} \norm{U_t}_{L^p\left(\mathscr{A} \otimes \mathscr{A}, \varphi \otimes \varphi\right) }^{2}dt
\end{equation*}
is finite.

Let $n,m$ be two positive integers and $f=g\otimes h \in L^2\left( \mathbb{R}_{+}^{n}\right) \otimes L^2\left( \mathbb{R}_{+}^{m}\right)$. Then, the Wigner bi-integral $[I_n^S \otimes I_m^S](f)$ is defined as 
\begin{equation*}
[I_n^S \otimes I_m^S](f) = I_n^S(g) \otimes I_m^S(h).
\end{equation*}
From the Wigner isometry  for multiple integrals, we obtain the so called Wigner bisometry: for $f \in L^2\left( \mathbb{R}_{+}^{n}\right) \otimes L^2\left( \mathbb{R}_{+}^{m}\right)$ and $g \in L^2\left( \mathbb{R}_{+}^{n'}\right) \otimes L^2\left( \mathbb{R}_{+}^{m'}\right)$ having the form of a tensor product, it holds that
\begin{equation}
\label{wignerbisometry}
\varphi \otimes \varphi\left([I_n^S \otimes I_m^S](f) [I_{n'}^S \otimes I_{m'}^S](g)^{*} \right)= \begin{cases} \left\langle f,g \right\rangle_{L^2\left( \mathbb{R}_{+}^{n}\right) \otimes L^2\left( \mathbb{R}_{+}^{m}\right)} &  \text{if $n=n'$ and $m=m'$},\\ 0 &  \text{otherwise.}\end{cases} 
\end{equation}
Formula (\ref{wignerbisometry}) is then extended linearly to generic elements $f \in L^2\left(
  \mathbb{R}_{+}^{n}\right) \otimes L^2\left( \mathbb{R}_{+}^{m}\right) \cong
L^2\left( \mathbb{R}_{+}^{n+m}\right)$, where the symbol $\cong$ denotes an
isomorphic identification. 

A crucial tool in the analysis of Wigner integrals is the product
formula \eqref{productformula}, and a biproduct formula for bi-integrals was recently
obtained in \cite{bourguin_free_2017}, which will be a crucial
tool in the sequel. It makes use of a new type of
contraction, referred to in \cite{bourguin_free_2017} as
bicontractions, defined as follows.
Let $n_1, m_1, nf_2, m_2$ be positive integers. Let $f \in L^2\left(
  \mathbb{R}_{+}^{n_1}\right) \otimes L^2\left( \mathbb{R}_{+}^{m_1}\right)
\cong L^2\left( \mathbb{R}_{+}^{n_1+m_1}\right)$ and $g \in L^2\left(
  \mathbb{R}_{+}^{n_2}\right) \otimes L^2\left( \mathbb{R}_{+}^{m_2}\right)
\cong L^2\left( \mathbb{R}_{+}^{n_2+m_2}\right)$ and let $p \leq n_1 \wedge
n_2$, $r \leq m_1 \wedge m_2$ be natural numbers. The $(p,r)$-bicontraction $f
\conts{p,r} g$ is the $L^2\left( \mathbb{R}_{+}^{n_1+n_2 -2p}\right) \otimes
L^2\left( \mathbb{R}_{+}^{m_1+m_2 -2r}\right) \cong L^2\left(
  \mathbb{R}_{+}^{n_1+n_2+m_1+m_2 -2p-2r}\right)$ function defined by 
\begin{align*}
&f \conts{p,r} g(  t_1, \ldots , t_{n_1+n_2+m_1+m_2 -2p-2r} ) = \int_{\mathbb{R}_{+}^{p+r}}f(t_1, \ldots ,
 t_{n_1-p},s_p,\ldots,s_1, y_1,\ldots, y_r, \\
 & \qquad\qquad\qquad\qquad\qquad\qquad\qquad\qquad t_{n_1 + n_2 +m_2 -2p -r +1}, \ldots , t_{n_1 + n_2 + m_1 +m_2 -2p -2r}  ) \\  
 &\qquad \times g\left( s_1 , \ldots , s_p , t_{n_1-p+1},\ldots, t_{n_1+n_2+m_2 -2p-r}, y_r, \ldots, y_1\right)  ds_1 \cdots ds_p dy_1 \cdots dy_r.
\end{align*}
\begin{remark}
  \label{remarkbicont}
  {\rm
Observe that these bicontractions have the following properties (for a
proof, see \cite{bourguin_free_2017}). For $n_1, m_1, n_2, m_2 \in \mathbb{N}$, let $f \in L^2\left(
  \mathbb{R}_{+}^{n_1}\right) \otimes L^2\left( \mathbb{R}_{+}^{m_1}\right)
\cong L^2\left( \mathbb{R}_{+}^{n_1+m_1}\right)$ and $g \in L^2\left(
  \mathbb{R}_{+}^{n_2}\right) \otimes L^2\left( \mathbb{R}_{+}^{m_2}\right)
\cong L^2\left( \mathbb{R}_{+}^{n_2+m_2}\right)$ be fully symmetric
functions. Furthermore, let $p \leq n_1 \wedge n_2$ and $r \leq m_1 \wedge m_2$ be natural
numbers such that $p+r = p'+r'$. Then, the following holds.
\begin{enumerate}[1.]
\item $f \conts{p,r} g \cong f \conts{p+r} g$.
\item $f \conts{p,r} g =f \conts{p',r'} g$.
\item $\norm{f \conts{p,r} g}_{L^2\left( \mathbb{R}_{+}^{n_1+n_2 -2p}\right) \otimes L^2\left( \mathbb{R}_{+}^{m_1+m_2 -2r}\right)}^{2} = \norm{f \cont{p+r} g}_{L^2\left( \mathbb{R}_{+}^{n_1+n_2+m_1+m_2 -2p-2r}\right)}^{2}$.
\item $f \conts{n_1,m_1} f = \norm{f}_{L^2\left( \mathbb{R}_{+}^{n_1}\right) \otimes L^2\left( \mathbb{R}_{+}^{m_1}\right)}^{2} 1\otimes 1$, which is a constant in $L^2\left( \mathbb{R}_{+}^{n_1}\right) \otimes L^2\left( \mathbb{R}_{+}^{m_1}\right)$.
\end{enumerate}
}
\end{remark}

We introduce $\sharp$ to be the associative
 action of $\mathscr{A} \otimes \mathscr{A}^{\operatorname{op}}$ (where
 $\mathscr{A}^{\operatorname{op}}$ denotes the opposite algebra) on $\mathscr{A}
 \otimes \mathscr{A}$, as 
\begin{equation}
\label{sharpdef}
(A\otimes B) \sharp (C \otimes D) = (AC) \otimes (DB).
\end{equation}
Furthermore, we also write $\sharp$ to denote the action of $\mathscr{A} \otimes L^2\left( \mathbb{R}_{+}\right) \otimes  \mathscr{A}^{\operatorname{op}}$ on $\mathscr{A} \otimes L^2\left( \mathbb{R}_{+}\right) \otimes  \mathscr{A}$, as
\begin{equation*}
\label{sharpdefwithhilbert}
(A\otimes f \otimes  B) \sharp (C \otimes g \otimes  D) = (AC) \otimes fg \otimes  (DB).
\end{equation*}
Using the bicontractions definition, the biproduct formula for Wigner
bi-integrals proved in \cite{bourguin_free_2017} can be stated as follows.
\begin{proposition}[Bourguin and Campese \cite{bourguin_free_2017}, 2017]
\label{productformulabi}
For $n_1, m_1, n_2, m_2 \in \mathbb{N}$, let $f \in L^2\left(
  \mathbb{R}_{+}^{n_1}\right) \otimes L^2\left( \mathbb{R}_{+}^{m_1}\right)
\cong L^2\left( \mathbb{R}_{+}^{n_1+m_1}\right)$ and $g \in L^2\left(
  \mathbb{R}_{+}^{n_2}\right) \otimes L^2\left( \mathbb{R}_{+}^{m_2}\right)
\cong L^2\left( \mathbb{R}_{+}^{n_2+m_2}\right)$. Then it holds that 
\begin{equation}
\label{biintegralmultformula}
[I_{n_1}^S \otimes I_{m_1}^S]\left(f\right) \sharp [I_{n_2}^S \otimes I_{m_2}^S]\left(g\right)= \sum_{p=0}^{n_1 \wedge n_2}\sum_{r=0}^{m_1 \wedge m_2}[I_{n_1+n_2 -2p}^S \otimes I_{m_1+m_2 -2r}^S]\left(f \conts{p,r}g\right).
\end{equation}
\end{proposition}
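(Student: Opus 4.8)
The plan is to reduce \eqref{biintegralmultformula} to elementary tensors and then simply chain together the definition of the Wigner bi-integral, the $\sharp$-action \eqref{sharpdef}, and the scalar Wigner product formula \eqref{productformula}. Both sides of \eqref{biintegralmultformula} are bilinear in the pair $(f,g)$, and a routine density argument handles the general case: the right-hand side is $L^2$-continuous in each argument by the bisometry \eqref{wignerbisometry} (bicontractions being $L^2$-bounded by Cauchy--Schwarz, exactly as ordinary contractions are), and once the formula is known on finite linear combinations of elementary tensors the left-hand side inherits the same $L^2$-continuity and hence extends and agrees with the right-hand side everywhere. Consequently it suffices to prove \eqref{biintegralmultformula} for $f = g_1\otimes h_1$ and $g = g_2\otimes h_2$ with $g_i\in L^2(\R_+^{n_i})$ and $h_i\in L^2(\R_+^{m_i})$.

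For such $f$ and $g$ I would first unfold the definitions, $[I_{n_1}^S\otimes I_{m_1}^S](f) = I_{n_1}^S(g_1)\otimes I_{m_1}^S(h_1)$ and likewise for $g$, so that \eqref{sharpdef} gives
\begin{equation*}
[I_{n_1}^S\otimes I_{m_1}^S](f)\,\sharp\,[I_{n_2}^S\otimes I_{m_2}^S](g) = \bigl(I_{n_1}^S(g_1)\,I_{n_2}^S(g_2)\bigr)\otimes\bigl(I_{m_2}^S(h_2)\,I_{m_1}^S(h_1)\bigr),
\end{equation*}
the \emph{order reversal} in the second slot ($h_2$ before $h_1$) being forced by the opposite-algebra structure built into \eqref{sharpdef}. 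Applying the product formula \eqref{productformula} inside each tensor slot then turns the right-hand side into
\begin{equation*}
\sum_{p=0}^{n_1\wedge n_2}\sum_{r=0}^{m_1\wedge m_2} I_{n_1+n_2-2p}^S(g_1\cont{p}g_2)\otimes I_{m_1+m_2-2r}^S(h_2\cont{r}h_1) = \sum_{p=0}^{n_1\wedge n_2}\sum_{r=0}^{m_1\wedge m_2}[I_{n_1+n_2-2p}^S\otimes I_{m_1+m_2-2r}^S]\bigl((g_1\cont{p}g_2)\otimes(h_2\cont{r}h_1)\bigr),
\end{equation*}
the last equality being again the definition of the bi-integral on elementary tensors.

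The only genuinely substantive step remaining is the \emph{kernel identification}: one must check that, for elementary tensors, $f\conts{p,r}g = (g_1\cont{p}g_2)\otimes(h_2\cont{r}h_1)$ as an element of $L^2(\R_+^{n_1+n_2-2p})\otimes L^2(\R_+^{m_1+m_2-2r})$. I would establish this by substituting $f = g_1\otimes h_1$ and $g = g_2\otimes h_2$ into the defining integral for $\conts{p,r}$ and tracking which variable feeds which factor: the $n_1$ leftmost arguments of $f$ feed $g_1$ and the $m_1$ rightmost feed $h_1$; the $n_2$ leftmost of $g$ feed $g_2$ and the $m_2$ rightmost feed $h_2$; hence the $p$ integration variables $s_1,\ldots,s_p$ couple $g_1$ only with $g_2$ and the $r$ variables $y_1,\ldots,y_r$ couple $h_1$ only with $h_2$, so the integrand factors as a tensor product. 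After the harmless relabelings $s_i\mapsto s_{p+1-i}$ and $y_j\mapsto y_{r+1-j}$ the first factor is exactly $g_1\cont{p}g_2$ and the second exactly $h_2\cont{r}h_1$, with the surviving $t$-arguments of the two factors occurring consecutively and in the correct order. The main obstacle is purely bookkeeping: aligning the explicit index strings in the definition of $\conts{p,r}$ with those in the definition of $\cont{p}$, where a single off-by-one slip would break the identification. This matching (reminiscent of the identities collected in Remark~\ref{remarkbicont}) is, I expect, the only place where real care is needed; everything else is substitution plus the two formulas \eqref{sharpdef} and \eqref{productformula}.
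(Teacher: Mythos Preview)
The paper does not prove this proposition; it is quoted from \cite{bourguin_free_2017} with the line ``For a proof of this formula, see \cite{bourguin_free_2017}.'' So there is no in-paper argument to compare against.

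Your proof is correct and is exactly the natural one: reduce to elementary tensors by bilinearity and density, unfold the $\sharp$-action (with the order reversal in the second slot coming from the opposite-algebra structure), apply the scalar product formula \eqref{productformula} in each tensor factor, and then identify $(g_1\cont{p}g_2)\otimes(h_2\cont{r}h_1)$ with $f\conts{p,r}g$ by chasing indices through the definition of the bicontraction. The kernel identification is indeed the only delicate point, and your account of it is right. This is essentially how the result is proved in the original reference as well.
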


Finally, the free gradient operator $\nabla \colon L^2\left(\mathcal{S},\varphi \right) \rightarrow \mathscr{B}_{2} $ is a densely-defined and closable operator whose action on Wigner integrals is given by
\begin{equation*}
\nabla_t I_n^S(f) = \sum_{k=1}^{n}[I_{k-1}^S \otimes I_{n-k}^S]\left(f_t^{(k)} \right),
\end{equation*}
where $f_t^{(k)}(x_1,\ldots,x_{n-1}) = f(x_1,\ldots,
x_{k-1},t,x_{k},\ldots, x_{n-1})$ is viewed as an element of
$L^2\left(\mathbb{R}_{+}^{k-1} \right) \otimes
L^2\left(\mathbb{R}_{+}^{n-k} \right)$.
We also
define the pairing $\left\langle \cdot, \cdot
\right\rangle$ between $\mathscr{B}_2 \times \mathscr{B}_2$ and $
L^2(\mathscr{A}\otimes \mathscr{A}, \varphi\otimes \varphi)$ to be 
\begin{align}
  \label{defofinnerprodint}
   \left\langle \cdot, \cdot
\right\rangle \colon \mathscr{B}_2 \times \mathscr{B}_2 & \mapsto
L^2(\mathscr{A}\otimes \mathscr{A}, \varphi\otimes \varphi) \notag \\
   \left\langle U, V \right\rangle &=
 \int_{\R_+}^{} U_s \sharp V_s^{*}ds.
\end{align}

\section{Characterizations of freeness}
In this section, we are interested in providing several characterizations of freeness between two multiple integrals. We will derive those characterizations in terms of contractions, covariances and free Malliavin gradients respectively.
\subsection{Characterization in terms of contractions}
Recall the well-known characterization of independence of
multiple Wiener-It\^o integrals by {\"U}st{\"u}nel and Zakai \cite{ustunel_independence_1989} in terms of the first contraction of the associated kernels.
\begin{theorem}[{\"U}st{\"u}nel and Zakai \cite{ustunel_independence_1989}, 1989]
\label{ustuzakaitheorem}
Let $n,m$ be natural numbers and let $f \in
L^2\left(\mathbb{R}_{+}^{n} \right) $ and $g \in
L^2\left(\mathbb{R}_{+}^{m} \right) $ be symmetric functions. Then,
$I_{n}^{W}\left(f \right)$ and $I_{m}^{W}\left(g \right)$ are
independent if and only if $f \otimes_{1} g = 0$ almost everywhere
(for the definition of $\otimes_1$, see the first point of Remark \ref{rrr} below).
\end{theorem}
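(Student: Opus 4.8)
\emph{Proof proposal.} The plan is to reduce both implications to a property of the \emph{support spaces} of the two kernels, and I expect the direction ``independence $\Rightarrow f\otimes_{1}g=0$'' to be the delicate one. For a symmetric $f\in L^{2}(\R_{+}^{n})$, let $\mathcal H_{f}\subseteq L^{2}(\R_{+})$ be the closed linear span of the functions $t\mapsto\int_{\R_{+}^{n-1}}f(t,s_{1},\dots,s_{n-1})\phi(s_{1},\dots,s_{n-1})\,ds_{1}\cdots ds_{n-1}$, $\phi\in L^{2}(\R_{+}^{n-1})$ (by symmetry of $f$ this does not depend on which of the $n$ slots is left free). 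A Fubini computation first shows, for symmetric $f$ and $g$, that
\[
f\otimes_{1}g=0\ \text{a.e.}\qquad\Longleftrightarrow\qquad\mathcal H_{f}\perp\mathcal H_{g}\ \text{in }L^{2}(\R_{+}),
\]
and, the same way, that $f$ belongs to the symmetric $n$-th tensor power of $\mathcal H_{f}$ (and $g$ to that of $\mathcal H_{g}$).

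\emph{Sufficiency.} Assume $f\otimes_{1}g=0$, so $\mathcal H_{f}\perp\mathcal H_{g}$. Fix orthonormal bases $(e_{i})_{i}$ of $\mathcal H_{f}$ and $(e_{j}')_{j}$ of $\mathcal H_{g}$; since all cross-products $\langle e_{i},e_{j}'\rangle$ vanish and the family $\{W(e_{i})\}_{i}\cup\{W(e_{j}')\}_{j}$ (with $W(h)=I_{1}^{W}(h)$) is jointly Gaussian, the two sub-families are independent, hence $\sigma(W(e_{i}):i)$ and $\sigma(W(e_{j}'):j)$ are independent $\sigma$-algebras. As $f$ lies in the symmetric tensor power of $\mathcal H_{f}$, the variable $I_{n}^{W}(f)$ is an $L^{2}$-limit of polynomials in the $W(e_{i})$, hence is $\sigma(W(e_{i}):i)$-measurable; likewise $I_{m}^{W}(g)$ is $\sigma(W(e_{j}'):j)$-measurable. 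Therefore $I_{n}^{W}(f)$ and $I_{m}^{W}(g)$ are independent.

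\emph{Necessity.} Assume independence. Then $\E[(I_{n}^{W}(f)I_{m}^{W}(g))^{2}]=\E[I_{n}^{W}(f)^{2}]\,\E[I_{m}^{W}(g)^{2}]=n!\,m!\,\norm{f}^{2}\norm{g}^{2}$. On the other hand, the product formula for multiple Wiener integrals together with the orthogonality of Wiener chaoses gives
\[
\E[(I_{n}^{W}(f)I_{m}^{W}(g))^{2}]=\sum_{r=0}^{n\wedge m}\Bigl(r!\tbinom{n}{r}\tbinom{m}{r}\Bigr)^{2}(n+m-2r)!\,\norm{f\,\widetilde\otimes_{r}\,g}^{2}\ \geq\ (n+m)!\,\norm{f\,\widetilde\otimes\,g}^{2},
\]
keeping only the $r=0$ term, $f\,\widetilde\otimes\,g$ being the symmetrization of $f\otimes g$. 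The crucial step is the purely Hilbert-space identity
\[
(n+m)!\,\norm{f\,\widetilde\otimes\,g}^{2}-n!\,m!\,\norm{f}^{2}\norm{g}^{2}=n!\,m!\sum_{\substack{S\subseteq\{1,\dots,n+m\},\ |S|=n\\ S\neq\{1,\dots,n\}}}\bigl\langle f\otimes_{k(S)}f,\ g\otimes_{m-n+k(S)}g\bigr\rangle,\quad k(S):=|S\cap\{1,\dots,n\}|,
\]
obtained by writing $f\,\widetilde\otimes\,g=\tfrac{n!\,m!}{(n+m)!}\sum_{S}(f\otimes g)_{S}$, where $(f\otimes g)_{S}$ places the variables of $f$ in the slots indexed by $S$, and relabelling. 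Each summand on the right is the inner product of two nonnegative-definite ``block'' kernels of the shape $h\otimes_{p}h$, hence is $\geq0$; thus the left-hand side is $\geq0$, and combining the three displays forces all inequalities to be equalities. Equality in the last display makes every summand vanish, and for $S$ with $k(S)=n-1$ (such $S$ exist since $n,m\geq1$) this reads $\langle f\otimes_{n-1}f,\,g\otimes_{m-1}g\rangle=\norm{f\otimes_{1}g}^{2}=0$, i.e. $f\otimes_{1}g=0$ a.e.

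The step I expect to be the main obstacle is precisely this last one: one must argue that vanishing of $\E[(I_{n}^{W}(f)I_{m}^{W}(g))^{2}]-\E[I_{n}^{W}(f)^{2}]\E[I_{m}^{W}(g)^{2}]$ forces the \emph{un}symmetrized contraction $f\otimes_{1}g$ itself to be zero, not merely a symmetrized surrogate (symmetrization would otherwise lose information), and this is exactly what the nonnegativity of the block-kernel inner products secures. Should one prefer, this implication is the original contribution of \cite{ustunel_independence_1989} and may simply be quoted from there.
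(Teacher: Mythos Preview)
The paper does not give a proof of this statement: Theorem~\ref{ustuzakaitheorem} is quoted as a classical result of \"Ust\"unel and Zakai \cite{ustunel_independence_1989}, and the paper's own contribution is its free analogue, Theorem~\ref{freeustuzakaiwignercase}. There is therefore no proof in the paper to compare against.

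That said, your argument is correct. For the sufficiency direction your support-space reduction is standard and clean. For the necessity direction, your combinatorial identity is equivalent to the well-known expansion
\[
(n+m)!\,\norm{f\,\widetilde\otimes\,g}^{2}=n!\,m!\sum_{r=0}^{n\wedge m}\binom{n}{r}\binom{m}{r}\norm{f\otimes_{r}g}^{2},
\]
and your nonnegativity claim is justified because $\langle f\otimes_{k}f,\,g\otimes_{m-n+k}g\rangle=\norm{f\otimes_{n-k}g}^{2}$ by Fubini (equivalently, it is the Hilbert--Schmidt trace of a product of two positive operators). It is worth noting that the paper, in proving the free analogue, uses a closely related but slightly different route for this implication: it computes $\operatorname{Cov}\bigl(I_{n}(f)^{2},I_{m}(g)^{2}\bigr)$ directly via the product formula and isometry, obtaining a sum of squared contraction norms (equation~\eqref{expressionofcovarianceofthesquares}); this avoids the symmetrization step entirely and is perhaps more transparent. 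Your approach via $\E[(I_{n}^{W}(f)I_{m}^{W}(g))^{2}]$ and the lower bound through the $r=0$ term reaches the same conclusion with a little more bookkeeping.
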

\begin{remark}\label{rrr}
{\rm
\begin{itemize}
\item In Theorem \ref{ustuzakaitheorem} and throughout the text, the notation
$\otimes_r$ stands for the usual $r$th contraction operator, defined as follows:
if $f\in L^2(\R_+^n)$ and $g\in L^2(\R_+^m)$ are symmetric and if $r\in\{1,\ldots,n\wedge m\}$, we set
\begin{eqnarray*}
&&(f\otimes_r g)(t_1,\ldots,t_{n+m-2r})=\int_{\R_+^r} f(t_1,\ldots,t_{n-r},x_1,\ldots,x_r)\\
&&\hskip3.8cm\times g(t_{n-r+1},\ldots,t_{n+m-2r},x_1,\ldots,x_r)dx_1\ldots dx_r.
\end{eqnarray*}
\item In the context of a multiple Wiener-It\^o integral $I_n^W (f)$, note
that one can always assume without loss of generality that the kernel
$f$ is symmetric, as $I_n^W(f) = I_n^W (\tilde{f})$, where $\tilde{f}$ denotes the symmetrization of the function $f$ given
by 
\begin{equation*}
\tilde{f} \left( x_1, \ldots, x_n \right) = \frac{1}{n!}\sum_{\sigma
  \in \frak{S}_n}^{}f \left( x_{\sigma(1)}, \ldots, x_{\sigma(n)} \right),
\end{equation*}
with $\frak{S}_n$ the symmetric group of $\left\{ 1, \ldots, n \right\}$.
\end{itemize}
}
\end{remark}

A natural question is to ask whether or not the
characterization of independence of {\"U}st{\"u}nel and Zakai has a
counterpart in the free setting. It turns out that a similar
characterization of freeness holds on both the Wigner and the free
Poisson space, which is the first result of this paper.
\begin{theorem}
\label{freeustuzakaiwignercase}
Let $n,m$ be natural numbers and let $f \in L^2\left(\mathbb{R}_{+}^{n} \right) $ and $g \in L^2\left(\mathbb{R}_{+}^{m} \right) $ be symmetric functions. Then, 
\begin{enumerate}[(i)]
\item $I_{n}^{S}\left(f \right)$ and $I_{m}^{S}\left(g \right)$ are free if and only if $f \cont{1} g = 0$ almost everywhere.
\item $I_{n}^{\hat{N}}\left(f \right)$ and $I_{m}^{\hat{N}}\left(g \right)$ are free if and only if $f \star_{1}^{0} g = 0$ almost everywhere.
\end{enumerate}
\end{theorem}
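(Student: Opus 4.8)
The plan is to reduce freeness to a statement about moments (via Definition~\ref{deffreeness}), and then use the product formulas~\eqref{productformula} and~\eqref{productformulafreepoisson} together with the Wigner isometry~\eqref{wigneriso} to translate those moment conditions into the vanishing of the relevant contraction. Let me focus on~(i); part~(ii) runs in parallel with the star contraction playing the role of the nested one.

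First I would establish the easy ("only if") direction. Suppose $X = I_n^S(f)$ and $Y = I_m^S(g)$ are free. Freeness gives in particular that the centered variables $X - \varphi(X)\mathbf{1}$ and $Y-\varphi(Y)\mathbf{1}$ satisfy $\varphi\bigl((X-\varphi(X))(Y-\varphi(Y))(X-\varphi(X))(Y-\varphi(Y))\bigr)=0$ and, more simply, $\varphi\bigl((X-\varphi(X))^2(Y-\varphi(Y))^2\bigr) = \varphi((X-\varphi(X))^2)\,\varphi((Y-\varphi(Y))^2)$ (this is the defining alternating-moment condition applied to $P_1 = P_3 = $ the relevant monomial in $X$, $P_2 = P_4$ in $Y$, after recentering; alternatively it is a direct consequence of freeness of $X,Y$). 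Assuming $n,m\geq 1$ the integrals are already centered, so this reads $\varphi(X^2Y^2) = \varphi(X^2)\varphi(Y^2)$. Now I expand $X^2$ and $Y^2$ with~\eqref{productformula}: $X^2 = \sum_{p=0}^n I_{2n-2p}^S(f\cont{p}f)$ and similarly for $Y^2$, then use orthogonality of integrals of different orders and the isometry~\eqref{wigneriso} to compute both sides explicitly. The cross term that survives beyond the product $\varphi(X^2)\varphi(Y^2)$ is exactly $\langle f\cont{1}f,\ g\cont{1}g\rangle$ up to combinatorial constants; more to the point, a cleaner route is to compute $\varphi(XYXY)$, whose expansion via~\eqref{productformula} yields a term proportional to $\|f\cont{1}g\|^2_{L^2(\mathbb{R}_+^{n+m-2})}$, and freeness forces this mixed moment to equal $\varphi(X^2)\varphi(Y^2)$ (for free centered $X,Y$ one has $\varphi(XYXY)=\varphi(X^2)\varphi(Y^2)$ when... actually $\varphi(XYXY)=0$ here, even cleaner). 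Matching, one concludes $\|f\cont{1}g\|^2 = 0$, i.e. $f\cont{1}g = 0$ a.e.

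For the converse ("if") direction—which I expect to be the main obstacle—I would show that $f\cont{1}g=0$ a.e. implies \emph{all} alternating centered moments vanish, not merely the low-order one. The key structural observation is that $f\cont{1}g=0$ propagates: using the product formula to expand a word $P_1(X)P_2(Y)\cdots$, every factor of the form $I_a^S(\phi)I_b^S(\psi)$ appearing across an $X$–$Y$ junction decomposes into contractions $\phi\cont{p}\psi$, and I would argue that $\phi$ is built from $f$ by contracting/tensoring with copies of $f$ (hence shares with $f$ the property that its "last variable block" only couples to $g$'s "first variable block" trivially) and similarly $\psi$ from $g$, so that $\phi\cont{p}\psi$ for $p\geq 1$ reduces to repeated application of $f\cont{1}g = 0$. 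Combined with the fact that $\varphi$ kills every integral of order $\geq 1$, so only the fully-contracted (order-zero) scalar terms survive, and these scalar terms factor through $\varphi$ on $X$-words and $\varphi$ on $Y$-words separately precisely because no nontrivial $X$–$Y$ contraction occurs, one gets the alternating moments to telescope to $0$. Making the bookkeeping of "which variables can contract with which" precise—ideally via an auxiliary lemma stated in Section~3 or Section~7, phrased for symmetric kernels so that the position of the integrated variable is immaterial—is the technical heart; the diagrammatic/combinatorial description of iterated product-formula expansions is where the real work lies. For part~(ii) the same scheme applies, with the caveat that~\eqref{productformulafreepoisson} contains the extra star-contraction sum, so one must additionally check that $f\star_1^0 g = 0$ controls those terms too; here Remark~\ref{remarkbicont}-type identities relating star contractions are not directly available, so one argues directly that $f\star_p^{p-1}g$ for $p\geq 1$ likewise reduces to $f\star_1^0 g$ by the symmetry of $f$ and $g$.
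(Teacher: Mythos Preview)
Your plan is essentially the paper's proof. For the ``only if'' direction the paper uses exactly your first suggestion: from freeness one has $\varphi(X^2Y^2)=\varphi(X^2)\varphi(Y^2)$, and expanding both sides with the product formula and the isometry yields
\[
\varphi(X^2Y^2)-\varphi(X^2)\varphi(Y^2)=\sum_{p=1}^{n\wedge m}\|f\cont{p}g\|^2
\ \bigl(+\ \mathds{1}_{\{\frak{M}=\hat N\}}\sum_{p=1}^{n\wedge m}\|f\smcont{p}g\|^2\bigr),
\]
so in particular $f\cont{1}g=0$ (resp.\ $f\star_1^0 g=0$). Your alternative via $\varphi(XYXY)=0$ is not as clean as you suggest: the expansion gives $\sum_p\langle f\cont{p}g,(f\cont{p}g)^*\rangle=\sum_p\langle f\cont{p}g,\,g\cont{p}f\rangle$, and for symmetric $f,g$ one has $(f\cont{p}g)^*=g\cont{p}f\neq f\cont{p}g$ in general, so the terms are not squared norms and need not be nonnegative. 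Stick with the $\varphi(X^2Y^2)$ route.

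For the ``if'' direction your outline is right and matches the paper, but the phrasing about ``scalar terms factoring through $\varphi$'' is muddled. The actual mechanism is simpler: one first writes each \emph{centered} factor $I_n^{\frak M}(f)^{k}-\varphi(I_n^{\frak M}(f)^{k})$ as a sum $\sum_{r\geq 1} I_r^{\frak M}(a_r(f))$ of integrals of strictly positive order (and similarly for $g$); then, because every variable of $a_r(f)$ is a variable of some copy of $f$ and symmetry makes the position immaterial, the hypothesis $f\cont{1}g=0$ forces every nested (and, in the Poisson case, star) contraction across each $X$--$Y$ junction to vanish. Only the $p=0$ tensor products survive, so the whole alternating product is a single integral of order $r_1+\cdots+r_{2\ell}\geq 2\ell>0$, and applying $\varphi$ gives $0$ directly---no scalar terms ever appear. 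The propagation ``$f\cont{1}g=0\Rightarrow f\cont{p}g=0$'' and ``$f\star_1^0 g=0\Rightarrow f\cont{p}g=0$ and $f\smcont{r}g=0$'' that you anticipate needing is exactly the content of the auxiliary Lemmas in Section~7.
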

\begin{proof}

First, assume that $I_{n}^{\frak{M}}\left(f \right)$ and $I_{m}^{\frak{M}}\left(g \right)$ are free. Then, by Definition \ref{deffreeness}, it holds that, in particular
\begin{align*}
& \varphi\left( \left[I_{n}^{\frak{M}}\left(f \right)^2 -
    \varphi\left( I_{n}^{\frak{M}}\left(f \right)^2\right)
  \right] \left[I_{m}^{\frak{M}}\left(g \right)^2 - \varphi\left(
      I_{m}^{\frak{M}}\left(g \right)^2\right)   \right]\right) \\
&\qquad\qquad\qquad\qquad = \varphi\left(I_{n}^{\frak{M}}\left(f
  \right)^2I_{m}^{\frak{M}}\left(g \right)^2\right) -
\varphi\left(I_{n}^{\frak{M}}\left(f
  \right)^2\right)\varphi\left(I_{m}^{\frak{M}}\left(g
  \right)^2\right)= 0.
\end{align*}
Observe that 
\begin{align*}
&\varphi\left(I_{n}^{\frak{M}}\left(f \right)^2 I_{m}^{\frak{M}}\left(g \right)^2\right) = \sum_{p=0}^{n}\sum_{r=0}^{m}\varphi\left( I_{2n-2p}^{\frak{M}}\left(f \cont{p} f \right)I_{2m-2r}^{\frak{M}}\left(g \cont{r} g \right)\right) 
\\
&\quad + \mathds{1}_{\left\lbrace \frak{M} = \hat{N}\right\rbrace }\sum_{p=1}^{n}\sum_{r=1}^{m}\varphi\left( I_{2n-2p+1}^{\frak{M}}\left(f \smcont{p} f \right)I_{2m-2r+1}^{\frak{M}}\left(g \smcont{r} g \right)\right)
\\
&= \sum_{p=0}^{n}\sum_{r=0}^{m}\varphi\left( I_{2p}^{\frak{M}}\left(f \cont{n-p} f \right)I_{2r}^{\frak{M}}\left(g \cont{m-r} g \right)\right) \\
& \quad + \mathds{1}_{\left\lbrace \frak{M} = \hat{N}\right\rbrace
}\sum_{p=0}^{n-1}\sum_{r=0}^{m-1}\varphi\left(
  I_{2p+1}^{\frak{M}}\left(f \smcont{n-p} f
  \right)I_{2r+1}^{\frak{M}}\left(g \smcont{m-r} g \right)\right)
  .
\end{align*}
Using the isometry property \eqref{wigneriso}, we get
\begin{align*}
&\varphi\left(I_{n}^{\frak{M}}\left(f \right)^2
  I_{m}^{\frak{M}}\left(g \right)^2\right) = \sum_{p=0}^{n \wedge m} \left\langle f \cont{n-p} f , g \cont{m-p}
  g \right\rangle_{L^2 \left( \R_+^{2p} \right)} \\
& \qquad\qquad\qquad\qquad + \mathds{1}_{\left\lbrace \frak{M} = \hat{N}\right\rbrace }\sum_{p=0}^{(n \wedge m) -1} \left\langle f \smcont{n-p} f , g \smcont{m-p} g \right\rangle_{L^2 \left( \R_+^{2p+1} \right)} 
\\
&= \sum_{p=0}^{n \wedge m}\norm{f \cont{p} g}_{L^2 \left( \R_+^{n+m-2p} \right)}^{2} + \mathds{1}_{\left\lbrace \frak{M} = \hat{N}\right\rbrace }\sum_{p=1}^{n \wedge m}\norm{f \smcont{p} g}_{L^2 \left( \R_+^{n+m-2p+1} \right)}^{2} \\
&= \norm{f}_{L^2 \left( \R_+^{n} \right)}^{2}\norm{g}_{L^2 \left(
    \R_+^{m} \right)}^{2} + \sum_{p=1}^{n \wedge m} \norm{f \cont{p}
  g}_{L^2 \left( \R_+^{n+m-2p} \right)}^{2} \\
& \qquad\qquad\qquad\qquad\qquad\qquad+ \mathds{1}_{\left\lbrace \frak{M} = \hat{N}\right\rbrace }\sum_{p=1}^{n \wedge m}\norm{f \smcont{p} g}_{L^2 \left( \R_+^{n+m-2p+1} \right)}^{2}.
\end{align*}
Recalling that $ \varphi\left(I_{n}^{\frak{M}}\left(f \right)^2 \right) = \norm{f}_{L^2 \left( \R_+^{n} \right)}^{2}$ and $ \varphi\left(I_{m}^{\frak{M}}\left(g \right)^2 \right) = \norm{g}_{L^2 \left( \R_+^{m} \right)}^{2}$ yields
\begin{align}
\label{expressionofcovarianceofthesquares}
& \varphi\left(I_{n}^{\frak{M}}\left(f \right)^2
  I_{m}^{\frak{M}}\left(g \right)^2\right) -
\varphi\left(I_{n}^{\frak{M}}\left(f
  \right)^2\right)\varphi\left(I_{m}^{\frak{M}}\left(g
  \right)^2\right) = \sum_{p=1}^{n \wedge m} \norm{f \cont{p}
  g}_{L^2 \left( \R_+^{n+m-2p} \right)}^{2} \notag \\
&\qquad\qquad\qquad\qquad\qquad\qquad\qquad\qquad + \mathds{1}_{\left\lbrace \frak{M} = \hat{N}\right\rbrace }\sum_{p=1}^{n \wedge m}\norm{f \smcont{p} g}_{L^2 \left( \R_+^{n+m-2p+1} \right)}^{2}.
\end{align}
As the left-hand side of the above equality is zero, the fact that $f \cont{1} g = 0$ a.e. in the Wigner case and $f \star_{1}^{0} g = 0$ a.e. in the free Poisson case follows.

Conversely, assume that $f \cont{1} g = 0$ a.e. in the Wigner case and that $f \star_{1}^{0} g = 0$ a.e. in the free Poisson case. According to Definition \ref{deffreeness} together with the linearity of the functional $\varphi$, we must prove that, for any natural number $\ell$ and for any natural numbers $k_1, \ldots, k_{2\ell}$,
\begin{eqnarray*}
&& \varphi\left( \left[I_{n}^{\frak{M}}\left(f \right)^{k_1} - \varphi\left( I_{n}^{\frak{M}}\left(f \right)^{k_1}\right)   \right]\left[I_{m}^{\frak{M}}\left(g \right)^{k_2} - \varphi\left( I_{m}^{\frak{M}}\left(g \right)^{k_2}\right)   \right] \right.  \\
&& \left. \qquad \cdots \left[I_{n}^{\frak{M}}\left(f \right)^{k_{2\ell-1}} - \varphi\left( I_{n}^{\frak{M}}\left(f \right)^{k_{2\ell-1}}\right)   \right]\left[I_{m}^{\frak{M}}\left(g \right)^{k_{2\ell}} - \varphi\left( I_{m}^{\frak{M}}\left(g \right)^{k_{2\ell}}\right)   \right] \right) =0.
\end{eqnarray*}
\begin{remark}
Observe that we only consider an even number of powers $k$. This comes from the tracial property of the functional $\varphi$ together with the condition that no two adjacent indices $i_j$ can be equal in Definition \ref{deffreeness}. Indeed, if we consider an odd number of powers $k$, we would have
\begin{align*}
& \varphi\left( \left[I_{n}^{\frak{M}}\left(f \right)^{k_1} -
    \varphi\left( I_{n}^{\frak{M}}\left(f \right)^{k_1}\right)
  \right]\left[I_{m}^{\frak{M}}\left(g \right)^{k_2} - \varphi\left(
      I_{m}^{\frak{M}}\left(g \right)^{k_2}\right)\right]\right. \\
& \left. \qquad\qquad\qquad\qquad\qquad\qquad\qquad\qquad \cdots \left[I_{n}^{\frak{M}}\left(f \right)^{k_{2\ell+1}} - \varphi\left( I_{n}^{\frak{M}}\left(f \right)^{k_{2\ell+1}}\right)   \right] \right) \\
&= \varphi\left( \left[I_{n}^{\frak{M}}\left(f \right)^{k_{2\ell+1}} - \varphi\left( I_{n}^{\frak{M}}\left(f \right)^{k_{2\ell+1}}\right)   \right]\left[I_{n}^{\frak{M}}\left(f \right)^{k_1} - \varphi\left( I_{n}^{\frak{M}}\left(f \right)^{k_1}\right)   \right] \right.  \\
& \left.  \qquad\qquad\qquad\quad \left[I_{m}^{\frak{M}}\left(g \right)^{k_2} - \varphi\left( I_{m}^{\frak{M}}\left(g \right)^{k_2}\right)   \right] \cdots \left[I_{m}^{\frak{M}}\left(g \right)^{k_{2\ell}} - \varphi\left( I_{m}^{\frak{M}}\left(g \right)^{k_{2\ell}}\right)   \right]  \right),
\end{align*}
where the first two indices would be the same in the framework of Definition \ref{deffreeness}.
\end{remark}
Let $q < k$ be two non--negative integers. For $0 \leq q \leq k-1$, define the multisets $S_{q}^{k} = \left\lbrace 1,\ldots ,1,0,\ldots ,0 \right\rbrace$ where the element $1$ has multiplicity $q$ and the element $0$ has multiplicity $k-q-1$. Such a set is sometimes denoted $\left\lbrace (1,q),(0,k-q-1) \right\rbrace $. We denote the group of permutations of the multiset $S_{q}^{k}$ by $\mathfrak{S}_{q}^{k}$ and its cardinality is given by the multinomial coefficient $\binom{k-1}{q,m-q-1} = \frac{(k-1)!}{q!(k-q-1)!} = \binom{k-1}{q}$. Observe that in the definition of the group of permutations of a multiset, each permutation yields a different ordering of the elements of the multiset, which is why the cardinality of $\mathfrak{S}_{q}^{k}$ is $\binom{k-1}{q}$ and not $(k-1)!$. Using the Wigner and free Poisson product formulas along with Equation (4.1) in \cite{nourdin_poisson_2013} and Lemma 4.1 in \cite{bourguin_poisson_2015}, we can write
\begin{equation*}
I_{n}^{\frak{M}}\left(f \right)^{k} = \varphi\left(I_{n}^{\frak{M}}\left(f \right)^{k}  \right) + \sum_{r=1}^{kn}I_{r}^{\frak{M}}\left(a_r(f)\right) + \mathds{1}_{\left\lbrace \frak{M} = \hat{N}\right\rbrace }\sum_{r=1}^{kn}I_{r}^{\frak{M}}\left(b_r(f)\right), 
\end{equation*}
where 
\begin{equation*}
a_r(f) = \sum_{\left(p_1, \ldots , p_{k-1} \right) \in A_r }\left( \cdots \left( \left( f \cont{p_1} f\right) \cont{p_2} f\right) \cdots f\right) \cont{p_{k-1}} f
\end{equation*}
with 
\begin{equation*}
A_r = \left\lbrace \left(p_1, \ldots , p_{k-1} \right) \in \left\lbrace 0,1,\ldots,n\right\rbrace^{k-1} \colon kn -2\sum_{i}^{k-1}p_i =r \right\rbrace 
\end{equation*}
and where (recall Definition \ref{defcontractions2} for the contractions appearing below)
\begin{align*}
& b_r(f) = \sum_{q=1}^{k-1}\sum_{\pi \in
  \frak{S}_{q}^{k}}\sum_{\left(p_1, \ldots , p_{k-1} \right) \in
  B_{r,q}^{\pi} }\left( \cdots \left( \left( f \star_{p_{1}}^{p_{1} -
        \pi(1)} f\right) \star_{p_{2}}^{p_{2} - \pi(2)} f\right)
  \cdots  f\right) \\
& \qquad\qquad\qquad\qquad\qquad\qquad\qquad\qquad\qquad\qquad\qquad\qquad\qquad\star_{p_{k-1}}^{p_{k-1} - \pi(k-1)} f
\end{align*}
with, for each $q = 1,\ldots, k-1$ and each $\pi \in \frak{S}_{q}^{k}$,
\begin{equation*}
B_{r,q}^{\pi} = \left\lbrace \left(p_1, \ldots , p_{k-1} \right) \in \bigotimes_{s=1}^{k-1} \left\lbrace \pi(s),\ldots,n \right\rbrace  \colon kn +q - 2\sum_{i}^{k-1}p_i =r \right\rbrace .
\end{equation*}
We get that 
\begin{align*}
& \left[I_{n}^{\frak{M}}\left(f \right)^{k_1} - \varphi\left( I_{n}^{\frak{M}}\left(f \right)^{k_1}\right)   \right]\left[I_{m}^{\frak{M}}\left(g \right)^{k_2} - \varphi\left( I_{m}^{\frak{M}}\left(g \right)^{k_2}\right)   \right] \\
& \qquad \cdots \left[I_{n}^{\frak{M}}\left(f \right)^{k_{2\ell-1}} - \varphi\left( I_{n}^{\frak{M}}\left(f \right)^{k_{2\ell-1}}\right)   \right]\left[I_{m}^{\frak{M}}\left(g \right)^{k_{2\ell}} - \varphi\left( I_{m}^{\frak{M}}\left(g \right)^{k_{2\ell}}\right)   \right]  \\
&= \sum_{r_{1} =1}^{k_{1} n}\sum_{r_{2} =1}^{k_{2} m}\cdots
\sum_{r_{2\ell -1} =1}^{k_{2\ell -1} n}\sum_{r_{2\ell} =1}^{k_{2\ell}
  m}I_{r_{1}}^{\frak{M}}\left(a_{r_{1}}(f) +  \mathds{1}_{\left\lbrace
      \frak{M} = \hat{N}\right\rbrace } b_{r_{1}}(f)\right) \\
& \qquad\qquad I_{r_{2}}^{\frak{M}}\left(a_{r_{2}}(g) +  \mathds{1}_{\left\lbrace
      \frak{M} = \hat{N}\right\rbrace } b_{r_{2}}(g)\right) \cdots
I_{r_{2\ell-1}}^{\frak{M}}\left(a_{r_{2\ell-1}}(f) +
  \mathds{1}_{\left\lbrace \frak{M} = \hat{N}\right\rbrace }
  b_{r_{2\ell-1}}(f)\right) \\
& \qquad\qquad\qquad\qquad\qquad\qquad\qquad\qquad\qquad\qquad\quad I_{r_{2\ell}}^{\frak{M}}\left(a_{r_{2\ell}}(g) +  \mathds{1}_{\left\lbrace \frak{M} = \hat{N}\right\rbrace } b_{r_{2\ell}}(g)\right).
\end{align*}
At this point, observe that the assumptions that $f\cont{1} g =0$ a.e
in the Wigner case and $f \star_{1}^{0} g =0$ a.e in the free Poisson
case imply, by Lemma \ref{lemmacont1impliescontp} and Lemma \ref{lemmacont1impliescontpandcontstarp} respectively, that for any given $ i = 1, \ldots, 2\ell -1$, the contractions between 
$\left(a_{r_{i}}(f) +  \mathds{1}_{\left\lbrace \frak{M} =
      \hat{N}\right\rbrace } b_{r_{i}}(f)\right)$ and
$\left(a_{r_{i+1}}(g) +  \mathds{1}_{\left\lbrace \frak{M} =
      \hat{N}\right\rbrace } b_{r_{i+1}}(g)\right)$ resulting from
using the appropriate product formula iteratively will all be zero
a.e. except for the ones of order zero corresponding to the tensor
product operation (it is the only contraction that can be non-zero
under both the Wigner and free Poisson case assumptions).
\begin{remark}
Note that for the above argument to hold, we need to assume that the functions
$f$ and $g$ are symmetric in order to be able to freely reorder
variables appearing in the contractions of $a_{r_i}(f)$ and
$a_{r_j}(g)$ (as well as in the contractions of $b_{r_{i+1}}(f)$ and
$b_{r_{j+1}}(g)$) so that the assumptions $f \cont{1} g = 0$ a.e. in
the Wigner case and $f \star_{1}^{0} g = 0$ a.e. in the free Poisson
case can be used to deduce that the resulting contractions will all be zero.
\end{remark}
Hence, keeping only the non-zero terms in the above expression yields
\begin{align*}
& \left[I_{n}^{\frak{M}}\left(f \right)^{k_1} - \varphi\left( I_{n}^{\frak{M}}\left(f \right)^{k_1}\right)   \right]\left[I_{m}^{\frak{M}}\left(g \right)^{k_2} - \varphi\left( I_{m}^{\frak{M}}\left(g \right)^{k_2}\right)   \right] \\
& \qquad \cdots \left[I_{n}^{\frak{M}}\left(f \right)^{k_{2\ell-1}} - \varphi\left( I_{n}^{\frak{M}}\left(f \right)^{k_{2\ell-1}}\right)   \right]\left[I_{m}^{\frak{M}}\left(g \right)^{k_{2\ell}} - \varphi\left( I_{m}^{\frak{M}}\left(g \right)^{k_{2\ell}}\right)   \right]  \\
&= \sum_{r_{1} =1}^{k_{1} n}\sum_{r_{2} =1}^{k_{2} m}\cdots
\sum_{r_{2\ell -1} =1}^{k_{2\ell -1} n}\sum_{r_{2\ell} =1}^{k_{2\ell}
  m}I_{r_{1}+ \cdots + r_{2\ell}}^{\frak{M}}\left(\left(a_{r_{1}}(f) +
    \mathds{1}_{\left\lbrace \frak{M} = \hat{N}\right\rbrace }
    b_{r_{1}}(f)\right) \right. \\
& \left. \qquad \otimes\left(a_{r_{2}}(g) +  \mathds{1}_{\left\lbrace
        \frak{M} = \hat{N}\right\rbrace } b_{r_{2}}(g)\right)\otimes
  \cdots \otimes \left(a_{r_{2\ell-1}}(f) +  \mathds{1}_{\left\lbrace
        \frak{M} = \hat{N}\right\rbrace } b_{r_{2\ell-1}}(f)\right)
\right. \\
& \left. \qquad\qquad\qquad\qquad\qquad\qquad\qquad\qquad\qquad\qquad \otimes \left(a_{r_{2\ell}}(g) +  \mathds{1}_{\left\lbrace \frak{M} = \hat{N}\right\rbrace } b_{r_{2\ell}}(g)\right)\right).
\end{align*}
As the quantity $r_{1}+ \cdots + r_{2\ell}$ is strictly positive, applying $\varphi$ to the above expression yields 
\begin{align*}
& \varphi\left( \left[I_{n}^{\frak{M}}\left(f \right)^{k_1} - \varphi\left( I_{n}^{\frak{M}}\left(f \right)^{k_1}\right)   \right]\left[I_{m}^{\frak{M}}\left(g \right)^{k_2} - \varphi\left( I_{m}^{\frak{M}}\left(g \right)^{k_2}\right)   \right] \right.  \\
& \left.  \qquad\quad \cdots \left[I_{n}^{\frak{M}}\left(f \right)^{k_{2\ell-1}} - \varphi\left( I_{n}^{\frak{M}}\left(f \right)^{k_{2\ell-1}}\right)   \right]\left[I_{m}^{\frak{M}}\left(g \right)^{k_{2\ell}} - \varphi\left( I_{m}^{\frak{M}}\left(g \right)^{k_{2\ell}}\right)   \right] \right) =0,
\end{align*}
which is the desired result.
\end{proof}

Observe that the above characterization of freeness is stated and
proven for symmetric kernels only. A natural question is whether or not
this characterization continues to hold in the more general case of a
mirror-symmetric kernel. We provide a negative answer to this
question, proving that our characterization is exhaustive. Concretely,
we will exhibit two mirror-symmetric kernels $f,g\in L^2([0,2]^3)$
such that $\|f \cont{1} g\|_{L^2([0,2]^3)}=0$ but $I_3^S(f)$ and $I_3^S(g)$ are not free.

Indeed, consider $f=\mathds{1}_{[0,1]\times[0,2]\times[0,1]}$ and $g=\mathds{1}_{[1,2]\times[0,2]\times[1,2]}$.
It is readily checked that $f\overset{1}\frown g=0$.
On the other hand, using the product formula \eqref{productformula} iteratively,
we can write
\begin{eqnarray*}
I_3^S(f)^7&=&\sum_{(r_1,\ldots,r_6)\in C} I_{21-2r_1-\ldots-2r_6}^S\bigg(
(((((f\overset{r_1}\frown f) \overset{r_2}\frown f)\overset{r_3}\frown f)\overset{r_4}\frown f)\overset{r_5}\frown f)\overset{r_6}\frown f\bigg)\\
I_3^S(g)^7&=&\sum_{(r_1,\ldots,r_6)\in C} I_{21-2r_1-\ldots-2r_6}^S\bigg((((((g\overset{r_1}\frown g) \overset{r_2}\frown g)
\overset{r_3}\frown g)\overset{r_4}\frown g)\overset{r_5}\frown g)\overset{r_6}\frown g\bigg),
\end{eqnarray*}
where 
\begin{align*}
& C=\left\{(r_1,\ldots,r_6)\in\{0,1,2,3\}^6 \colon r_2\leq 6-2r_1,
\right. \\
& \left. \qquad\qquad\qquad\qquad\qquad\quad r_3\leq
9-2r_1-2r_2,\ldots, r_6\leq 18-2r_1-\ldots-2r_5 \right\}.
\end{align*}
Using the Wigner isometry \eqref{wigneriso}, we deduce that $\varphi
\left(I_3^S(f)^7  \right)=0$ and $\varphi \left(I_3^S(g)^7 \right)=0$, as well as
(the functions $f$ and $g$ being positive)
\begin{align*}
&\varphi \left(I_3^S(f)^7I_3^S(g)^7 \right)\\
&\geq \left\langle  
(((((f\cont{2} f) \cont{2} f)\cont{1} f)\cont{1} f)\cont{1} f)\cont{3}
f,\right. \\
& \left. \qquad\qquad\qquad\qquad\qquad\qquad
(((((g\cont{2}g) \cont{2} g)\cont{1} g)\cont{1} g)\cont{1} g)\cont{3} g
\right\rangle_{L^2([0,2])}\\
&=32\neq 0.
\end{align*}
Consequently, according to the definition of freeness given in
Definition \ref{deffreeness}, $I_3^S(f)$ and $I_3^S(g)$ are not free.
\begin{remark}
The same counterexample would also yield the same conclusion in the free Poisson case
(replacing the Wigner integrals by free Poisson ones) as it is also
the case that $f \star_1^0 g=0$ and as the first
part of the free Poisson product formula
\eqref{productformulafreepoisson} is the same as the Wigner product
formula used above.
\end{remark}

However, even if establishing a characterization of freeness in terms
of contractions in the mirror-symmetric case is not possible, we can
still give a sufficient condition for freeness, which is the object of
the following result.
\begin{theorem}
\label{theofreenesscontmirrorsymcase}
Let $n,m$ be natural numbers and let $f \in
L^2\left(\mathbb{R}_{+}^{n} \right) $ and $g \in
L^2\left(\mathbb{R}_{+}^{m} \right) $ be mirror-symmetric
functions.
\begin{enumerate}[(i)]
\item
If dealing with Wigner integrals, assume that $f^{(\sigma)} \cont{1} g^{(\pi)} = 0$ almost
everywhere for all $\sigma \in \frak{S}_{n}$ and $\pi \in
\frak{S}_{m}$, where 
\begin{equation*}
f^{(\sigma)}\left( x_1, \ldots, x_n \right) = f \left( x_{\sigma(1)},
  \ldots , x_{\sigma(n)} \right),\quad x_1, \ldots, x_n \in \R_+,
\end{equation*}
and a similar definition for $g^{(\pi)}$. Then, $I_{n}^{S}\left(f
\right)$ and $I_{m}^{S}\left(g \right)$ are free.
\item
If dealing with free Poisson integrals, assume that $f^{(\sigma)} \star_1^0 g^{(\pi)} = 0$ almost
everywhere for all $\sigma \in \frak{S}_{n}$ and $\pi \in
\frak{S}_{m}$. Then, one has that $I_{n}^{\hat{N}}\left(f \right)$ and $I_{m}^{\hat{N}}\left(g \right)$ are free.
\end{enumerate}
\end{theorem}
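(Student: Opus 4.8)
The plan is to follow, essentially verbatim, the ``if'' direction of the proof of Theorem~\ref{freeustuzakaiwignercase}, the only new input being a strengthening of the combinatorial lemmas used there. Since $f$ and $g$ are mirror-symmetric, the operators $I_n^{\frak{M}}(f)$ and $I_m^{\frak{M}}(g)$ are self-adjoint, so by Definition~\ref{deffreeness} together with the linearity of $\varphi$ (and the same reduction to an even number of blocks via the tracial property) it suffices to show that, for every $\ell\geq 1$ and all naturals $k_1,\ldots,k_{2\ell}$,
\[
\varphi\Big(\big[I_n^{\frak{M}}(f)^{k_1}-\varphi(I_n^{\frak{M}}(f)^{k_1})\big]\cdots\big[I_m^{\frak{M}}(g)^{k_{2\ell}}-\varphi(I_m^{\frak{M}}(g)^{k_{2\ell}})\big]\Big)=0,
\]
the blocks alternating between powers of $I_n^{\frak{M}}(f)$ and of $I_m^{\frak{M}}(g)$. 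Iterating the product formula \eqref{productformula} (resp.\ \eqref{productformulafreepoisson}), one expands each power as $I_n^{\frak{M}}(f)^{k}=\varphi(I_n^{\frak{M}}(f)^{k})+\sum_r I_r^{\frak{M}}(a_r(f))+\mathds{1}_{\{\frak{M}=\hat N\}}\sum_r I_r^{\frak{M}}(b_r(f))$, with $a_r(f)$, $b_r(f)$ the same linear combinations of iterated nested (resp.\ nested and star) contractions of copies of $f$ as in the proof of Theorem~\ref{freeustuzakaiwignercase}, and similarly for $g$.

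Next, multiply the centered blocks together using the product formula once more; every resulting summand is $I^{\frak{M}}_{\bullet}$ applied to an iterated contraction of the kernels $a_{r_i}(f)+\mathds{1}_{\{\frak{M}=\hat N\}}b_{r_i}(f)$ and $a_{r_i}(g)+\mathds{1}_{\{\frak{M}=\hat N\}}b_{r_i}(g)$, taken in alternating order. Because $f$-built and $g$-built kernels alternate (and are merely tensored together when a $\cont{0}$ operation is applied, each $a_{r_i}$, $b_{r_i}$ having at least one argument since $r_i\geq 1$), any contraction of order $\geq 1$ occurring in this expansion pairs (by integration, or by identification in the star case) at least one argument of some copy of $f$ with at least one argument of some copy of $g$. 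The claim is that every such contraction vanishes almost everywhere. Granting it, only the summands obtained by performing exclusively $\cont{0}$ operations survive, producing $I^{\frak{M}}_{r_1+\cdots+r_{2\ell}}$ of a tensor product exactly as in the previous proof; since $r_1+\cdots+r_{2\ell}\geq 1$ and $\varphi$ annihilates $I_r^{\frak{M}}$ for $r\geq 1$, applying $\varphi$ gives $0$, as required.

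It remains to establish the claim, and this is the only place where mirror-symmetry (rather than full symmetry) intervenes. In the symmetric case the claim is precisely Lemma~\ref{lemmacont1impliescontp} (resp.\ Lemma~\ref{lemmacont1impliescontpandcontstarp}): $f\cont{1}g=0$ forces every positive-order nested (resp.\ nested/star) contraction of an $f$-built kernel with a $g$-built kernel to vanish. In the mirror-symmetric case the iterated contractions forming $a_r$, $b_r$ no longer simplify, so, tracing a given $f$-argument and $g$-argument back through the surrounding contractions, one finds that the arguments of the relevant copies of $f$ and $g$ have been reshuffled by some permutations $\sigma\in\mathfrak{S}_n$, $\pi\in\mathfrak{S}_m$; after freezing all variables not belonging to the block being integrated, the contraction factors, by Fubini, through an expression of the form $f^{(\sigma)}\cont{\ell}g^{(\pi)}$ (resp.\ $f^{(\sigma)}\star_{\ell}^{\ell-1}g^{(\pi)}$, or a mixed iterate) with $\ell\geq 1$, restricted to a lower-dimensional diagonal. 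Hence the claim follows once one knows that the hypothesis $f^{(\sigma)}\cont{1}g^{(\pi)}=0$ (resp.\ $f^{(\sigma)}\star_1^0 g^{(\pi)}=0$) for all $\sigma,\pi$ propagates to $f^{(\sigma)}\cont{\ell}g^{(\pi)}=0$ (and all star contractions $f^{(\sigma)}\star_{\ell}^{\ell-1}g^{(\pi)}=0$, etc.) for all $\sigma,\pi$ and all $\ell\geq 1$, and that this vanishing survives restriction to the relevant diagonals. The propagation is obtained by the same $L^2$-norm identities as in the proofs of Lemmas~\ref{lemmacont1impliescontp} and~\ref{lemmacont1impliescontpandcontstarp} --- expressing the squared norm of a higher contraction as a sum of squared norms of lower ones --- now carried out on each permuted copy $f^{(\sigma)}$, $g^{(\pi)}$, and the positivity of these norms is also what handles the diagonal restrictions; thus what is really needed is the mirror-symmetric analogue of those two lemmas. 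The free Poisson case is identical, using the full product formula \eqref{productformulafreepoisson} and exactly the star-contraction bookkeeping of the proof of Theorem~\ref{freeustuzakaiwignercase}. The main obstacle lies entirely in this last step: making the ``factoring through $f^{(\sigma)}\cont{\ell}g^{(\pi)}$'' precise --- identifying $\sigma$, $\pi$ and $\ell$ for each nesting pattern, and checking that no identification of variables between the $f$-block and the $g$-block is lost --- together with controlling the diagonal restrictions; everything else is a verbatim repetition of the argument for Theorem~\ref{freeustuzakaiwignercase}.
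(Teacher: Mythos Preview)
Your proposal is correct and follows precisely the route the paper takes: the paper's own proof is the single sentence ``Apply the same strategy as in the proof of Theorem~\ref{freeustuzakaiwignercase} with the stronger assumptions,'' and you have spelled out exactly what that means, correctly isolating the one new ingredient (tracking the permutations $\sigma,\pi$ that arise because $a_r(f)$, $b_r(f)$ no longer simplify for mirror-symmetric $f$). One small inaccuracy: Lemmas~\ref{lemmacont1impliescontp} and~\ref{lemmacont1impliescontpandcontstarp} do not proceed via $L^2$-norm identities but by writing $f\cont{p}g$ (resp.\ $f\smcont{r}g$) directly as an integral of $f\cont{1}g$ (resp.\ $f\star_1^0 g$) restricted to a diagonal; the permuted versions you need are obtained in exactly the same way.
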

\begin{proof}

Apply the same strategy as in the proof of Theorem
\ref{freeustuzakaiwignercase} with the stronger assumptions.
\end{proof}
\subsection{Characterization in terms of covariances}
The next result is a free analog of \cite[Corollary
5.2]{rosinski_product_1999} by Rosi{\'n}ski and Samorodnitsky, which is itself a consequence of Theorem \ref{ustuzakaitheorem} by {\"U}st{\"u}nel and Zakai.
\begin{corollary}
Let $n,m$ be natural numbers and let $f \in L^2\left(\mathbb{R}_{+}^{n} \right) $ and $g \in L^2\left(\mathbb{R}_{+}^{m} \right) $ be symmetric functions. Then, $I_{n}^{\frak{M}}\left(f \right)$ and $I_{m}^{\frak{M}}\left(g \right)$ are free if and only if their squares are uncorrelated, i.e., if and only if
\begin{equation*}
\mbox{\rm Cov}\left(I_{n}^{\frak{M}}\left(f \right)^2 , I_{m}^{\frak{M}}\left(g \right)^2\right) =0.
\end{equation*} 
\end{corollary}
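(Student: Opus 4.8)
The plan is to obtain both directions of the equivalence directly from the explicit computation of the covariance of the squares performed inside the proof of Theorem \ref{freeustuzakaiwignercase}, namely formula \eqref{expressionofcovarianceofthesquares}, combined with the statement of Theorem \ref{freeustuzakaiwignercase} itself.

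First I would observe that, since $I_n^{\frak{M}}(f)^2$ and $I_m^{\frak{M}}(g)^2$ are self-adjoint and $\varphi$ is tracial, the non-commutative covariance is simply
\begin{equation*}
\mbox{\rm Cov}\bigl(I_n^{\frak{M}}(f)^2, I_m^{\frak{M}}(g)^2\bigr) = \varphi\bigl(I_n^{\frak{M}}(f)^2 I_m^{\frak{M}}(g)^2\bigr) - \varphi\bigl(I_n^{\frak{M}}(f)^2\bigr)\varphi\bigl(I_m^{\frak{M}}(g)^2\bigr),
\end{equation*}
which is exactly the left-hand side of \eqref{expressionofcovarianceofthesquares}. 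Hence
\begin{equation*}
\mbox{\rm Cov}\bigl(I_n^{\frak{M}}(f)^2, I_m^{\frak{M}}(g)^2\bigr) = \sum_{p=1}^{n\wedge m}\norm{f \cont{p} g}_{L^2(\R_+^{n+m-2p})}^2 + \mathds{1}_{\{\frak{M}=\hat N\}}\sum_{p=1}^{n\wedge m}\norm{f \smcont{p} g}_{L^2(\R_+^{n+m-2p+1})}^2 .
\end{equation*}

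For the ``only if'' direction, if $I_n^{\frak{M}}(f)$ and $I_m^{\frak{M}}(g)$ are free, then applying Definition \ref{deffreeness} to the polynomials $P_1(X)=P_2(X)=X^2$ and the alternating index pattern $(1,2)$ yields $\varphi\bigl([I_n^{\frak{M}}(f)^2-\varphi(I_n^{\frak{M}}(f)^2)][I_m^{\frak{M}}(g)^2-\varphi(I_m^{\frak{M}}(g)^2)]\bigr)=0$, i.e. the covariance vanishes. For the ``if'' direction, if the covariance is zero then, all summands on the right-hand side above being nonnegative, each of them must vanish; in particular $\norm{f\cont{1}g}=0$ in the Wigner case and $\norm{f\smcont{1}g}=\norm{f\star_1^0 g}=0$ in the free Poisson case, so $f\cont{1}g=0$ a.e. (respectively $f\star_1^0 g=0$ a.e.). Theorem \ref{freeustuzakaiwignercase} then gives freeness of $I_n^{\frak{M}}(f)$ and $I_m^{\frak{M}}(g)$, completing the proof.

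I do not expect any genuine obstacle: the only two points deserving a line of justification are the identification of the non-commutative covariance with $\varphi(XY)-\varphi(X)\varphi(Y)$ for self-adjoint $X,Y$ under the tracial state $\varphi$, and the elementary identity $\smcont{1}=\star_1^0$, which is what matches the $p=1$ term of the star-contraction sum in the formula above with the contraction appearing in Theorem \ref{freeustuzakaiwignercase}(ii).
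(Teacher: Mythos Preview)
Your proof is correct and follows essentially the same route as the paper: both directions rely on formula \eqref{expressionofcovarianceofthesquares}, with the ``only if'' direction coming directly from Definition \ref{deffreeness} and the ``if'' direction using nonnegativity of the summands to extract $f\cont{1}g=0$ (resp.\ $f\star_1^0 g=0$) and then invoking Theorem \ref{freeustuzakaiwignercase}.
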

\begin{proof}

First, assume that $I_{n}^{\frak{M}}\left(f \right)$ and $ I_{m}^{\frak{M}}\left(g \right)$ are free. Then, by Definition \ref{deffreeness}, it holds that 
\begin{align*}
& \varphi\left( \left[I_{n}^{\frak{M}}\left(f \right)^2 -
    \varphi\left( I_{n}^{\frak{M}}\left(f \right)^2\right)
  \right]\left[I_{m}^{\frak{M}}\left(g \right)^2 - \varphi\left(
      I_{m}^{\frak{M}}\left(g \right)^2\right)   \right]\right) \\
& \qquad\qquad\qquad\qquad\qquad = \varphi\left(I_{n}^{\frak{M}}\left(f \right)^2I_{m}^{\frak{M}}\left(g \right)^2\right) - \varphi\left(I_{n}^{\frak{M}}\left(f \right)^2\right)\varphi\left(I_{m}^{\frak{M}}\left(g \right)^2\right) = 0.
\end{align*}
As $\mbox{\rm Cov}\left(I_{n}^{\frak{M}}\left(f \right)^2 , I_{m}^{\frak{M}}\left(g \right)^2\right) =\varphi\left(I_{n}^{\frak{M}}\left(f \right)^2I_{m}^{\frak{M}}\left(g \right)^2\right) - \varphi\left(I_{n}^{\frak{M}}\left(f \right)^2\right)\varphi\left(I_{m}^{\frak{M}}\left(g \right)^2\right)$, the desired conclusion follows.

Conversely, assume that $\mbox{\rm Cov}\left(I_{n}^{\frak{M}}\left(f \right)^2 , I_{m}^{\frak{M}}\left(g \right)^2\right) =0$. Using \eqref{expressionofcovarianceofthesquares}, it holds that 
\begin{align*}
\mbox{\rm Cov}\left(I_{n}^{\frak{M}}\left(f \right)^2 ,
  I_{m}^{\frak{M}}\left(g \right)^2\right) &= \sum_{p=1}^{n \wedge m}
\norm{f \cont{p} g}_{L^2 \left( \R_+^{n+m-2p} \right)}^{2}\\
& \qquad\qquad\qquad\qquad + \mathds{1}_{\left\lbrace \frak{M} = \hat{N}\right\rbrace }\sum_{p=1}^{n \wedge m}\norm{f \smcont{p} g}_{L^2 \left( \R_+^{n+m-2p+1} \right)}^{2},
\end{align*}
which implies that all the contraction norms appearing on the right-hand side of the above equality are zero. In particular, $\norm{f \cont{1} g}_{L^2 \left( \R_+^{n+m-2} \right)}^{2} = 0$ in the Wigner case and $\norm{f \star_{1}^{0} g}_{L^2 \left( \R_+^{n+m-1} \right)}^{2} = 0$ in the free Poisson case, which, by Theorem \ref{freeustuzakaiwignercase} implies that $I_{n}^{\frak{M}}\left(f \right)$ and $I_{m}^{\frak{M}}\left(g \right)$ are free.
\end{proof}
\subsection{Characterization in terms of free Malliavin gradients}
In the context of Wiener integrals, {\"U}st{\"u}nel and Zakai proved
in \cite[Proposition 2]{ustunel_independence_1989} that a necessary
condition for two Wiener integrals $I_n^W \left( f \right)$ and $I_m^W
\left( g \right)$ to be independent was that the inner product of
their Malliavin derivatives was zero almost surely. More precisely,
their statement reads as follows.
\begin{theorem}[{\"U}st{\"u}nel and Zakai
  \cite{ustunel_independence_1989}, 1989]
A necessary condition for the independence of $I_n^W \left( f \right)$ and $I_m^W
\left( g \right)$ is 
\begin{equation}
  \label{malliavincondition}
\left\langle DI_n^W \left( f \right), DI_m^W \left( g \right)
\right\rangle_{L^2 \left( \R_+ \right)} = 0 \quad a.s.
\end{equation}
\end{theorem}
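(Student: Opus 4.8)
The plan is to reduce the left-hand side of \eqref{malliavincondition} to a finite linear combination of multiple Wiener--It\^o integrals whose kernels are contractions $f\otimes_p g$ with $p\geq 1$, and then to show that independence forces all of these contractions to vanish. First I would dispose of the degenerate cases: if $n=0$ or $m=0$ then one of the two arguments of the inner product is the Malliavin derivative of a constant, hence zero, and there is nothing to prove; so assume $n,m\geq 1$. Recalling that the Malliavin derivative acts on Wiener chaos by $D_t I_n^W(f)=n\,I_{n-1}^W(f(\cdot,t))$, where $f(\cdot,t)(x_1,\dots,x_{n-1})=f(x_1,\dots,x_{n-1},t)$ is symmetric in its $n-1$ arguments, one has
\begin{equation*}
\left\langle DI_n^W(f),DI_m^W(g)\right\rangle_{L^2(\mathbb{R}_{+})} = nm\int_0^{\infty} I_{n-1}^W\!\big(f(\cdot,t)\big)\,I_{m-1}^W\!\big(g(\cdot,t)\big)\,dt .
\end{equation*}

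Next I would apply the classical multiplication formula for multiple Wiener--It\^o integrals to the integrand and integrate in $t$, using Fubini (legitimate because $t\mapsto\|f(\cdot,t)\|_{L^2}\|g(\cdot,t)\|_{L^2}$ is integrable by Cauchy--Schwarz and the fact that $f,g\in L^2$) to pull the $dt$-integration inside each multiple integral. Since $f$ and $g$ are symmetric, contracting $r$ variables in $f(\cdot,t)\otimes g(\cdot,t)$ and then integrating the remaining variable $t$ produces precisely the symmetrized contraction $\widetilde{f\otimes_{r+1}g}$, so one obtains an identity of the shape
\begin{equation*}
\left\langle DI_n^W(f),DI_m^W(g)\right\rangle_{L^2(\mathbb{R}_{+})} = nm\sum_{r=0}^{(n-1)\wedge(m-1)} r!\binom{n-1}{r}\binom{m-1}{r}\, I_{n+m-2-2r}^W\!\big(\widetilde{f\otimes_{r+1}g}\big) ,
\end{equation*}
so that every kernel appearing on the right-hand side is (a symmetrization of) a contraction $f\otimes_p g$ with $1\leq p\leq n\wedge m$.

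It then remains to use the independence hypothesis. Since $I_n^W(f)$ and $I_m^W(g)$ are independent, all their joint moments factorize; in particular $\varphi\big(I_n^W(f)^2 I_m^W(g)^2\big)=\varphi\big(I_n^W(f)^2\big)\varphi\big(I_m^W(g)^2\big)$, i.e.\ $\mathrm{Cov}\big(I_n^W(f)^2,I_m^W(g)^2\big)=0$. Expanding this covariance by the product formula, exactly as in the computation leading to \eqref{expressionofcovarianceofthesquares} but in the classical Wiener setting, yields $\mathrm{Cov}\big(I_n^W(f)^2,I_m^W(g)^2\big)=\sum_{p=1}^{n\wedge m} c_p\,\|f\otimes_p g\|_{L^2}^2$ with strictly positive constants $c_p$; hence $f\otimes_p g=0$ almost everywhere for every $p=1,\dots,n\wedge m$. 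Substituting into the displayed identity, every summand is a multiple integral of a kernel that is zero almost everywhere, so $\left\langle DI_n^W(f),DI_m^W(g)\right\rangle_{L^2(\mathbb{R}_{+})}=0$ as an element of $L^2(\mathscr{A},\varphi)$, and in particular almost surely, which is \eqref{malliavincondition}.

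The main obstacle is not a single deep step but two pieces of bookkeeping that must be handled carefully: justifying the interchange of the $dt$-integration with the multiple Wiener integral and checking that the kernel produced is genuinely $\widetilde{f\otimes_{r+1}g}$ (this is where the symmetry of $f$ and $g$ is used in an essential way), and verifying the classical analogue of the covariance expansion \eqref{expressionofcovarianceofthesquares}, which is the clean way to pass from the vanishing of one contraction to the vanishing of all of them --- a passage that is not valid at the pointwise level and should not be attempted by manipulating $f\otimes_1 g=0$ directly.
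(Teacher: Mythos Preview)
The paper does not supply its own proof of this statement: it is quoted verbatim as a result of \"Ust\"unel and Zakai and used only to motivate the free analogue (Theorem~\ref{theoremcharacfreegrad}). So there is no ``paper's proof'' to compare against, and your argument should be judged on its own merits.

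Your proof is correct. The expansion of $\langle DI_n^W(f),DI_m^W(g)\rangle_{L^2(\mathbb R_+)}$ into a sum of multiple integrals with kernels $\widetilde{f\otimes_{r+1}g}$ is the standard computation, and your use of the covariance identity $\mathrm{Cov}(I_n^W(f)^2,I_m^W(g)^2)=\sum_{p=1}^{n\wedge m}c_p\|f\otimes_p g\|^2$ to kill every contraction simultaneously is legitimate. The only point where some care is genuinely needed is the one you already flag: in the classical setting the product formula involves symmetrised kernels, so the derivation of the covariance identity is not line-by-line identical to \eqref{expressionofcovarianceofthesquares}; one has to unwind the symmetrisations via the Fubini-type identity $\langle f\otimes_{n-p}f,\,g\otimes_{m-p}g\rangle=\|f\otimes_p g\|^2$ (valid for symmetric $f,g$) before the sum of non-negative terms emerges.

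Within the paper's framework there is a shorter route you might prefer: invoke Theorem~\ref{ustuzakaitheorem} (the full \"Ust\"unel--Zakai characterisation, already stated in the paper) to get $f\otimes_1 g=0$ a.e.\ directly from independence, then apply Lemma~\ref{lemmacont1impliescontp} to obtain $f\otimes_p g=0$ for every $p=1,\dots,n\wedge m$, and feed this into your chaos expansion of the inner product. This bypasses the classical covariance computation entirely. Your covariance route has the advantage of being self-contained (it does not presuppose the characterisation theorem), while the alternative is quicker if one is willing to cite Theorem~\ref{ustuzakaitheorem} as known.
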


However, they were also able to show that this condition is not
sufficient and hence cannot provide a proper characterization of
independence of Wiener integrals. The technical reason for this is
that this condition implies that only the symmetrization of the first
contraction of $f$ and $g$ be zero almost everywhere, which in turns
does not necessarily imply that the first contraction itself be zero
almost everywhere. As the latter is an equivalent statement to
independence, the sufficiency of \eqref{malliavincondition} fails.

In the free case, a free version of the Malliavin calculus (with
respect to the free Brownian motion) has been
developed by Biane and Speicher in \cite{biane_stochastic_1998}, and
it is a natural question to ask whether it can be used to provide a
characterization of freeness for Wigner integrals.
\begin{remark}
  In this subsection, we only focus on Wigner integrals and not on the
  free Poisson case. The reason for this is that there is no free
  Malliavin calculus available for free Poisson random measures, which
  is what would be needed to explore similar statements in the free
  Poisson case.
\end{remark}

The following result is the main result of this subsection, which is a
characterization of freeness in terms of the free gradient operator
for Wigner integrals with symmetric kernels. It is worth noting that, as opposed to
the case of Wiener integrals studied by {\"U}st{\"u}nel and Zakai, we
are able to provide a positive answer to the question of
characterizing freeness in terms of free gradients, which illustrates
a fundamental difference between the classical case and the free case.
\begin{theorem}
  \label{theoremcharacfreegrad}
Let $n,m$ be natural numbers and let $f \in
L^2\left(\mathbb{R}_{+}^{n} \right) $ and $g \in
L^2\left(\mathbb{R}_{+}^{m} \right) $ be symmetric functions. Then,
$I_{n}^{S}\left(f \right)$ and $I_{m}^{S}\left(g \right)$ are free if
and only if
\begin{equation}
  \label{innerprodiszero}
  \left\langle \nabla I_n^S(f), \nabla I_m^S(g) \right\rangle = 0 
\mbox{ in $L^2(\mathcal{A}\otimes\mathcal{A},\varphi\otimes\varphi)$,}
\end{equation}
where the notation $\left\langle \cdot, \cdot
\right\rangle$ is defined in \eqref{defofinnerprodint}.
\end{theorem}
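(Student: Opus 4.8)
The plan is to compute $\langle \nabla I_n^S(f), \nabla I_m^S(g)\rangle$ explicitly using the definition of the free gradient together with the biproduct formula \eqref{biintegralmultformula}, and to express the condition \eqref{innerprodiszero} in terms of the nested contractions $f \cont{p} g$ so that Theorem \ref{freeustuzakaiwignercase} can be invoked. First I would write
\begin{equation*}
\nabla_t I_n^S(f) = \sum_{k=1}^{n}[I_{k-1}^S \otimes I_{n-k}^S]\bra{f_t^{(k)}}, \qquad
\nabla_t I_m^S(g) = \sum_{\ell=1}^{m}[I_{\ell-1}^S \otimes I_{m-\ell}^S]\bra{g_t^{(\ell)}},
\end{equation*}
and then, by the definition \eqref{defofinnerprodint} of the pairing, $\langle \nabla I_n^S(f), \nabla I_m^S(g)\rangle = \int_{\R_+} \nabla_t I_n^S(f) \,\sharp\, (\nabla_t I_m^S(g))^* \, dt$. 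Using the biproduct formula \eqref{biintegralmultformula} to expand each $[I_{k-1}^S \otimes I_{n-k}^S](f_t^{(k)}) \,\sharp\, [I_{\ell-1}^S \otimes I_{m-\ell}^S]((g_t^{(\ell)})^*)$ as a double sum of bi-integrals of bicontractions, and then integrating in $t$, gives an expression of the form $\sum [I_{a}^S \otimes I_{b}^S](\text{something built from } f, g)$. The key point is that reintegrating the variable $t$ that was "pulled out" by $\nabla$ reconstitutes, inside each term, a nested contraction of $f$ with $g$ in one slot: concretely one gets pieces of the shape $\int_{\R_+} f_t^{(k)} \conts{p,r} (g_t^{(\ell)})^* \, dt$, which is (up to the mirror symmetry built into $g^*$ and the full symmetry of $f$ and $g$) a contraction $f \cont{p+r+1} g$ re-expressed as an element of a tensor-product space.

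Next, I would apply $\varphi \otimes \varphi$ to the resulting expansion. By the Wigner bisometry \eqref{wignerbisometry}, only the $[I_0^S \otimes I_0^S]$ component survives under $\varphi \otimes \varphi$, but the statement \eqref{innerprodiszero} asks for the stronger conclusion that the whole $\mathscr{A}\otimes\mathscr{A}$-valued object vanishes almost surely — i.e. that every bi-integral term in the expansion has a zero kernel. So the strategy is: show that each kernel appearing is, after using the full symmetry of $f$ and $g$ and properties 1–4 of Remark \ref{remarkbicont}, a sum of (tensorized) nested contractions $f \cont{q} g$ with $q \geq 1$; conversely, isolate the term corresponding to the single nested contraction of order $1$ to see that its vanishing is forced. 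Then:

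(Sufficiency) If $I_n^S(f)$ and $I_m^S(g)$ are free, then by Theorem \ref{freeustuzakaiwignercase}(i) we have $f \cont{1} g = 0$ a.e., and by Lemma \ref{lemmacont1impliescontp} every higher nested contraction $f \cont{q} g$, $q \geq 1$, also vanishes a.e.; substituting into the computed expansion shows every kernel is zero, hence \eqref{innerprodiszero} holds. (Necessity) Conversely, if \eqref{innerprodiszero} holds a.s., then in particular its $[I_0^S \otimes I_0^S]$-component (a scalar) vanishes; but that scalar component, or alternatively a suitably chosen projection onto one of the bi-integral chaoses of total order two, is exactly $\|f \cont{1} g\|_{L^2(\R_+^{n+m-2})}^2$ up to a positive combinatorial constant. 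Hence $f \cont{1} g = 0$ a.e., and Theorem \ref{freeustuzakaiwignercase}(i) gives freeness. The crucial structural advantage over the classical case is that the $\sharp$-pairing, unlike the classical inner product of Malliavin derivatives, does not symmetrize the contraction: each ordered term $f_t^{(k)} \conts{p,r} (g_t^{(\ell)})^*$ is kept separately, so vanishing of the full operator $\langle \nabla I_n^S(f), \nabla I_m^S(g)\rangle$ genuinely forces $f \cont{1} g = 0$ rather than only a symmetrized version of it.

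The main obstacle I anticipate is bookkeeping: correctly tracking, through the biproduct formula and the $t$-integration, which bicontraction indices $(p,r)$ contribute to which nested contraction order $q = p+r+1$, and verifying that no cancellation between different $(k,\ell)$ pairs can make a nonzero $f \cont{1} g$ invisible in the operator $\langle \nabla I_n^S(f), \nabla I_m^S(g)\rangle$. To handle this cleanly I would compute the $L^2(\mathscr{A}\otimes\mathscr{A},\varphi\otimes\varphi)$-squared norm of $\langle \nabla I_n^S(f), \nabla I_m^S(g)\rangle$ using the bisometry, obtaining a sum of squared contraction norms $\sum_{q\geq 1} c_q \|f \cont{q} g\|^2$ with strictly positive constants $c_q$ (analogous to \eqref{expressionofcovarianceofthesquares}); then \eqref{innerprodiszero} is equivalent to all these norms vanishing, in particular $\|f \cont{1} g\| = 0$, and the equivalence with freeness follows immediately from Theorem \ref{freeustuzakaiwignercase}.
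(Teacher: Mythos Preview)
Your proposal is correct and follows essentially the same route as the paper: expand $\langle \nabla I_n^S(f), \nabla I_m^S(g)\rangle$ via the biproduct formula, use the full symmetry of $f$ and $g$ to reduce each integrated bicontraction $\int f_t^{(k)}\conts{p,r} g_t^{(\ell)}\,dt$ to $f\cont{p+r+1}g$, and then compute the $L^2(\mathscr{A}\otimes\mathscr{A},\varphi\otimes\varphi)$-squared norm via the bisometry to obtain $\sum_{q=1}^{n\wedge m} c_q\,\|f\cont{q}g\|^2$ with $c_q>0$, after which Theorem~\ref{freeustuzakaiwignercase} and Lemma~\ref{lemmacont1impliescontp} finish both directions. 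One small correction: the $[I_0^S\otimes I_0^S]$ component of the expansion, when it exists, carries the \emph{top} contraction $f\cont{(n+m)/2}g$ (i.e.\ $\langle f,g\rangle$ when $n=m$), not $f\cont{1}g$, so your last-paragraph squared-norm argument is indeed the right device --- and is exactly what the paper does.
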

\begin{proof}

  In the following we will use the shorthand $f^{(k)}_s$ to denote the function
given by
\begin{equation*}
  f_s^{(k)}(x_1,\dots,x_{n-1}) =
f(x_1,\dots,x_{k-1},s,x_{k+1},\dots,x_n).
\end{equation*}
Applying the definition of the action of $\nabla$ on Wigner integrals,
we get that
\begin{align*}
\left\langle \nabla I_n^S(f), \nabla I_m^S(g) \right\rangle & = \int_{\R_+}^{}(\nabla_s I_n^S(f)) \sharp (\nabla_s I_m^S(g))^{*}ds
\\
&  = \sum_{k=1}^n\sum_{q=1}^m \int_{\R_+}^{}[I_{k-1}^{S}\otimes I_{n-k}^{S}]\left( f_s^{(k)}
\right)\sharp \left([I_{q-1}^{S} \otimes I_{m-q}^{S}]\left( g_s^{(q)} \right)\right)^{*}ds
\\
&  = \sum_{k=1}^n\sum_{q=1}^m
\int_{\R_+}^{}[I_{k-1}^{S}\otimes I_{n-k}^{S}]\left( f_s^{(k)}
\right)\sharp [I_{q-1}^{S}\otimes I_{m-q}^{S}]\left( g_s^{(q)} \right)ds,
\end{align*}
where the last equality follows from the full symmetry of the function
$g$. The biproduct formula \eqref{biintegralmultformula} yields
\begin{align*}
& \left\langle \nabla I_n^S(f), \nabla I_m^S(g) \right\rangle
\\
&  = \sum_{k=1}^n\sum_{q=1}^m
\int_{\R_+}^{}\sum_{p=0}^{(k\wedge q)-1}\sum_{r=0}^{(n-k)\wedge
  (m-q)}  [I_{k+q-2-2p}^{S}\otimes I_{n+m-k-q-2r}^{S}]\left( f_s^{(k)}
  \conts{p,r} g_s^{(q)}\right)ds,
\end{align*}
and by using a Fubini argument, it follows that
\begin{align*}
& \left\langle \nabla I_n^S(f), \nabla I_m^S(g) \right\rangle
\\
&  = \sum_{k=1}^n\sum_{q=1}^m
\sum_{p=0}^{(k\wedge q)-1}\sum_{r=0}^{(n-k)\wedge
  (m-q)} [I_{k+q-2-2p}^{S}\otimes I_{n+m-k-q-2r}^{S}]\left(\int_{\R_+}^{} f_s^{(k)}
  \conts{p,r} g_s^{(q)} ds\right).
\end{align*}
The full symmetry of $f$ and $g$ implies that $f_s^{(k)} = f_s^{(n)}$
for every $1 \leq k \leq n$ and $g_s^{(q)} = g_s^{(1)}$
for every $1 \leq q \leq m$. Hence, using Remark \eqref{remarkbicont},
we get 
\begin{equation*}
\int_{\R_+}^{} f_s^{(k)}
  \conts{p,r} g_s^{(q)} ds = f \cont{p+r+1} g,
\end{equation*}
so that we finally get
\begin{align}
  \label{innerprodgradientfinal}
& \left\langle \nabla I_n^S(f), \nabla I_m^S(g) \right\rangle \notag
\\
&  = \sum_{k=1}^n\sum_{q=1}^m
\sum_{p=0}^{(k\wedge q)-1}\sum_{r=0}^{(n-k)\wedge
  (m-q)}  [I_{k+q-2-2p}^{S}\otimes I_{n+m-k-q-2r}^{S}]\left(f \cont{p+r+1} g\right).
\end{align}
Using the Wigner bisometry
\eqref{wignerbisometry}, we see that the quantity 
\begin{equation*}
\varphi \otimes \varphi \left( \left|\left\langle \nabla I_n^S(f), \nabla I_m^S(g) \right\rangle\right|^2\right)
\end{equation*}
is just a sum with strictly positive
coefficients only involving the contractions norms 
\begin{equation*}
\norm{f \cont{1} g}_{L^2 \left( R_+^{n+m-2} \right)}^2, \norm{f \cont{2}
  g}_{L^2 \left( R_+^{n+m-4} \right)}^2, \ldots, \norm{f \cont{n \wedge
    m} g}_{L^2 \left( R_+^{n+m-2(n\wedge m)} \right)}^2.
\end{equation*}
Formally, we have an equality of the type 
\begin{equation}
  \label{repasasumofcontnorms}
\varphi \otimes \varphi \left( \left|\left\langle \nabla I_n^S(f), \nabla I_m^S(g) \right\rangle\right|^2\right) =
\sum_{u=1}^{n\wedge m}c_u \norm{f \cont{u} g}_{L^2 \left( R_+^{n+m-2u} \right)}^2,
\end{equation}
with $c_u>0$.

Now assume that $I_n^S(f)$ and $I_m^S(g)$ are free. By Theorem
\ref{freeustuzakaiwignercase}, this is equivalent to $f \cont{1}g = 0$
almost everywhere, which by Lemma \ref{lemmacont1impliescontp} implies that $f \cont{p}g = 0$ almost
everywhere for all $1 \leq p \leq n \wedge m$. Using
\eqref{repasasumofcontnorms}, we get \eqref{innerprodiszero}.
\\~\\
Conversely, assume that
\begin{equation*}
\left\langle \nabla I_n^S(f), \nabla I_m^S(g) \right\rangle = 0.
\end{equation*}
Then, we have that
\begin{equation*}
\varphi \otimes \varphi \left( \left|\left\langle \nabla I_n^S(f), \nabla I_m^S(g) \right\rangle\right|^2\right) = 0.
\end{equation*}
This implies that all the norms appearing in the representation
\eqref{repasasumofcontnorms} are zero, and im particular that $f \cont{1} g = 0$ almost everywhere. Using Theorem
\ref{freeustuzakaiwignercase} concludes the proof.
\end{proof}

\section{Characterizations of asymptotic freeness}
In the asymptotic context, the problem of interest is to
find necessary and sufficient conditions for the limits in law of
multiple integrals to be free. It is a much more general problem
compared to before, as limits in law of multiple integrals need not be multiple
integrals themselves.
\subsection{Characterization in terms of contractions}
In the classical case, the following result holds (see \cite[Theorem 3.1]{nourdin_asymptotic_2014}).
\begin{theorem}[Nourdin and Rosi{\'n}ski \cite{nourdin_asymptotic_2014}, 2014]
\label{nourdinrosinskiasymptotic}
Let $n,m$ be natural numbers and let $\left\lbrace f_k \colon k \geq 1\right\rbrace \subset  L^2\left(\mathbb{R}_{+}^{n} \right) $ and $\left\lbrace g_k \colon k \geq 1\right\rbrace \subset  L^2\left(\mathbb{R}_{+}^{m} \right) $ be sequences of symmetric functions. Assume that $\big(I_{n}^{W}\left(f_k \right),I_{m}^{W}\left(g_k \right)\big) \cvlaw (F,G)$ as $k \rightarrow \infty$, where $F,G$ are square integrable random variables with laws determined by their moments. Then, $F$ and $G$ are independent if and only if $f_k \otimes_{p} g_k \converge 0$ in $L^2(\R_+^{n+m-2p})$ for all $p=1, \ldots, n \wedge m$.
\end{theorem}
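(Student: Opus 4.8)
The plan is to run the method of moments, using the Wiener product formula \eqref{productformula} iteratively together with the isometry \eqref{wigneriso}, and to reduce both implications to the behaviour of the $L^2$-norms $\norm{f_k\otimes_p g_k}$. Two preliminary reductions make this precise. First, since each chaos has fixed order, hypercontractivity on Wiener chaos shows that $\{I_n^W(f_k)\}$ and all its fixed powers are uniformly integrable once $\sup_k\norm{f_k}_{L^2(\R_+^n)}<\infty$; and this supremum is finite, for otherwise $I_n^W(f_k)/\norm{f_k}$ would converge in law to $0$ while having constant nonzero $L^2$-norm, which uniform integrability forbids. The same holds for $\{g_k\}$, so $\E[I_n^W(f_k)^a]\to\E[F^a]$, $\E[I_m^W(g_k)^b]\to\E[G^b]$, and — the laws of $F,G$ being moment-determinate — the pair $(I_n^W(f_k),I_m^W(g_k))$ converges in law to the product measure $\mu_F\otimes\mu_G$ (which is the meaning of ``$F$ and $G$ are independent'' here) if and only if $\E[I_n^W(f_k)^aI_m^W(g_k)^b]\to\E[F^a]\E[G^b]$, i.e. $\operatorname{Cov}(I_n^W(f_k)^a,I_m^W(g_k)^b)\to0$, for all $a,b\geq1$. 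Second, the kernel norms being bounded, ``$f_k\otimes_p g_k\to0$ a.e.'' may be used interchangeably with ``$f_k\otimes_p g_k\to0$ in $L^2$'' throughout, and it is only the latter the argument uses.

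For the implication ``$f_k\otimes_p g_k\to0$ for all $p$ $\Rightarrow$ independence'', expand via the iterated product formula: $I_n^W(f_k)^a=\E[I_n^W(f_k)^a]+\sum_{r\geq1}I_r^W(\alpha_r^{(k)})$ and $I_m^W(g_k)^b=\E[I_m^W(g_k)^b]+\sum_{r\geq1}I_r^W(\beta_r^{(k)})$, where $\alpha_r^{(k)}$ (resp. $\beta_r^{(k)}$) is a finite sum of iterated contractions of copies of $f_k$ only (resp. $g_k$ only). By orthogonality of chaoses of different orders and the isometry, $\operatorname{Cov}(I_n^W(f_k)^a,I_m^W(g_k)^b)=\sum_{r\geq1}r!\,\langle\alpha_r^{(k)},\beta_r^{(k)}\rangle$ (up to an inconsequential symmetrization), and each inner product is a finite sum of complete ``contraction diagrams'' pairing legs of copies of $f_k$ with legs of copies of $g_k$. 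Since $r\geq1$, each such diagram contains at least one leg of some $f_k$-copy contracted against a leg of some $g_k$-copy; because $f_k,g_k$ are symmetric, any single $f_k$–$g_k$ block in such a diagram is an honest contraction $f_k\otimes_q g_k$ with $1\leq q\leq n\wedge m$, and using the sub-multiplicativity $\norm{h\otimes_p u}_{L^2}\leq\norm{h}_{L^2}\norm{u}_{L^2}$ to peel off the remaining factors one bounds each diagram by $C\,\norm{f_k\otimes_q g_k}\,\norm{f_k}^{a-1}\norm{g_k}^{b-1}$ for some $q\geq1$. Hence $\langle\alpha_r^{(k)},\beta_r^{(k)}\rangle\to0$, every mixed moment factorizes in the limit, and by the reduction above the pair converges to $\mu_F\otimes\mu_G$.

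For the converse, independence forces $\operatorname{Cov}(I_n^W(f_k)^{2a},I_m^W(g_k)^{2b})\to0$ for all $a,b\geq1$, and the main obstacle — the technical heart of the theorem — is to extract from these conditions the vanishing of the \emph{individual un-symmetrized} norms $\norm{f_k\otimes_p g_k}$. Already $\operatorname{Cov}(I_n^W(f_k)^2,I_m^W(g_k)^2)$ expands as a combination $\sum_{p=1}^{n\wedge m}c_p\,\langle\text{sym. contraction of }f_k\,,\,\text{sym. contraction of }g_k\rangle$ in which the symmetrizations (forced by $\E[I_r^W(h)^2]=r!\norm{\widetilde h}^2$) spoil the clean identity $\langle f\otimes_{n-p}f,g\otimes_{m-p}g\rangle=\norm{f\otimes_p g}^2$, so the terms need not be individually non-negative; and vanishing of a symmetrized contraction does not force vanishing of the un-symmetrized one (simple rank-two kernels give a counterexample). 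The way around this I would take is a downward induction on $p$: for the top value $p=n\wedge m$ the symmetrization is trivial — one of $f_k,g_k$ has all its legs contracted away — so that term is exactly a positive multiple of $\norm{f_k\otimes_{n\wedge m}g_k}^2$ and must vanish; assuming $\norm{f_k\otimes_q g_k}\to0$ for all $q>p$, every diagram in the relevant moment difference that contains a cross-block of order $>p$ drops out, and one verifies — via a Cauchy–Schwarz / positive-semidefiniteness argument on the surviving kernels — that the leftover contribution is a non-negative multiple of $\norm{f_k\otimes_p g_k}^2$ plus terms controlled by $\norm{f_k\otimes_q g_k}$ with $q>p$, whence $\norm{f_k\otimes_p g_k}\to0$. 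I expect this downward induction, and in particular the verification that the isolated surviving term is a genuinely non-negative multiple of $\norm{f_k\otimes_p g_k}^2$ (rather than subject to cancellation), to be the hardest step.
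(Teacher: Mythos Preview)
The paper does not prove this statement: Theorem~\ref{nourdinrosinskiasymptotic} is quoted from \cite{nourdin_asymptotic_2014} as classical background, with no argument supplied. There is therefore no in-paper proof to compare your attempt against; the paper only proves the free analogue, Theorem~\ref{asymptoticfreeustuzakaiwignercase}.

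Your sufficiency direction (vanishing contractions $\Rightarrow$ asymptotic independence) follows the same moment-expansion strategy the paper uses in the free case and is essentially correct. One small caveat: the formula you cite as \eqref{productformula} is the \emph{Wigner} product formula; the classical Wiener product formula carries extra combinatorial factors $p!\binom{n}{p}\binom{m}{p}$ and the isometry reads $\E[I_r^W(h)^2]=r!\,\|\tilde h\|^2$, so the bookkeeping is heavier than your sketch suggests, though the idea survives intact.

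For the necessity direction you correctly identify why the free argument does not transplant: in the free case \eqref{expressionofcovarianceofthesquares} expresses $\operatorname{Cov}(I_n(f)^2,I_m(g)^2)$ as a sum of \emph{non-negative} contraction norms, whereas in the classical case the symmetrizations forced by the Wiener isometry spoil term-by-term positivity (indeed, already for $n=m=2$ the cross-terms involve $\int (f\otimes_1 g)(x,y)(f\otimes_1 g)(y,x)\,dxdy$, which need not be non-negative). But your proposed fix---a downward induction on $p$ extracting $\|f_k\otimes_p g_k\|^2$ from higher mixed moments---is asserted rather than argued: the claim that, after removing cross-blocks of order $>p$, the surviving contribution is a \emph{non-negative} multiple of $\|f_k\otimes_p g_k\|^2$ is precisely the point that needs proof, and you give no mechanism for it. This is exactly where the original Nourdin--Rosi\'nski argument does nontrivial work, so as written your converse has a genuine gap at its declared ``hardest step''.
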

\begin{remark}
{\rm
The fact that the limiting random variables in the above theorem need
to have laws determined by their moments (a condition that we get
automatically in the free setting) has been later shown in \cite{nourdin_strong_2016} to be not necessary.
On the other hand, observe that 
 the necessary and
sufficient condition for asymptotic independence is not $f_k \otimes_1
g_k \converge 0$ in $L^2(\R_+^{n+m-2})$, as one could have expected in view of Theorem
\ref{ustuzakaitheorem}. This weaker condition is necessary but not
sufficient in the asymptotic case, as pointed out in \cite[Remark
3.2]{nourdin_asymptotic_2014}. In the free case, the same phenomenon
happens in the sense that the condition $f_k \cont{1} g_k \converge 0$
in $L^2(\R_+^{n+m-2})$ (in the Wigner case) and $f_k \star_{1}^{0} g_k
\converge 0$ in $L^2(\R_+^{n+m-2})$  (in the free Poisson case) will prove to be necessary but not
sufficient either, for the same reason. 
}
\end{remark}

The following result in the free case
is hence rather an analog of the stronger results of
\cite{nourdin_strong_2016} instead of those found in
\cite{nourdin_asymptotic_2014}. In Theorem \ref{nourdinrosinskiasymptotic} or in the forthcoming Theorem \ref{asymptoticfreeustuzakaiwignercase}, note that $F$ and $G$ do not need to have the
form of a multiple integral. This implies that sequences of multiple
integrals can be used in order to prove the freeness of general random
variables in $L^2 \left( \varphi \right)$ (provided these random
variables admit approximating sequences of multiple integrals with symmetric kernels).

\begin{theorem}
\label{asymptoticfreeustuzakaiwignercase}
Let $n,m$ be natural numbers and let $\left\lbrace f_k \colon k \geq
  1\right\rbrace \subset  L^2\left(\mathbb{R}_{+}^{n} \right) $ and
$\left\lbrace g_k \colon k \geq 1\right\rbrace \subset
L^2\left(\mathbb{R}_{+}^{m} \right) $ be sequences of symmetric
functions such that
\begin{equation}
  \label{jointconvassumption1}
\big(I_{n}^{\frak{M}}\left(f_k \right),I_{m}^{\frak{M}}\left(g_k
\right)\big) \cvlaw (F,G)
\end{equation}
as $k \rightarrow \infty$, where $F,G$ are random variables in $L^2\left( \mathscr{A},\varphi\right) $. Then, 
\begin{enumerate}[(i)]
\item If $\frak{M} = S$, then $F$ and $G$ are free if and only if $f_k
  \cont{p} g_k \converge 0$ in $L^2(\R_+^{n+m-2p})$ for all $p=1, \ldots, n \wedge m$.
\item If $\frak{M} = \hat{N}$, then $F$ and $G$ are free if and only
  if $f_k \cont{p} g_k \converge 0$ in $L^2(\R_+^{n+m-2p})$ and $f_k
  \smcont{p} g_k \converge 0 $ in $L^2(\R_+^{n+m-2p+1})$  for all $p=1, \ldots, n \wedge m$. 
\end{enumerate}
\end{theorem}
\begin{proof}

First, assume that $F$ and $G$ are free. Then,  it holds that
$\mbox{\rm Cov}\left(F^2 , G^2\right) = 0$. Using
\eqref{expressionofcovarianceofthesquares} along with assumption
\eqref{jointconvassumption1} yields
\begin{align*}
&\mbox{\rm Cov}\left(I_{n}^{\frak{M}}\left(f_k \right)^2 ,
  I_{m}^{\frak{M}}\left(g_k \right)^2\right) =  \sum_{p=1}^{n \wedge
  m} \norm{f_k \cont{p} g_k}_{L^2 \left( \R_+^{n+m-2p} \right)}^{2} \\
&\qquad\qquad + \mathds{1}_{\left\lbrace \frak{M} = \hat{N}\right\rbrace }\sum_{p=1}^{n \wedge m}\norm{f_k \smcont{p} g_k}_{L^2 \left( \R_+^{n+m-2p+1} \right)}^{2} \converge \mbox{\rm Cov}\left(F^2 , G^2\right) = 0, 
\end{align*}
so that for all $p=1, \ldots, n \wedge m$, $f_k \cont{p} g_k \converge 0$ (in the Wigner case) and for all $p=1, \ldots, n \wedge m$, $f_k \cont{p} g_k \converge 0$ and $f_k \smcont{p} g_k \converge 0 $ (in the free Poisson case).

Conversely, assume that, for all $p=1, \ldots, n \wedge m$, $f_k
\cont{p} g_k \converge 0$ (in the Wigner case) or that, for all $p=1,
\ldots, n \wedge m$, $f_k \cont{p} g_k \converge 0$ and $f_k
\smcont{p} g_k \converge 0 $ (in the free Poisson case). As in the
proof of Theorem \ref{freeustuzakaiwignercase} (together with
assumption \eqref{jointconvassumption1}), these conditions imply that, for any natural number $\ell$ and for any natural numbers $k_1, \ldots, k_{2\ell}$,
\begin{align*}
& \varphi\left( \left[I_{n}^{\frak{M}}\left(f_k \right)^{k_1} - \varphi\left( I_{n}^{\frak{M}}\left(f_k \right)^{k_1}\right)   \right]\left[I_{m}^{\frak{M}}\left(g_k\right)^{k_2} - \varphi\left( I_{m}^{\frak{M}}\left(g_k \right)^{k_2}\right)   \right] \right. \\
&  \left. \quad \cdots \left[I_{n}^{\frak{M}}\left(f_k \right)^{k_{2\ell-1}} - \varphi\left( I_{n}^{\frak{M}}\left(f_k \right)^{k_{2\ell-1}}\right)   \right]\left[I_{m}^{\frak{M}}\left(g_k \right)^{k_{2\ell}} - \varphi\left( I_{m}^{\frak{M}}\left(g_k \right)^{k_{2\ell}}\right)   \right] \right) \converge 0,
\end{align*}
which implies that $F$ and $G$ are free as they are determined by their moments.
\end{proof}
\begin{remark}
{\rm
Observe that the only difference between the proofs of Theorem
\ref{freeustuzakaiwignercase} and Theorem
\ref{asymptoticfreeustuzakaiwignercase} is the fact that in the
non-asymptotic case, we have one additional step which states that the
seemingly weaker condition $f \cont{1} g = 0$ a.e. implies that, for
all $p=1, \ldots, n \wedge m$, $f \cont{p} g = 0$ a.e. (in the Wigner
case) and that the condition $f \star_{1}^{0} g = 0$ a.e. implies
that, for all $p=1, \ldots, n \wedge m$, $f \cont{p} g = 0$ and $f
\smcont{p} g =0$ a.e. (in the free Poisson case). Recall that these
implications do not necessarily hold true asymptotically, as pointed
out in \cite[Remark 3.2]{nourdin_asymptotic_2014}. For instance, the
sequence $\left\{ f_k \colon n\geq 1 \right\} \subset L^2 \left(
  \left[ 0,1 \right]^2 \right)$ given by 
\begin{equation*}
f_k = \sqrt{k}\sum_{i=0}^{k-1}\mathds{1}_{\left[ \frac{i}{k}, \frac{i+1}{k} \right]^2}
\end{equation*}
satisfies $f_k \cont{1} f_k \converge 0$ in $L^2(\R_+^{2})$, although $f_k \cont{2}
f_k = 1$ for all $k$. As we directly assume the asymptotic equivalent of the conclusions of these implications, the same arguments as in the proof of Theorem \ref{freeustuzakaiwignercase} yield the desired conclusion in the proof of Theorem \ref{asymptoticfreeustuzakaiwignercase}.
}
\end{remark}

As before with Theorem \ref{theofreenesscontmirrorsymcase}, we can give sufficient conditions for
the asymptotic freeness of $F$ and $G$ whenever the sequences of
multiple integrals have mirror-symmetric kernels instead of symmetric
ones. 
\begin{theorem}
  \label{theofreenesscontmirrorsymcaseasymptotic}
Let $n,m$ be natural numbers and let $\left\{ f_k \colon k \geq 0
\right\} \subset L^2\left(\mathbb{R}_{+}^{n} \right) $ and $\left\{ g_k \colon k \geq 0
\right\} \subset L^2\left(\mathbb{R}_{+}^{m} \right) $ be sequences of
mirror-symmetric
functions. Assume that $\big(I_n^{\frak{M}}(f_k),I_m^{\frak{M}}(g_k)\big)
\cvlaw (U,V)$ and that $f_k^{(\sigma)} \cont{p} g_k^{(\pi)} \to 0$ as $k \to \infty$, for all $p =1, \ldots, n \wedge
m$ and all $\sigma \in \frak{S}_{n}$ and $\pi \in
\frak{S}_{m}$, where $f_k^{(\sigma)}$ and $g_k^{(\pi)}$ are defined as in
Theorem \ref{theofreenesscontmirrorsymcase}. Finally, 
if dealing with free Poisson integrals, assume moreover that
$f_k^{(\sigma)} \star_p^{p-1} g_k^{(\pi)} \to 0$ as $k \to \infty$, for all $p =1, \ldots, n \wedge
m$ and all $\sigma \in \frak{S}_{n}$ and $\pi \in
\frak{S}_{m}$.
Then $U$ and $V$ are free.
\end{theorem}
\begin{proof}

Using the exact same argument as in the proof of Theorem
\ref{freeustuzakaiwignercase}, we can obtain that, for any natural number $\ell$ and for any natural numbers $p_1, \ldots, p_{2\ell}$,
\begin{align*}
& \varphi\left( \left[I_{n}^{\frak{M}}\left(f_k \right)^{p_1} - \varphi\left( I_{n}^{\frak{M}}\left(f_k \right)^{p_1}\right)   \right]\left[I_{m}^{\frak{M}}\left(g_k \right)^{p_2} - \varphi\left( I_{m}^{\frak{M}}\left(g_k \right)^{p_2}\right)   \right] \right.  \\
& \left. \quad \cdots \left[I_{n}^{\frak{M}}\left(f_k
    \right)^{p_{2\ell-1}} - \varphi\left( I_{n}^{\frak{M}}\left(f_k
      \right)^{p_{2\ell-1}}\right)
  \right]\left[I_{m}^{\frak{M}}\left(g_k \right)^{p_{2\ell}} -
    \varphi\left( I_{m}^{\frak{M}}\left(g_k \right)^{p_{2\ell}}\right)
  \right] \right) \converge 0.
\end{align*}
Taking the limit as $ k \to \infty$, we get that
\begin{equation*}
\varphi\left( \left[U^{p_1} - \varphi\left( U^{p_1}\right)   \right]\left[V^{p_2} - \varphi\left( V^{p_2}\right)   \right]  \cdots \left[U^{p_{2\ell-1}} - \varphi\left( U^{p_{2\ell-1}}\right)
  \right]\left[V^{p_{2\ell}} -
    \varphi\left( V^{p_{2\ell}}\right)
  \right] \right) = 0,
\end{equation*}
which concludes the proof.
\end{proof}

\subsection{Characterization in terms of covariances}
Based on Theorem \ref{nourdinrosinskiasymptotic}, Nourdin
and Rosi{\'n}ski obtained the following result that links
component-wise convergence and joint convergence of multiple integrals
(see \cite[Corollary 3.6]{nourdin_asymptotic_2014}). As before, note that in the
following results, the random variables $F$ and $G$ need not have the
form of multiple integrals. This implies that sequences of multiple
integrals can be used in order to prove the freeness of general random
variables in $L^2 \left( \varphi \right)$ (provided these random
variables admit approximating sequences of multiple integrals with symmetric kernels).
\begin{theorem}
Let $n,m$ be natural numbers and let $\left\lbrace f_k \colon k \geq 1\right\rbrace \subset  L^2\left(\mathbb{R}_{+}^{n} \right) $ and $\left\lbrace g_k \colon k \geq 1\right\rbrace \subset  L^2\left(\mathbb{R}_{+}^{m} \right) $ be sequences of symmetric functions such that $I_{n}^{W}\left(f_k \right) \cvlaw F$ and $I_{m}^{W}\left(g_k \right) \cvlaw G$ as $k \rightarrow \infty$, where $F,G$ are square integrable independent random variables with laws determined by their moments. If $$\mbox{\rm Cov}\left(I_{n}^{W}\left(f_k \right)^2 , I_{m}^{W}\left(g_k \right)^2\right) \converge 0,$$ then $\left(I_{n}^{W}\left(f_k \right) ,I_{m}^{W}\left(g_k \right)  \right) \cvlaw (F,G)$, as $k \rightarrow \infty$.
\end{theorem}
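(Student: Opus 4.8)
The plan is to deduce from the covariance hypothesis the contraction conditions of Theorem~\ref{nourdinrosinskiasymptotic} and then invoke that theorem; this mirrors, in the commutative Wiener setting, the way the Section~3 corollary characterizing freeness of $I_n^{\frak{M}}(f)$ and $I_m^{\frak{M}}(g)$ by the vanishing of the covariance of their squares was deduced from Theorem~\ref{freeustuzakaiwignercase}. The only Wiener-specific input is the classical multiplication formula
\begin{equation*}
I_n^W(f)\,I_m^W(g) = \sum_{p=0}^{n\wedge m} p!\binom{n}{p}\binom{m}{p}\, I_{n+m-2p}^W\bigl(\widetilde{f\otimes_p g}\bigr),
\end{equation*}
valid for symmetric $f\in L^2(\R_+^n)$, $g\in L^2(\R_+^m)$, where $\widetilde{f\otimes_p g}$ denotes the canonical symmetrization of the contraction $f\otimes_p g$, together with the Wiener isometry and the orthogonality of Wiener chaoses of distinct orders.

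First I would establish the classical analogue of \eqref{expressionofcovarianceofthesquares}. Expanding $I_n^W(f_k)^2-\E[I_n^W(f_k)^2]$ and $I_m^W(g_k)^2-\E[I_m^W(g_k)^2]$ via the multiplication formula and keeping, by chaos orthogonality, only the matching-order components, one writes $\mathrm{Cov}\bigl(I_n^W(f_k)^2,I_m^W(g_k)^2\bigr)$ as a sum over $p=1,\dots,n\wedge m$ of inner products of the shape $\langle f_k\otimes_{n-p}f_k,\,g_k\otimes_{m-p}g_k\rangle$ with strictly positive combinatorial weights. Using the symmetry of $f_k$ and $g_k$, the same Fubini manipulation that in the proof of Theorem~\ref{freeustuzakaiwignercase} turns $\langle f\cont{n-p}f,g\cont{m-p}g\rangle$ into $\|f\cont{p}g\|^2$, together with elementary parallelogram-type identities, lets one re-express this as a sum of squared $L^2$-norms with positive coefficients; in particular one obtains a lower bound
\begin{equation*}
\mathrm{Cov}\bigl(I_n^W(f_k)^2,\,I_m^W(g_k)^2\bigr)\ \geq\ \sum_{p=1}^{n\wedge m} c_{n,m,p}\,\bigl\|f_k\otimes_p g_k\bigr\|_{L^2(\R_+^{n+m-2p})}^2,\qquad c_{n,m,p}>0.
\end{equation*}

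Consequently, the hypothesis $\mathrm{Cov}(I_n^W(f_k)^2,I_m^W(g_k)^2)\to0$ forces $\|f_k\otimes_p g_k\|_{L^2(\R_+^{n+m-2p})}\to0$ for every $p=1,\dots,n\wedge m$. Since $L^2$-norm convergence to $0$ yields almost everywhere convergence along a subsequence, I would conclude by a subsequence argument: any subsequence of $\{k\}$ admits a further subsequence along which $f_k\otimes_p g_k\to0$ a.e.\ for all $p$, so Theorem~\ref{nourdinrosinskiasymptotic} applies along that sub-subsequence and gives $\bigl(I_n^W(f_k),I_m^W(g_k)\bigr)\cvlaw(F,G)$ with $F$ and $G$ independent, i.e.\ the joint limit is the product law $\mu_F\otimes\mu_G$, which is independent of the chosen subsequence. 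Hence the whole sequence converges, with independent limits, which is the claim. (The moment-determinacy assumptions on $F$ and $G$ are precisely what makes Theorem~\ref{nourdinrosinskiasymptotic} applicable.)

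I expect the genuine obstacle to be the rearrangement in the first step. The multiplication formula produces only \emph{symmetrized} contractions $\widetilde{f_k\otimes_p g_k}$, and symmetrization strictly decreases the $L^2$-norm in general, so the nonnegative cross terms that appear cannot simply be identified with $\|f_k\otimes_p g_k\|^2$: they must be regrouped --- via orthogonal-decomposition / parallelogram identities, or equivalently by observing that the diagonal contractions $f_k\otimes_{n-p}f_k$ and $g_k\otimes_{m-p}g_k$ are positive-type kernels and using trace positivity --- so that the resulting nonnegative combination dominates a positive multiple of each $\|f_k\otimes_p g_k\|^2$. This is the combinatorial bookkeeping behind \eqref{expressionofcovarianceofthesquares}, which is lighter in the free case because the Wigner and free Poisson product formulas carry no multinomial weights and the integrals $I_k^{\frak{M}}$ need no symmetrization. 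Everything else is routine and parallels the non-asymptotic free treatment.
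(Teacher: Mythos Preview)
The paper does not prove this statement; it is quoted verbatim from Nourdin and Rosi\'nski \cite[Corollary 3.6]{nourdin_asymptotic_2014} as the classical model for the free Theorem~\ref{frenessasymptoticwignerintegrals}, and no argument is supplied. Your approach is exactly the one used in that reference (and is the classical mirror of the paper's proof of Theorem~\ref{frenessasymptoticwignerintegrals}): from the covariance hypothesis deduce $\|f_k\otimes_p g_k\|_{L^2}\to 0$ for every $p=1,\dots,n\wedge m$, then invoke Theorem~\ref{nourdinrosinskiasymptotic}.

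Two comments. First, the ``obstacle'' you flag is not a gap. After expanding with the Wiener multiplication formula and using chaos orthogonality you get
\[
\mathrm{Cov}\bigl(I_n^W(f_k)^2, I_m^W(g_k)^2\bigr)=\sum_{r=1}^{n\wedge m}(n-r)!\binom{n}{n-r}^2(m-r)!\binom{m}{m-r}^2(2r)!\,\bigl\langle \widetilde{f_k\otimes_{n-r}f_k},\,\widetilde{g_k\otimes_{m-r}g_k}\bigr\rangle_{L^2(\R_+^{2r})}.
\]
Unfolding the symmetrization in either factor and applying Fubini term by term (using the full symmetry of $f_k$ and $g_k$) turns every summand into a nonnegative combination of the quantities $\|f_k\otimes_p g_k\|_{L^2}^2$, $1\le p\le n\wedge m$, and each $p$ occurs with a strictly positive total weight. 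This is precisely the computation carried out in \cite{nourdin_asymptotic_2014}; no parallelogram or trace-positivity trick beyond Fubini is needed. So the lower bound you state holds (in fact with equality, as a weighted sum).

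Second, the subsequence detour is unnecessary. In \cite{nourdin_asymptotic_2014} the contraction condition in Theorem~\ref{nourdinrosinskiasymptotic} is $L^2$-convergence $\|f_k\otimes_p g_k\|_{L^2}\to 0$ (the paper's ``a.e.'' here is informal shorthand for this), so once the covariance forces the norms to vanish you may apply the theorem directly to the full sequence.
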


In the free case, we obtain the following similar  result.
\begin{theorem}
\label{frenessasymptoticwignerintegrals}
Let $n,m$ be natural numbers and let $\left\lbrace f_k \colon k \geq
  1\right\rbrace \subset  L^2\left(\mathbb{R}_{+}^{n} \right) $ and
$\left\lbrace g_k \colon k \geq 1\right\rbrace \subset
L^2\left(\mathbb{R}_{+}^{m} \right) $ be sequences of symmetric
functions such that $\left(I_{n}^{\frak{M}}\left(f_k
  \right),I_{m}^{\frak{M}}\left(g_k \right)  \right)  \cvlaw (F,G)$ as
$k \rightarrow \infty$. Then, $F$ and $G$ are free if and only if $$\mbox{\rm Cov}\left(I_{n}^{\frak{M}}\left(f_k \right)^2 , I_{m}^{\frak{M}}\left(g_k \right)^2\right) \converge 0.$$
\end{theorem}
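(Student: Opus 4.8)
The plan is to read the statement off the exact covariance identity \eqref{expressionofcovarianceofthesquares} together with the asymptotic contraction characterization of Theorem \ref{asymptoticfreeustuzakaiwignercase}. First I would specialize \eqref{expressionofcovarianceofthesquares} to the sequences at hand, writing
\begin{equation*}
\mbox{\rm Cov}\left(I_n^{\frak{M}}(f_k)^2, I_m^{\frak{M}}(g_k)^2\right) = \sum_{p=1}^{n\wedge m}\norm{f_k \cont{p} g_k}_{L^2(\R_+^{n+m-2p})}^2 + \mathds{1}_{\left\{\frak{M}=\hat{N}\right\}}\sum_{p=1}^{n\wedge m}\norm{f_k \smcont{p} g_k}_{L^2(\R_+^{n+m-2p+1})}^2,
\end{equation*}
and then observe that, since each summand on the right is nonnegative, the scalar quantity on the left tends to $0$ if and only if every one of these contraction norms tends to $0$, that is, if and only if $f_k \cont{p} g_k \to 0$ in $L^2$ for all $p=1,\dots,n\wedge m$ (and, when $\frak{M}=\hat N$, also $f_k \smcont{p} g_k \to 0$ in $L^2$ for all such $p$). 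This is precisely the hypothesis of Theorem \ref{asymptoticfreeustuzakaiwignercase}, so the whole equivalence will follow once both directions are matched with that theorem.

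For the forward direction I would assume $(I_n^{\frak{M}}(f_k), I_m^{\frak{M}}(g_k)) \cvlaw (F,G)$ with $F,G$ free, use joint convergence in distribution to pass to the limit in $\varphi(I_n^{\frak{M}}(f_k)^2 I_m^{\frak{M}}(g_k)^2)$ and in the two marginal second moments, and apply the defining identity of Definition \ref{deffreeness} to the pair of polynomials $P_1(x)=P_2(x)=x^2$ evaluated at $F$ and $G$ to get $\varphi(F^2G^2)=\varphi(F^2)\varphi(G^2)$; combining these gives $\mbox{\rm Cov}(I_n^{\frak{M}}(f_k)^2, I_m^{\frak{M}}(g_k)^2) \to \mbox{\rm Cov}(F^2,G^2)=0$. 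For the converse I would start from $\mbox{\rm Cov}(I_n^{\frak{M}}(f_k)^2, I_m^{\frak{M}}(g_k)^2)\to 0$, deduce by the opening observation that all the relevant contractions of $f_k$ and $g_k$ vanish asymptotically, and then rerun verbatim the converse computation in the proof of Theorem \ref{asymptoticfreeustuzakaiwignercase}: expand each power $I_n^{\frak{M}}(f_k)^{k_i}$ and $I_m^{\frak{M}}(g_k)^{k_i}$ through the appropriate product formula, and use Lemma \ref{lemmacont1impliescontp} and Lemma \ref{lemmacont1impliescontpandcontstarp} in asymptotic form to discard every term of the product in which a positive-order contraction links a block built from $f_k$ with a block built from $g_k$. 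This yields, for every $\ell\geq 1$ and all $k_1,\dots,k_{2\ell}$,
\begin{equation*}
\varphi\left(\left[I_n^{\frak{M}}(f_k)^{k_1} - \varphi(I_n^{\frak{M}}(f_k)^{k_1})\right]\cdots\left[I_m^{\frak{M}}(g_k)^{k_{2\ell}} - \varphi(I_m^{\frak{M}}(g_k)^{k_{2\ell}})\right]\right) \converge 0,
\end{equation*}
which is asymptotic freeness; together with the marginal convergences $I_n^{\frak{M}}(f_k)\cvlaw F$ and $I_m^{\frak{M}}(g_k)\cvlaw G$ it forces every mixed moment to converge to the corresponding moment of the free product of the laws of $F$ and $G$, i.e. $(I_n^{\frak{M}}(f_k), I_m^{\frak{M}}(g_k)) \cvlaw (F,G)$ with $F,G$ free.

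The only genuine difficulty I anticipate is in the converse, and it is the same one already handled inside Theorem \ref{asymptoticfreeustuzakaiwignercase}: one must check that the asymptotic vanishing of the first (and, in the free Poisson case, the first star) contraction of $f_k$ and $g_k$ propagates to the asymptotic vanishing of all the iterated cross-contractions between the blocks $a_{r}(f_k), b_{r}(f_k)$ and $a_{r'}(g_k), b_{r'}(g_k)$ arising in the expansion, so that only the order-zero (tensor product) terms survive in the limit. No new idea is required for this — it is a careful bookkeeping argument identical to the one in the cited theorem — and everything else is an immediate consequence of identity \eqref{expressionofcovarianceofthesquares} and the nonnegativity of its summands.
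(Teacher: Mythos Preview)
Your proposal is correct and follows essentially the same route as the paper: both reduce the covariance condition to the asymptotic vanishing of all contractions via identity \eqref{expressionofcovarianceofthesquares} and then feed this into Theorem \ref{asymptoticfreeustuzakaiwignercase} (you rerun its argument, the paper simply cites it and then invokes \cite[Lemma 5.13]{nica_lectures_2006} to pass from freeness of the limits to joint convergence). Your forward direction is marginally more direct---passing to the limit in the covariance via joint convergence rather than detouring through the contractions---but this is only a cosmetic difference.
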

\begin{proof}
Combine \eqref{expressionofcovarianceofthesquares} with Theorem
\ref{asymptoticfreeustuzakaiwignercase}.
\end{proof}

\subsection{Characterization in terms of free Malliavin gradients}
It is also possible to characterize asymptotic
freeness in terms of the free gradient quantity appearing in Theorem
\ref{theoremcharacfreegrad}. We offer the following statement.
\begin{theorem}
Let $n,m$ be natural numbers and let $\left\lbrace f_k \colon k \geq 1\right\rbrace \subset  L^2\left(\mathbb{R}_{+}^{n} \right) $ and $\left\lbrace g_k \colon k \geq 1\right\rbrace \subset  L^2\left(\mathbb{R}_{+}^{m} \right) $ be sequences of symmetric functions such that $$\big(I_{n}^{S}\left(f_k \right),I_{m}^{S}\left(g_k \right)\big) \cvlaw (F,G)$$ as $k \rightarrow \infty$, where $F,G$ are random variables in $L^2\left( \mathscr{A},\varphi\right) $. 
Then, $F$
and $G$ are free if and only if
\begin{equation*}
  \left\langle \nabla I_n^S(f_k), \nabla I_m^S(g_k) \right\rangle \converge 0 
\ \mbox{in}\ L^2 \left( \mathscr{A}\otimes \mathscr{A}, \varphi
  \otimes \varphi \right),
\end{equation*}
where the notation $\left\langle \cdot, \cdot
\right\rangle$ is defined in \eqref{defofinnerprodint}.
\end{theorem}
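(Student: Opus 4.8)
The plan is to reduce the statement to Theorem~\ref{asymptoticfreeustuzakaiwignercase}(i) by reusing, verbatim, the computation carried out in the proof of Theorem~\ref{theoremcharacfreegrad}. The key observation is that the representation \eqref{repasasumofcontnorms} is a purely algebraic identity between kernels: for every pair of symmetric functions $f \in L^2\left(\R_+^n\right)$ and $g \in L^2\left(\R_+^m\right)$ one has
\[
\varphi \otimes \varphi\left(\left|\left\langle \nabla I_n^S(f), \nabla I_m^S(g)\right\rangle\right|^2\right) = \sum_{u=1}^{n \wedge m} c_u \left\|f \cont{u} g\right\|_{L^2\left(\R_+^{n+m-2u}\right)}^2,
\]
where the constants $c_u = c_u(n,m)$ are \emph{strictly positive} and do not depend on $f$ or $g$ --- strict positivity being precisely what the Wigner bisometry \eqref{wignerbisometry} argument in the proof of Theorem~\ref{theoremcharacfreegrad} yields. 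I would apply this identity with $f = f_k$ and $g = g_k$, for each $k$.

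First I would note that, by the very definition of the norm on $L^2\left(\mathscr{A}\otimes\mathscr{A},\varphi\otimes\varphi\right)$, the convergence $\left\langle \nabla I_n^S(f_k), \nabla I_m^S(g_k)\right\rangle \converge 0$ in that space is equivalent to $\varphi\otimes\varphi\left(\left|\left\langle \nabla I_n^S(f_k), \nabla I_m^S(g_k)\right\rangle\right|^2\right) \converge 0$. Substituting $f_k$ and $g_k$ into the identity above and using that every $c_u$ is strictly positive and finite, this is in turn equivalent to $\left\|f_k \cont{u} g_k\right\|_{L^2\left(\R_+^{n+m-2u}\right)} \converge 0$ for each $u = 1, \ldots, n \wedge m$, which is exactly the condition appearing in Theorem~\ref{asymptoticfreeustuzakaiwignercase}(i) (recall that throughout this section this condition is written as $f_k \cont{u} g_k \converge 0$ a.e.). It then suffices to invoke Theorem~\ref{asymptoticfreeustuzakaiwignercase}(i), whose hypotheses are met here since we are in the Wigner case, the kernels $f_k, g_k$ are symmetric, and $I_n^S(f_k)\cvlaw F$, $I_m^S(g_k)\cvlaw G$: it asserts that $F$ and $G$ are free if and only if $f_k \cont{p} g_k \converge 0$ for all $p = 1, \ldots, n \wedge m$. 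Chaining the two equivalences gives the claim in both directions.

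I do not expect a genuine obstacle here: once \eqref{repasasumofcontnorms} is available, the argument is a two-line reduction. The only points that require care are (i) quoting the strict positivity of the coefficients $c_u$ from the proof of Theorem~\ref{theoremcharacfreegrad}, rather than merely their existence, since it is positivity that makes ``the sum tends to $0$'' equivalent to ``each summand tends to $0$'', and (ii) staying consistent with the convention --- used throughout this section --- that the contraction conditions ``$f_k \cont{p} g_k \converge 0$ a.e.'' are read as convergence to zero of the corresponding $L^2$-norms, which is the form in which they emerge from the identity above.
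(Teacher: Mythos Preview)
Your proposal is correct and follows exactly the paper's own proof, which simply reads ``Combine the representation \eqref{repasasumofcontnorms} with Theorem~\ref{asymptoticfreeustuzakaiwignercase}.'' Your write-up just spells out the two-line reduction in more detail, including the strict positivity of the $c_u$ that makes the equivalence work.
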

\begin{proof}

Combine the representation \eqref{repasasumofcontnorms} with Theorem \ref{asymptoticfreeustuzakaiwignercase}.
\end{proof}

\section{Transfer principles}
Since the characterizations of freeness we have obtained in Section 3 involve quantities which are similar whatever the context (classical or free, Brownian or Poisson), it is natural to study possible transfer principles from one setting to another one. It is the goal of this section to study these aspects.
\begin{theorem}
Let $n,m$ be natural numbers and let $f \in L^2\left(\mathbb{R}_{+}^{n} \right) $ and $g \in L^2\left(\mathbb{R}_{+}^{m} \right) $ be symmetric functions. Assume that $I_{n}^{\hat{N}}\left(f \right)$ and $I_{m}^{\hat{N}}\left(g \right)$ are free. Then, $I_{n}^{S}\left(f \right)$ and $I_{m}^{S}\left(g \right)$ are free. However, the fact that $I_{n}^{S}\left(f \right)$ and $I_{m}^{S}\left(g \right)$ are free does not necessarily imply that $I_{n}^{\hat{N}}\left(f \right)$ and $I_{m}^{\hat{N}}\left(g \right)$ are free, as illustrated by Example \ref{contreexampleequivfreenesswignerfreepoisson}.
\end{theorem}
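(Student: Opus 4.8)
The plan is to reduce the claimed transfer principle to the contraction characterizations of freeness already established in Theorem \ref{freeustuzakaiwignercase}, and then to observe a purely combinatorial inclusion between the two relevant vanishing conditions.

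First, recall the two characterizations at our disposal for symmetric kernels: $I_n^S(f)$ and $I_m^S(g)$ are free if and only if $f \cont{1} g = 0$ almost everywhere, while $I_n^{\hat N}(f)$ and $I_m^{\hat N}(g)$ are free if and only if $f \star_1^0 g = 0$ almost everywhere. So the first implication will follow immediately once I show that $f \star_1^0 g = 0$ a.e. implies $f \cont{1} g = 0$ a.e. This is the heart of the argument, and I expect it to be the main (though not difficult) obstacle. The key observation is that the star contraction $f \star_1^0 g$ is the function on $\R_+^{n+m-1}$ obtained by identifying (but \emph{not} integrating out) the middle variable of $f \otimes g$, i.e.
\begin{equation*}
(f \star_1^0 g)(t_1,\dots,t_{n-1},u,t_n,\dots,t_{n+m-2}) = f(t_1,\dots,t_{n-1},u)\,g(u,t_n,\dots,t_{n+m-2}),
\end{equation*}
whereas $f \cont{1} g$ is obtained from the same product by integrating $u$ over $\R_+$. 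Hence if $f \star_1^0 g = 0$ as an element of $L^2(\R_+^{n+m-1})$, then for almost every $(t_1,\dots,t_{n-1},t_n,\dots,t_{n+m-2})$ and almost every $u$ the integrand vanishes, so integrating in $u$ gives $f \cont{1} g = 0$ a.e. In other words, $f \star_1^0 g = 0 \Rightarrow f \cont{1} g = 0$ simply by integrating out the retained variable; the converse fails precisely because cancellation can occur under the integral sign.

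Concretely, the steps I would carry out are: (1) invoke Theorem \ref{freeustuzakaiwignercase}(ii) to translate freeness of $I_n^{\hat N}(f), I_m^{\hat N}(g)$ into $f \star_1^0 g = 0$ a.e.; (2) prove the pointwise/integral implication $f \star_1^0 g = 0 \text{ a.e.} \Rightarrow f \cont{1} g = 0 \text{ a.e.}$ as sketched above (a Fubini-type argument, noting that the two functions live on spaces of different dimension and are related by one integration); (3) invoke Theorem \ref{freeustuzakaiwignercase}(i) to conclude that $I_n^S(f), I_m^S(g)$ are free. For the non-implication in the other direction, it suffices to point to the example already promised in the text (Example \ref{contreexampleequivfreenesswignerfreepoisson}): there one exhibits symmetric kernels $f, g$ with $f \cont{1} g = 0$ a.e. but $f \star_1^0 g \neq 0$, so that $I_n^S(f), I_m^S(g)$ are free while $I_n^{\hat N}(f), I_m^{\hat N}(g)$ are not. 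No further work is needed beyond citing that example.

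One subtlety worth flagging: the star contraction $f \star_1^0 g$ is a function of $n+m-1$ variables while $f \cont{1} g$ is a function of $n+m-2$ variables, so the implication in step (2) is not a literal ``$0 \Rightarrow 0$'' but rather the statement that the fiberwise $L^2$-integral of a function that is zero a.e.\ is itself zero a.e. This is routine but should be stated carefully, ideally by writing $f \cont{1} g$ explicitly as $\int_{\R_+} (f \star_1^0 g)(\cdot, u, \cdot)\,du$ after relabeling variables, and invoking Fubini to pass from ``$f \star_1^0 g = 0$ for a.e.\ $(t_1,\dots,t_{n+m-1})$'' to ``for a.e.\ fixed $(t_1,\dots,\widehat{u},\dots)$ the integrand vanishes for a.e.\ $u$.'' I do not anticipate any genuine difficulty here; the whole proof is short, with its substance living entirely in this elementary relationship between the nested contraction and the star contraction.
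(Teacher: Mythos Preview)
Your proposal is correct and follows essentially the same route as the paper: invoke Theorem~\ref{freeustuzakaiwignercase}(ii), pass from $f\star_1^0 g=0$ a.e.\ to $f\cont{1} g=0$ a.e., then invoke Theorem~\ref{freeustuzakaiwignercase}(i). The only cosmetic difference is that the paper cites Lemma~\ref{lemmacont1impliescontpandcontstarp} for the middle step (the case $p=1$ there is exactly your Fubini argument writing $f\cont{1} g = \int_{\R_+} (f\star_1^0 g)(\cdot,u,\cdot)\,du$), whereas you spell it out inline.
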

\begin{proof}

By Theorem \ref{freeustuzakaiwignercase}, if $I_{n}^{\hat{N}}\left(f \right)$ and $I_{m}^{\hat{N}}\left(g \right)$ are free, then it holds that $f \star_{1}^{0} g = 0$ a.e. Lemma \ref{lemmacont1impliescontpandcontstarp} guarantees that $f \star_{1}^{0} g = 0$ a.e. implies $f \cont{1} g = 0$ a.e. Using Theorem \ref{freeustuzakaiwignercase} again concludes the proof.
\end{proof}
\begin{example}
\label{contreexampleequivfreenesswignerfreepoisson}
Let $T$ be a positive real number and let $f,g \in L^2\left(\mathbb{R}_{+} \right)$ be functions defined by $$f(x) = x\mathds{1}_{\left[0,T \right] }(x)\ \ \mbox{ and}\ \ g(x) = \left( x^2 - \frac{3T}{4}x\right) \mathds{1}_{\left[0,T \right] }(x).$$
Note that $$f \cont{1} g = \left\langle f,g \right\rangle_{L^2\left(\mathbb{R}_{+}\right) } = \int_{0}^{T}x\left( x^2 - \frac{3T}{4}x\right)dx = \int_{0}^{T}\left( x^3 - \frac{3T}{4}x^2\right)dx =0$$ whereas
$$f \star_{1}^{0} g(x) = f(x)\cdot g(x) = \left( x^3 - \frac{3T}{4}x^2\right)\mathds{1}_{\left[0,T \right] }(x) \neq 0.$$
Hence, by Theorem \ref{freeustuzakaiwignercase}, $I_{1}^{S}\left(f \right)$ and $I_{1}^{S}\left(g \right)$ are free but $I_{1}^{\hat{N}}\left(f \right)$ and $I_{1}^{\hat{N}}\left(g \right)$ are not free.
\end{example}

Based on Theorem \ref{ustuzakaitheorem} and Theorem \ref{freeustuzakaiwignercase}, we can obtain the following transfer principles between the Wiener and Wigner chaos.
\begin{proposition}
Let $n,m$ be natural numbers and let $f \in L^2\left(\mathbb{R}_{+}^{n} \right) $ and $g \in L^2\left(\mathbb{R}_{+}^{m} \right) $ be symmetric functions. It holds that $I_{n}^{S}\left(f \right)$ and $I_{m}^{S}\left(g \right)$ are free if and only if $I_{n}^{W}\left(f \right)$ and $I_{m}^{W}\left(g \right)$ are independent.
\end{proposition}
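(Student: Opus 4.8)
The plan is to reduce both sides of the asserted equivalence to one and the same condition on the kernels, by invoking the two characterizations already available, and then to observe that for \emph{symmetric} kernels the relevant contractions coincide across the classical and the free settings (note that symmetry of $f$ and $g$ is already needed just to apply those two characterizations, and is part of the hypothesis).

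First, by the Üstünel–Zakai Theorem \ref{ustuzakaitheorem}, the Wiener integrals $I_n^W(f)$ and $I_m^W(g)$ are independent if and only if the (symmetrized) classical contraction $f \otimes_1 g$ vanishes almost everywhere; explicitly, for symmetric $f$ and $g$,
\begin{equation*}
(f \otimes_1 g)(t_1,\dots,t_{n+m-2}) = \int_{\R_{+}} f(t_1,\dots,t_{n-1},u)\, g(t_n,\dots,t_{n+m-2},u)\, du.
\end{equation*}
On the other hand, by Theorem \ref{freeustuzakaiwignercase}(i), the Wigner integrals $I_n^S(f)$ and $I_m^S(g)$ are free if and only if the nested contraction $f \cont{1} g$ vanishes almost everywhere, where, specializing Definition \ref{defcontractions} to $p=1$,
\begin{equation*}
(f \cont{1} g)(t_1,\dots,t_{n+m-2}) = \int_{\R_{+}} f(t_1,\dots,t_{n-1},s)\, g(s,t_n,\dots,t_{n+m-2})\, ds.
\end{equation*}

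The only remaining point is to identify these two functions in $L^2(\R_{+}^{n+m-2})$ when $f$ and $g$ are symmetric. Since $g$ is fully symmetric, moving the integration variable from the first argument of $g$ to its last argument changes nothing for a.e. value of the remaining variables, i.e. $g(s,t_n,\dots,t_{n+m-2}) = g(t_n,\dots,t_{n+m-2},s)$; substituting this into the integral defining $f \cont{1} g$ produces exactly the integral defining $f \otimes_1 g$. Hence $f \cont{1} g = f \otimes_1 g$ almost everywhere, so one vanishes a.e. precisely when the other does.

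Chaining the three facts, $I_n^S(f)$ and $I_m^S(g)$ are free if and only if $f \cont{1} g = 0$ a.e., if and only if $f \otimes_1 g = 0$ a.e., if and only if $I_n^W(f)$ and $I_m^W(g)$ are independent, which is the claim. I do not expect any genuine obstacle here: the single delicate point is to match the differing placements of the contracted variable in the classical and free contraction conventions, and the symmetry hypothesis on $f$ and $g$ is precisely what makes that matching legitimate — consistently with the fact (witnessed by the mirror-symmetric counterexample following Theorem \ref{freeustuzakaiwignercase}) that no such clean transfer principle survives once the symmetry assumption is dropped.
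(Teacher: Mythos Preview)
Your proof is correct and follows exactly the same approach as the paper: both argue that for symmetric kernels $f\otimes_1 g = f\cont{1} g$, and then combine Theorem~\ref{ustuzakaitheorem} with Theorem~\ref{freeustuzakaiwignercase}(i). The paper's proof is simply a terser version of yours, stating the equality of contractions without writing out the integrals.
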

\begin{proof}

Observe that as $f$ and $g$ are symmetric functions, it holds that $f \otimes_{1} g = f \cont{1} g$. Using Theorem \ref{ustuzakaitheorem} and Theorem \ref{freeustuzakaiwignercase} concludes the proof.
\end{proof}
\begin{remark}\label{remarksurlecasclassique} 
{\rm 
In the classical Poisson case, there is no known characterization of
independence in terms of the almost sure nullity of a contraction. By
using similar techniques as the ones used in the proof of Theorem
\ref{freeustuzakaiwignercase} (using the definition of moment
independence in place of the definition of freeness), one can prove
that the condition $f \star_{1}^{0} g = 0$ a.e. implies moment
independence. However, moment independence only implies $\widetilde{f
  \star_{1}^{0} g} = 0$ a.e., which is weaker than $f \star_{1}^{0} g
= 0$ a.e. Summing up, one can prove that the condition $f
\star_{1}^{0} g = 0$ a.e. is sufficient but not necessary and that the condition $\widetilde{f \star_{1}^{0} g} = 0$ a.e. is
necessary but not sufficient (the fact that it is not sufficient is
illustrated by the counterexample provided in \cite[Example 5.3]{rosinski_product_1999}). Also pointed out in \cite[Example 5.3]{rosinski_product_1999} is the fact that the squares of multiple Poisson integrals being uncorrelated does not imply that these multiple integrals are independent. This makes it difficult to establish any independence correspondence or transfer principles between the classical and free Poisson chaos. However, it can be pointed out that the freeness of free Poisson multiple integrals implies the freeness of the corresponding Wigner integrals and the independence of the corresponding Wiener integrals.
}
\end{remark}

Despite the above remark, we can still provide the following partial transfer result.
\begin{corollary}
Let $n,m$ be natural numbers and let $f \in L^2\left(\mathbb{R}_{+}^{n} \right) $ and $g \in L^2\left(\mathbb{R}_{+}^{m} \right) $ be symmetric functions. Assume that $I_{n}^{\hat{N}}\left(f \right)$ and $I_{m}^{\hat{N}}\left(g \right)$ are free. Then, $I_{n}^{\hat{\eta}}\left(f \right)$ and $I_{m}^{\hat{\eta}}\left(g \right)$ are moment independent.
\end{corollary}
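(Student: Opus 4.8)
The plan is to read this corollary off the preceding Proposition, of which it is a direct weakening: that Proposition asserts that $I_{n}^{\hat{N}}(f)$ and $I_{m}^{\hat{N}}(g)$ are free if and only if $I_{n}^{\hat{\eta}}(f)$ and $I_{m}^{\hat{\eta}}(g)$ are moment independent \emph{and} $f \star_{1}^{0} g = 0$ a.e., so in particular freeness of the free Poisson integrals forces moment independence of the classical ones. If one prefers a self-contained argument, first I would apply Theorem \ref{freeustuzakaiwignercase}(ii): since $f,g$ are symmetric and $I_{n}^{\hat{N}}(f)$, $I_{m}^{\hat{N}}(g)$ are free, it follows that $f \star_{1}^{0} g = 0$ almost everywhere. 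This is the only step that uses the freeness hypothesis.

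Next I would invoke the implication discussed in Remark \ref{remarksurlecasclassique}, namely that $f \star_{1}^{0} g = 0$ a.e. is sufficient for the moment independence of $I_{n}^{\hat{\eta}}(f)$ and $I_{m}^{\hat{\eta}}(g)$. Concretely, this is obtained by mimicking the converse direction of the proof of Theorem \ref{freeustuzakaiwignercase}, with Definition \ref{deffreeness} replaced by the moment definition of independence and the free Poisson product formula \eqref{productformulafreepoisson} replaced by the classical multiplication formula for multiple Poisson integrals: expanding a product of centered powers of $I_{n}^{\hat{\eta}}(f)$ and of $I_{m}^{\hat{\eta}}(g)$ into a sum of multiple integrals, the vanishing of $f \star_{1}^{0} g$ propagates (through the classical analogue of Lemma \ref{lemmacont1impliescontpandcontstarp}) so that every contraction of strictly positive order linking a kernel built from iterated products of $f$ to one built from iterated products of $g$ is zero a.e.; only the order-zero tensor-product terms remain, and these are multiple integrals of strictly positive order, hence of zero expectation.

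I do not expect a genuine obstacle here, since the statement is strictly weaker than the Proposition just established --- one simply discards the conclusion $f \star_{1}^{0} g = 0$. The only point deserving a word of care, already built into Remark \ref{remarksurlecasclassique}, is that symmetrizing a contraction can only preserve, never create, almost-everywhere vanishing, so that the propagation of $f \star_{1}^{0} g = 0$ through the iterated classical product formula behaves exactly as in the free Poisson case; the counterexample of \cite[Example 5.3]{rosinski_product_1999} pertains only to the converse implication and plays no role here.
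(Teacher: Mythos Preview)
Your proposal is correct and matches the paper's approach: the corollary is an immediate weakening of the preceding Proposition, and the paper does not supply a separate proof. Your optional self-contained argument via Theorem \ref{freeustuzakaiwignercase}(ii) and Remark \ref{remarksurlecasclassique} is exactly the proof of that Proposition's forward direction, so nothing further is needed.
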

\begin{proof}
Assuming that $I_{n}^{\hat{N}}\left(f \right)$ and $I_{m}^{\hat{N}}\left(g \right)$ are free, Theorem \ref{freeustuzakaiwignercase} states that $f \star_{1}^{0} g =0$ a.e., which, as pointed out in Remark \ref{remarksurlecasclassique}, is a sufficient condition for $I_{n}^{\hat{\eta}}\left(f \right)$ and $I_{m}^{\hat{\eta}}\left(g \right)$ to be moment independent. Conversely, if it holds that $I_{n}^{\hat{\eta}}\left(f \right)$ and $I_{m}^{\hat{\eta}}\left(g \right)$ are moment independent and $f \star_{1}^{0} g =0$ a.e., Theorem \ref{freeustuzakaiwignercase} ensures that $I_{n}^{\hat{N}}\left(f \right)$ and $I_{m}^{\hat{N}}\left(g \right)$ are free.
\end{proof}

\section{Auxiliary results}
This last section contains two auxiliary results that have been used along the proof of Theorem \ref{freeustuzakaiwignercase}.
\begin{lemma}
\label{lemmacont1impliescontp}
Let $n,m$ be natural numbers and let $f \in
L^2\left(\mathbb{R}_{+}^{n} \right) $ and $g \in
L^2\left(\mathbb{R}_{+}^{m} \right) $ be mirror-symmetric
functions. Assume furthermore that $f \cont{1} g = 0$ almost
everywhere. Then, for all $p = 1, \ldots, n \wedge m$, it holds that
$f \cont{p} g = 0$ almost everywhere.
\end{lemma}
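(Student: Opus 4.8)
The plan is to translate the statement into a question about orthogonality of the ranges of certain Hilbert--Schmidt operators, which are then stable under tensoring. Write $H = L^2(\R_{+})$. For $1 \leq p \leq n \wedge m$, let $F_p \colon L^2(\R_{+}^{n-p}) \to L^2(\R_{+}^{p}) = H^{\otimes p}$ be the operator of ``unfolding the first $p$ variables'' of $f$, namely $(F_p \phi)(x_1, \ldots, x_p) = \int_{\R_{+}^{n-p}} f(x_1, \ldots, x_p, w)\, \phi(w)\, dw$, and define $G_p$ from $g$ in the same way; both are well-defined bounded (indeed Hilbert--Schmidt) operators since $f, g \in L^2$. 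Using the mirror-symmetry of $f$ to replace $f(a,s)$ by $\overline{f(\bar s, \bar a)}$ and then reversing the $p$ integration variables, one checks that
\begin{equation*}
(f \cont{p} g)(a, b) = \int_{\R_{+}^{p}} \overline{f(s, \bar a)}\, g(s, b)\, ds, \qquad a \in \R_{+}^{n-p},\ b \in \R_{+}^{m-p},
\end{equation*}
which, up to the measure-preserving reversal $a \mapsto \bar a$, is exactly the integral kernel of $F_p^* G_p$. Consequently $f \cont{p} g = 0$ almost everywhere if and only if $F_p^* G_p = 0$, that is, if and only if the closed subspaces $\overline{\operatorname{ran} F_p}$ and $\overline{\operatorname{ran} G_p}$ of $H^{\otimes p}$ are orthogonal. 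In these terms the hypothesis reads $\overline{\operatorname{ran} F_1} \perp \overline{\operatorname{ran} G_1}$ in $H$, and we must deduce $\overline{\operatorname{ran} F_p} \perp \overline{\operatorname{ran} G_p}$ in $H^{\otimes p}$ for all $p$.

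The bridge between the two is the inclusion
\begin{equation*}
\overline{\operatorname{ran} F_p} \subseteq \overline{\operatorname{ran} F_1} \otimes H^{\otimes (p-1)}
\end{equation*}
inside $H^{\otimes p} \cong H \otimes H^{\otimes(p-1)}$, with the first variable singled out, and likewise for $G$. Since the orthogonal complement of $\overline{\operatorname{ran} F_1} \otimes H^{\otimes(p-1)}$ equals $(\ker F_1^*) \otimes H^{\otimes(p-1)}$, this inclusion amounts to checking that $\langle F_p \phi,\ \chi \otimes \eta \rangle_{H^{\otimes p}} = 0$ whenever $\phi \in L^2(\R_{+}^{n-p})$, $\chi \in \ker F_1^*$ and $\eta \in H^{\otimes(p-1)}$. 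Performing the integration in the first variable first, and writing $x' = (x_2, \ldots, x_p)$, this inner product becomes $\int \overline{\eta(x')}\big(\int \phi(w)\, \overline{(F_1^* \chi)(x', w)}\, dw\big)\, dx'$; since $F_1^* \chi = 0$ as an element of $L^2(\R_{+}^{n-1})$, Fubini's theorem makes the inner integral vanish for almost every $x'$, and hence the whole expression is $0$.

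Granting the inclusion, the lemma follows immediately: from $f \cont{1} g = 0$ a.e.\ we get $\overline{\operatorname{ran} F_1} \perp \overline{\operatorname{ran} G_1}$, hence $\overline{\operatorname{ran} F_1} \otimes H^{\otimes(p-1)} \perp \overline{\operatorname{ran} G_1} \otimes H^{\otimes(p-1)}$ in $H^{\otimes p}$, hence $\overline{\operatorname{ran} F_p} \perp \overline{\operatorname{ran} G_p}$, hence $f \cont{p} g = 0$ a.e.\ for every $p = 1, \ldots, n \wedge m$. The step I expect to require the most care is precisely the inclusion above: one cannot argue naively that ``$f \cont{1} g = 0$ a.e.'' forces its diagonal restrictions to vanish (the relevant diagonals are Lebesgue-null), so the detour through $\ker F_1^*$ — which promotes the almost-everywhere vanishing of $f \cont{1} g$ to a genuine orthogonality of closed subspaces that survives tensoring with $H^{\otimes(p-1)}$ — is what makes the argument go through.
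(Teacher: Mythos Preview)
Your argument is correct and takes a genuinely different route from the paper's. The paper proceeds by writing, for $p\ge 2$,
\[
(f\cont{p}g)(t)=\int_{\R_+^{p-1}} (f\cont{1}g)\big(t_1,\ldots,t_{n-p},s_p,\ldots,s_2,s_2,\ldots,s_p,t_{n-p+1},\ldots\big)\,ds_2\cdots ds_p
\]
and then invoking $f\cont{1}g=0$ a.e.\ to conclude. This is precisely the ``naive diagonal restriction'' step you warn against: the map $(t,s_2,\ldots,s_p)\mapsto(t_1,\ldots,t_{n-p},s_p,\ldots,s_2,s_2,\ldots,s_p,t_{n-p+1},\ldots)$ has range of Lebesgue measure zero in $\R_+^{n+m-2}$, so $f\cont{1}g=0$ a.e.\ does not by itself control the integrand. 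The paper's identity is formally just a rewriting of the definition of $\cont{p}$ and does not, as written, explain why the a.e.\ hypothesis propagates. Your reformulation via the Hilbert--Schmidt operators $F_p,G_p$ upgrades ``$f\cont{1}g=0$ a.e.'' to the subspace statement $\overline{\operatorname{ran}F_1}\perp\overline{\operatorname{ran}G_1}$, which is stable under tensoring with $H^{\otimes(p-1)}$; the inclusion $\overline{\operatorname{ran}F_p}\subseteq\overline{\operatorname{ran}F_1}\otimes H^{\otimes(p-1)}$ (verified by pairing against $\ker F_1^{*}\otimes H^{\otimes(p-1)}$ and using Fubini on the $L^1$ integrand $f\cdot(\phi\otimes\chi\otimes\eta)$) then transfers the orthogonality to level $p$. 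This is exactly the rigorous substitute for the paper's diagonal step.

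Two minor remarks. First, your use of mirror-symmetry is only for $f$, to turn the kernel of $F_p^{*}G_p$ into $f\cont{p}g$ up to the reversal $a\mapsto\bar a$; the paper's computation never invokes mirror-symmetry at all, which is another hint that its argument is incomplete as written. Second, in the edge case $p=n$ (so $F_n\colon\mathbb{C}\to H^{\otimes n}$, $c\mapsto c\,f$) your inclusion still holds with the same proof, so the argument covers $p=n\wedge m$ as well.
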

\begin{proof}

Observe that, for any $p = 1, \ldots, n \wedge m$, 
\begin{align*}
  &f \cont{p} g\left(t_1, \ldots , t_{n+m-2p} \right) \\
  & = \int_{\mathbb{R}_{+}^{p}}f\left(t_1,\ldots, t_{n-p}, s_p, \ldots
    ,s_1 \right) g\left( s_1, \ldots ,s_p, t_{n-p+1}, \ldots, t_{n+m-2p}\right)ds_1 \cdots ds_p \\
&= \int_{\mathbb{R}_{+}^{p-1}}\bigg(
  \int_{\mathbb{R}_{+}}f\left(t_1,\ldots, t_{n-p}, s_p, \ldots ,s_1
  \right) \\
&  \qquad\qquad\qquad\qquad\qquad\qquad\quad g\left( s_1, \ldots ,s_p, t_{n-p+1}, \ldots, t_{n+m-2p}\right)ds_1\bigg) ds_2 \cdots ds_p \\
&= \int_{\mathbb{R}_{+}^{p-1}} f \cont{1} g\left(t_1,\ldots, t_{n-p}, s_p, \ldots ,s_2,s_2,\ldots ,s_p, t_{n-p+1}, \ldots, t_{n+m-2p} \right)  ds_2 \cdots ds_p.
\end{align*}
Using the assumption that $f \cont{1} g = 0$ a.e., we get $f \cont{p} g = 0$ a.e., which concludes the proof.
\end{proof}
\begin{lemma}
\label{lemmacont1impliescontpandcontstarp}
Let $n,m$ be natural numbers and let $f \in
L^2\left(\mathbb{R}_{+}^{n} \right) $ and $g \in
L^2\left(\mathbb{R}_{+}^{m} \right) $ be mirror-symmetric functions.
Assume furthermore that $f \star_{1}^{0} g = 0$ almost
everywhere. Then, for all $p = 1, \ldots, n \wedge m$ and all $r = 2,
\ldots, n \wedge m$, it holds that $f \cont{p} g=0$ and $f \smcont{r}
g=0$ almost everywhere.
\end{lemma}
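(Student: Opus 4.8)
The plan is to localise the hypothesis in the single variable that $f$ and $g$ share inside the star contraction $f\star_{1}^{0}g$. By Definition \ref{defcontractions} one has $(f\star_{1}^{0}g)(t_1,\ldots,t_{n+m-1})=f(t_1,\ldots,t_n)\,g(t_n,\ldots,t_{n+m-1})$, the shared variable being $t_n$. Writing $f_u(\cdot)=f(\cdot,u)\in L^2(\R_+^{n-1})$ and ${}_ug(\cdot)=g(u,\cdot)\in L^2(\R_+^{m-1})$ (well defined for a.e.\ $u$ by Fubini), I would first record the elementary fact that $f\star_{1}^{0}g=0$ a.e.\ is equivalent, by Fubini applied to the set on which this product is non-zero, to the assertion that for a.e.\ $u\in\R_+$ one has $f_u=0$ a.e.\ or ${}_ug=0$ a.e.; that is, the set $A\cap B$ is Lebesgue-negligible, where $A=\{u:f_u\neq 0\ \text{a.e.}\}$ and $B=\{u:{}_ug\neq 0\ \text{a.e.}\}$.

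Next I would treat the nested contractions. For each $p=1,\ldots,n\wedge m$, relabelling the $p$ integration variables in Definition \ref{defcontractions} so that the innermost one, $u$, is the variable occupying the last slot of $f$ and the first slot of $g$ gives
\begin{align*}
(f\cont{p}g)(t_1,\ldots,t_{n+m-2p})
&=\int_{\R_+^{p-1}}\Big(\int_{\R_+}f(t_1,\ldots,t_{n-p},s_1,\ldots,s_{p-1},u)\\
&\qquad\qquad\times\,g(u,s_{p-1},\ldots,s_1,t_{n-p+1},\ldots,t_{n+m-2p})\,du\Big)\,ds_1\cdots ds_{p-1},
\end{align*}
where for $p=1$ the outer integral and the variables $s_1,\ldots,s_{p-1}$ are absent. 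For a.e.\ $u\notin A$ the factor $f(\ldots,u)=f_u(\ldots)$ vanishes a.e., and for a.e.\ $u\notin B$ the factor $g(u,\ldots)={}_ug(\ldots)$ vanishes a.e.; since $A\cap B$ is negligible, the integrand of the inner integral vanishes for a.e.\ $(u,t_1,\ldots,t_{n+m-2p},s_1,\ldots,s_{p-1})$ on the product space. Fubini then yields that the inner integral vanishes a.e., and a further integration in $s_1,\ldots,s_{p-1}$ gives $f\cont{p}g=0$ a.e. (Alternatively, the case $p=1$ is immediate, since $f\cont{1}g$ is obtained by integrating out the $n$-th variable of $f\star_{1}^{0}g$, and Lemma \ref{lemmacont1impliescontp} then propagates the vanishing to every $p$.)

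The star contractions $f\smcont{r}g$ with $r\geq 2$ are handled in exactly the same manner: since $r\geq 2$ there is at least one integrated variable, the variable $s_{r-1}$ of Definition \ref{defcontractions} again occupies the last slot of $f$ and the first slot of $g$, and hence
\begin{align*}
(f\smcont{r}g)(t_1,\ldots,t_{n+m-2r+1})
&=\int_{\R_+^{r-2}}\Big(\int_{\R_+}f(t_1,\ldots,t_{n-r+1},s_1,\ldots,s_{r-2},u)\\
&\qquad\qquad\times\,g(u,s_{r-2},\ldots,s_1,t_{n-r+1},\ldots,t_{n+m-2r+1})\,du\Big)\,ds_1\cdots ds_{r-2},
\end{align*}
with the variables $s_1,\ldots,s_{r-2}$ and the outer integral absent when $r=2$; the argument of the previous paragraph now applies verbatim and gives $f\smcont{r}g=0$ a.e.

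I expect the only genuinely delicate point to be the first paragraph---converting ``the product of two functions of overlapping variables vanishes a.e.'' into the negligibility of $A\cap B$---together with the care needed in the last two paragraphs to perform the substitutions at the level of the full product space (the freed variables $s_1,\ldots$ reappearing in both $f$ and $g$) rather than by naively restricting an a.e.-zero function to a lower-dimensional subset. Everything else is routine bookkeeping with Fubini's theorem; in particular, mirror-symmetry of $f$ and $g$ plays no role here, only membership in $L^2$.
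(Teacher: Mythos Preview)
Your proof is correct and follows the same underlying idea as the paper: isolate the variable shared between $f$ and $g$ in $f\star_{1}^{0}g$ and observe that every higher contraction $f\cont{p}g$ or $f\smcont{r}g$ is an integral of the product $f(\ldots,u)\,g(u,\ldots)$ over that variable (plus some extra ones).

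The one difference is presentational but worth noting. The paper simply rewrites the integrand as $f\star_{1}^{0}g$ evaluated at a point with repeated coordinates, namely
\[
f\cont{p}g(t)=\int_{\R_+^{p}} (f\star_{1}^{0}g)(t_1,\ldots,t_{n-p},s_p,\ldots,s_1,s_2,\ldots,s_p,t_{n-p+1},\ldots)\,ds,
\]
and then invokes $f\star_{1}^{0}g=0$ a.e.\ directly. This is exactly the ``restriction to a lower-dimensional diagonal'' you flagged as delicate, and indeed it is not literally valid for an arbitrary a.e.-zero function; it works here only because of the product structure of $f\star_{1}^{0}g$. Your $A\cap B$ decomposition (for a.e.\ $u$ either $f_u=0$ a.e.\ or ${}_ug=0$ a.e.) is precisely the argument that makes the paper's step rigorous. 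So the two proofs coincide in substance; yours spells out the measure-theoretic justification that the paper leaves implicit, and your observation that mirror-symmetry plays no role is also correct.
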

\begin{proof}

Observe that, for any $p = 1, \ldots, n \wedge m$,
\begin{align*}
  & f \cont{p} g\left(t_1, \ldots , t_{n+m-2p} \right) \\
  & = \int_{\mathbb{R}_{+}^{p}}f\left(t_1,\ldots, t_{n-p}, s_{p}, \ldots ,s_1 \right) g\left( s_1, \ldots ,s_{p}, t_{n-p+1}, \ldots, t_{n+m-2p}\right)ds_1 \cdots ds_{p}\\
&= \int_{\mathbb{R}_{+}^{p}}f \star_{1}^{0}g\left(t_1,\ldots, t_{n-p}, s_{p}, \ldots ,s_1, s_2, \ldots ,s_{p}, t_{n-p+1}, \ldots, t_{n+m-2p}\right)ds_1 \cdots ds_{p}.
\end{align*}
Similarly, it holds that, for any $r = 2, \ldots, n \wedge m$, 
\begin{align*}
  & f \smcont{r} g\left(t_1, \ldots , t_{n+m-2r+1} \right) \\
  & = \int_{\mathbb{R}_{+}^{r-1}}f\left(t_1,\ldots, t_{n-r+1},
    s_{r-1}, \ldots ,s_1 \right) \\
  & \qquad\qquad\qquad\qquad\qquad\qquad g\left( s_1, \ldots ,s_{r-1}, t_{n-r+1}, \ldots, t_{n+m-2r+1}\right)ds_1 \cdots ds_{r-1}\\
& = \int_{\mathbb{R}_{+}^{r-1}}f \star_{1}^{0}g\left(t_1,\ldots,
  t_{n-r+1}, s_{r-1}, \ldots ,s_1,s_2, \ldots ,s_{r-1}, \right. \\
& \left. \qquad\qquad\qquad\qquad\qquad\qquad \qquad\qquad\qquad
  t_{n-r+1}, \ldots, t_{n+m-2r+1}\right)ds_1 \cdots ds_{r-1}.
\end{align*}
Using the assumption that $f \star_{1}^{0}g = 0$ a.e. concludes the proof.
\end{proof}

\begin{acknow*}
The authors wish to thank an anonymous referee for a careful
reading of the manuscript as well as for valuable suggestions and remarks.
\end{acknow*}

\bibliographystyle{acm}
\bibliography{biblio}

\begin{thebibliography}{10}

\bibitem{arizmendi_convergence_2014}
{\sc Arizmendi, O., and Jaramillo, A.}
\newblock Convergence of the fourth moment and infinite divisibility:
  quantitative estimates.
\newblock {\em Electron. Commun. Probab. 19\/} (2014).

\bibitem{biane_stochastic_1998}
{\sc Biane, P., and Speicher, R.}
\newblock Stochastic calculus with respect to free {Brownian} motion and
  analysis on {Wigner} space.
\newblock {\em Probab. Theory Related Fields 112}, 3 (1998), 373--409.

\bibitem{bourguin_poisson_2015}
{\sc Bourguin, S.}
\newblock Poisson convergence on the free {Poisson} algebra.
\newblock {\em Bernoulli 21}, 4 (2015), 2139--2156.

\bibitem{bourguin_vector-valued_2016}
{\sc Bourguin, S.}
\newblock Vector-valued semicircular limits on the free {Poisson} chaos.
\newblock {\em Electron. Commun. Probab. 21}, 55 (2016), 1--11.

\bibitem{bourguin_free_2017}
{\sc Bourguin, S., and Campese, S.}
\newblock Free quantitative fourth moment theorems on {Wigner} space.
\newblock {\em arXiv:1701.05414 [math]\/} (Jan. 2017).
\newblock arXiv: 1701.05414.

\bibitem{bourguin_semicircular_2014}
{\sc Bourguin, S., and Peccati, G.}
\newblock Semicircular limits on the free {Poisson} chaos: counterexamples to a
  transfer principle.
\newblock {\em J. Funct. Anal. 267}, 4 (2014), 963--997.

\bibitem{deya_convergence_2012}
{\sc Deya, A., and Nourdin, I.}
\newblock Convergence of {Wigner} integrals to the tetilla law.
\newblock {\em ALEA Lat. Am. J. Probab. Math. Stat. 9\/} (2012), 101--127.

\bibitem{hiai_semicircle_2000}
{\sc Hiai, F., and Petz, D.}
\newblock {\em The semicircle law, free random variables and entropy}, vol.~77
  of {\em Mathematical {Surveys} and {Monographs}}.
\newblock American Mathematical Society, Providence, RI, 2000.

\bibitem{kemp_wigner_2012}
{\sc Kemp, T., Nourdin, I., Peccati, G., and Speicher, R.}
\newblock Wigner chaos and the fourth moment.
\newblock {\em Ann. Probab. 40}, 4 (2012), 1577--1635.

\bibitem{nica_lectures_2006}
{\sc Nica, A., and Speicher, R.}
\newblock {\em Lectures on the combinatorics of free probability}, vol.~335 of
  {\em London {Mathematical} {Society} {Lecture} {Note} {Series}}.
\newblock Cambridge University Press, Cambridge, 2006.

\bibitem{nourdin_strong_2016}
{\sc Nourdin, I., Nualart, D., and Peccati, G.}
\newblock Strong asymptotic independence on {Wiener} chaos.
\newblock {\em Proc. Amer. Math. Soc. 144}, 2 (2016), 875--886.

\bibitem{nourdin_poisson_2013}
{\sc Nourdin, I., and Peccati, G.}
\newblock Poisson approximations on the free {Wigner} chaos.
\newblock {\em Ann. Probab. 41}, 4 (2013), 2709--2723.

\bibitem{nourdin_multi-dimensional_2013}
{\sc Nourdin, I., Peccati, G., and Speicher, R.}
\newblock Multi-dimensional {Semicircular} {Limits} on the {Free} {Wigner}
  {Chaos}.
\newblock In {\em Seminar on {Stochastic} {Analysis}, {Random} {Fields} and
  {Applications} {VII}}, R.~C. Dalang, M.~Dozzi, and F.~Russo, Eds., no.~67 in
  Progress in {Probability}. Springer Basel, Jan. 2013, pp.~211--221.

\bibitem{nourdin_convergence_2012}
{\sc Nourdin, I., and Poly, G.}
\newblock Convergence in law in the second {Wiener}/{Wigner} chaos.
\newblock {\em Electron. Commun. Probab. 17\/} (2012), no. 36, 12.

\bibitem{nourdin_asymptotic_2014}
{\sc Nourdin, I., and Rosi{\'n}ski, J.}
\newblock Asymptotic independence of multiple {Wiener}-{It{\^o}} integrals and
  the resulting limit laws.
\newblock {\em Ann. Probab. 42}, 2 (2014), 497--526.

\bibitem{nualart_central_2005}
{\sc Nualart, D., and Peccati, G.}
\newblock Central limit theorems for sequences of multiple stochastic
  integrals.
\newblock {\em Ann. Probab. 33}, 1 (2005), 177--193.

\bibitem{peccati_gaussian_2005}
{\sc Peccati, G., and Tudor, C.~A.}
\newblock Gaussian limits for vector-valued multiple stochastic integrals.
\newblock In {\em S{\'e}minaire de {Probabilit{\'e}s} {XXXVIII}}, vol.~1857 of
  {\em Lecture {Notes} in {Math}.} Springer, Berlin, 2005, pp.~247--262.

\bibitem{rosinski_product_1999}
{\sc Rosi{\'n}ski, J., and Samorodnitsky, G.}
\newblock Product formula, tails and independence of multiple stable integrals.
\newblock In {\em Advances in stochastic inequalities ({Atlanta}, {GA}, 1997)},
  vol.~234 of {\em Contemp. {Math}.} Amer. Math. Soc., Providence, RI, 1999,
  pp.~169--194.

\bibitem{tao_topics_2012}
{\sc Tao, T.}
\newblock {\em Topics in random matrix theory}, vol.~132 of {\em Graduate
  {Studies} in {Mathematics}}.
\newblock American Mathematical Society, Providence, RI, 2012.

\bibitem{ustunel_independence_1989}
{\sc Ustunel, A.~S., and Zakai, M.}
\newblock On {Independence} and {Conditioning} {On} {Wiener} {Space}.
\newblock {\em Ann. Probab. 17}, 4 (Oct. 1989), 1441--1453.

\bibitem{voiculescu_free_1992}
{\sc Voiculescu, D.~V., Dykema, K.~J., and Nica, A.}
\newblock {\em Free random variables}, vol.~1 of {\em {CRM} {Monograph}
  {Series}}.
\newblock American Mathematical Society, Providence, RI, 1992.
\newblock A noncommutative probability approach to free products with
  applications to random matrices, operator algebras and harmonic analysis on
  free groups.

\end{thebibliography}

\end{document}